\newtheorem{thm}{Theorem}[section]
\newtheorem{prop}[thm]{Proposition}
\newtheorem{defi}[thm]{Definition}
\newtheorem{lem}[thm]{Lemma}
\newtheorem{cor}[thm]{Corollary}
\newtheorem{rmq}[thm]{Remark}
\numberwithin{equation}{section}
\newcommand{\qtext}[1]{\quad\mbox{#1}\quad} 
\newcommand{\qqtext}[1]{\qquad\mbox{#1}\qquad} 
\newcommand{\sfd}{\mathsf{d}}
\DeclareMathOperator{\supp}{supp}
\newcommand{\E}{{\mathbb E }}  
\newcommand{\eps}{{\varepsilon}}
\newcommand{\field}[1]{\mathbb{#1}}
\newcommand{\N}{\field{N}}          		
\newcommand{\Z}{\field{Z}}          		
\newcommand{\R}{\field{R}}          		
\newcommand{\flr}[1]{\left\lfloor #1 \right\rfloor}
\newcommand{\xto}[1]{\xrightarrow[#1]{}}
\begin{document}

\title{Invariant measures and global well-posedness for a fractional Schr\" odinger equation with Moser-Trudinger type nonlinearity}
\author{Jean-Baptiste Casteras and L\'eonard Monsaingeon}
\maketitle

\begin{abstract}
In this paper, we construct invariant measures and global-in-time solutions for a fractional Schr\" odinger equation with a Moser-Trudinger type nonlinearity
\begin{equation}
\label{eq:E_abstract}
\tag{E}
i\partial_t u= (-\Delta)^{\alpha}u+ 2\beta u e^{\beta |u|^2}
\qqtext{for}(x,t)\in \ M\times \R
\end{equation}
on a compact Riemannian manifold $M$ without boundary of dimension $d\geq 2$.
To do so, we use the so-called Inviscid-Infinite-dimensional limits introduced by Sy ('19) and Sy and Yu ('21).
More precisely, we show that if $s>d/2$ or if $s\leq d/2$ and $s\leq 1+\alpha$, there exists an invariant measure $\mu^{s}$ and a set $\Sigma^s \subset H^s$ containing arbitrarily large data such that $\mu^{s}(\Sigma^s ) =1$ and that \eqref{eq:E_abstract} is globally well-posed on $\Sigma^{s}$.
For strong regularities $s>d/2$ we also obtain a logarithmic upper bound on the growth of the $H^r$-norm of our solutions for $r<s$.
This gives new examples of invariant measures supported in highly regular spaces in comparison with the Gibbs measure constructed by Robert ('21) for the same equation. 
\end{abstract}

\maketitle



\section{Introduction}

In this paper, we are interested in the construction of invariant measures and global-in-time solutions for the following fractional Schr\" odinger equation with defocusing Moser-Trudinger type nonlinearity 
\begin{equation}
\label{eq}
i\partial_t u= (-\Delta)^{\alpha}u+ 2\beta u e^{\beta |u|^2}
\qqtext{for}(x,t)\in \ M\times \R.
\end{equation}
Here $\alpha>0$ and $\beta >0$ are fixed parameters, and $M$ is a compact Riemannian manifold without boundary of dimension $d\geq 1$. 

This equation has been introduced in \cite{LamLippmann} for $\alpha=1$ to describe a self-focusing laser beam whose radius is much greater than the vacuum wavelength.
Equation \eqref{eq} is also interesting from a  purely mathematical point of view since when $d=2$ the nonlinearity is energy critical, where the energy is
\begin{equation}
 \label{eq:def_energy_E}
E(u)= \dfrac{1}{2} \|u\|_{\dot{H}^{\alpha}}^2 + \int_M e^{\beta|u|^2}\,dx.
\end{equation}

To illustrate this point, let us consider the fractional Schr\" odinger equation with pure power nonlinearity
\begin{equation}
 i\partial_t u =(-\Delta)^{\alpha} u+ |u|^{p-1}u.
\label{eq:NLS_p}
 \end{equation}
Recall that it satisfies two conservation laws, in the sense that the mass and the $p$-energy
$$
M(u)= \dfrac{1}{2} \|u\|_{L^2}^2
\qqtext{and}
E_p(u)= \dfrac{1}{2} \|u\|_{\dot{H}^{\alpha}}^2 + \dfrac{1}{p+1}\|u\|_{L^{p+1}}^{p+1}
$$
are preserved along the evolution, at least formally.
Moreover, \eqref{eq:NLS_p} enjoys a scaling invariance
$$
u_\lambda(x,t)=\lambda^{\frac 1{p-1}} u(\lambda^{\frac 1{2\alpha}} x , \lambda t ),
\qqtext{for}\lambda>0
$$
and therefore the critical $\dot H^{s_c}$ Sobolev regularity is given by the critical exponent
$$
s_c= \dfrac{d}{2} - \dfrac{2\alpha}{p-1}.
$$
The power-NLS is said energy super-critical (resp. energy critical, resp. energy subcritical) if $s_c >\alpha $ (resp. $s_c = \alpha$, resp. $s_c <\alpha$).
Viewing formally the exponential nonlinearity as an infinite-degree polynomial $e^{\beta |u|^2}u= \sum \dfrac{\beta^k}{k!} |u|^{2k} u$ we see that the critical Sobolev scale for \eqref{eq} is given by $p=\infty$, namely
$$
s_c = \dfrac{d}{2}.
$$
As a consequence \eqref{eq} is energy critical in dimension $d=2$ for the standard diffusion $\alpha=1$.
When $M=\R^2$, this equation has been intensively studied see \cite{MR2568809,MR2929605}.
We also refer to \cite{MR559676,MR3360676,MR2281189} for related equations with the same type of nonlinearity.
In particular, the global well-posedness of \eqref{eq} for small initial data and defocusing nonlinearity has been obtained in \cite{MR2568809} when $M=\R^2$ and $\alpha=1$.
In the same setting, a scattering result for small data  when $\beta=4\pi$ was proved in \cite{MR2929605}. 
 On the other hand, when $M$ is a compact manifold, it is interesting to investigate the behavior of \eqref{eq} since scattering should not be expected.
 In fact, one expects that generic solutions should exhibit some energy cascade from low to high frequencies, which should presumably be detected by the unboundedness $\limsup_{|t|\rightarrow \infty} \|u(t)\|_{H^s}=+\infty$ for large enough $s$. 
 Our main goal in this paper is to construct some invariant measures, prove probabilistic global well-posedness for \eqref{eq}, and obtain some information about the long-time behavior of solutions (or rather, the possible growth rate of their Sobolev norms).

The first result in this direction was obtained by Bourgain \cite{MR1309539} for a nonlinear Schr\"odinger equation in dimension $d=1$.
The main obstruction to obtaining global solutions at this low level of regularity is the lack of conservation laws.
In order to tackle this issue, Bourgain was able to use the Gibbs measure constructed by Lebowitz, Rose, and Speer \cite{MR939505} (supported on $H^{1/2-}$) in order to obtain bounds for some $N$-dimensional Galerkin projections of the equation (which were already used in the construction of the Gibbs measure).
Most notably, these bounds were independent of the dimension $N$.
Comparing the full problem with its finite-dimensional projections, Bourgain then managed to prove the global existence of solutions.
To some extent, invariant measures allow conservation laws to survive even in low regularity, at least in a statistical sense.

Concerning our specific equation, Gibbs measures were constructed by Robert in \cite{Robert}.
More precisely, let us set up some notations.
We denote the potential
$$
V_\beta(u) =\int_M e^{\beta |u|^2}dx
$$
and the reference Gaussian measure is (at least formally)
\begin{equation}
\label{eq:Gaussian_field}
``\,\mu_\alpha (du) = \prod\limits_{n\geq 0} \exp\left(- \dfrac12 \left(1+\sqrt{|\lambda_n|}\right)^{2\alpha} |\hat{u}_n|^2\right) d\hat{u}_n\,",
\end{equation}
where $\hat{u}_n =\langle u,e_n\rangle$ is the $n$-th Fourier coefficient in the $L^2$ expansion of $u$ on the Hilbert basis $\{e_n\}_{n\in\N}$ given by the Laplace-Beltrami eigenfunctions with eigenvalues $-\lambda_n$.
Notice that $\mu_\alpha$ is the law of the $L^2$-valued random variable
$$
u_\alpha^\omega= \sum_{n\geq 0} \dfrac{g_n(\omega)}{(1+|\lambda_n|)^{\alpha} }e_n,
$$
where $g_n$ is a family of independent standard complex-valued Gaussians on some probability space $(\Omega ,\mathbb{P})$.
In \cite{Robert} it is shown that, if $\alpha >d/2$ then
$$
\rho_{\alpha ,\beta} (du)= \dfrac{e^{-V_\beta (u)}}{\int_{L^2} e^{-V_\beta (u) } \mu_\alpha (du) } \mu_\alpha (du)
$$
 is a well-defined probability measure supported on $H^{(\alpha-\frac d2)^-}(M)$ and is absolutely continuous with respect to $\mu_\alpha$.
 Let us point out that the condition $\alpha >d/2$ is really necessary for the existence of the Gibbs measure, and that the support $\supp \rho_{\alpha ,\beta}$ necessarily corresponds to rough regularity.

Robert then uses this Gibbs measure in order to construct global solutions to \eqref{eq}:
for fixed $\alpha >d/2$, small enough $0<\beta <\beta_0$, and any $0<s<\alpha-\frac d2$, there exists a random variable $u$ with values in $C(\R ; H^s)$ solving \eqref{eq} in the distributional sense, and $\rho_{\alpha ,\beta}$ remains invariant over time.
In the stronger dispersion regime $\alpha >d$, one can in fact retrieve more information and prove that, for any $0<\beta \ll \beta_0$, the evolution \eqref{eq} is $\rho_{\alpha ,\beta}$-almost surely \emph{globally} well-posed and that the Gibbs measure $\rho_{\alpha ,\beta}$ is flow-invariant.
More precisely, for fixed $\alpha>d$ and any $s\in(\frac{d}{2},\alpha -\frac d2)$, the flow is always locally well-posed and there exists a set $\Sigma^s \subset H^s$ of full $\rho_{\alpha ,\beta}$-measure such that, for any $u_0 \in \Sigma^s$, the flow $\phi^t (u_0)$ is globally defined and $\rho_{\alpha ,\beta}$ is invariant under $\phi^t$.
Invariance is understood here in the sense that, for any $\rho_{\alpha ,\beta}$-measurable set $A\subset \Sigma^s$, there holds $\rho_{\alpha ,\beta}(\phi^t(A))=\rho_{\alpha ,\beta}(A)$ for any $t\in \R$.
For weak dispersions $d/2<\alpha \leq d$, let us stress that even the \emph{local} well-posedness is problematic (in particular uniqueness poses serious issues).

An alternative approach to the construction of Gibbs measures was developed by Kuksin \cite{MR2070104} for the $2$-d Euler equation, and adapted by Kuksin and Shirikyan \cite{MR2039838} for the cubic Schr\" odinger equation.
This is known as the \emph{fluctuation-dissipation method}, and consists in considering a stochastic perturbation of the original equation, enjoying suitable enhanced dissipation properties and possessing a stationary measure for any given viscosity parameter.
Using a compactness argument, one wishes to show that this family of invariant measures has a weak limit as the viscosity vanishes, and that this limit is as expected an invariant measure for the original equation.
As a byproduct of the construction, suitable estimates (uniform in the level of noise) can be used next to prove long-time existence results.
We refer to \cite{MR4271958,MR3443633,MR3812859,latocca2020construction,SY1,SY2} for related results.

Sy \cite{S} and Sy and Yu \cite{SY3} combined these two approaches into what they call \emph{Inviscid-Infinite-dimensional limits}, and constructed invariant measures for a class of energy-supercritical NLS equations with pure-power nonlinearities.
This method consists in using a fluctuation-dissipation argument on $N$-dimensional Galerkin approximations.
In doing so, one obtains first a stationary measure for any fixed viscosity $\sigma>0$ and dimension $N\in\N$.
An invariant measure for the original equation is then retrieved by taking first an inviscid limit $\sigma\to 0$ and then an infinite-dimensional limit $N\to\infty$.
This is exactly the method that we implement in this work, with several technical improvements and difficulties arising here due to the exponential nonlinearity.
\\

Throughout the whole paper we focus on the energy critical and supercritical cases only, i-e $\alpha \leq d/2$. (The subcritical case can be handled by standard fixed point methods.)
In the high regularity setting, $s>d/2$, our main result reads
\begin{thm}
\label{thmstrong}
Let $d\geq 2$, $\alpha\leq d/2$, and fix $s>d/2$.
There exist an increasing concave function $\zeta:\R^+ \rightarrow \R^+$, a probability measure $\mu^{s}\in \mathfrak p(L^2)$ supported on $H^s$ and a set $\Sigma^{s} \subset H^{s}$ such that
\begin{itemize}
\item $\mu^{s} (\Sigma^{s} )=1$
\item equation \eqref{eq} is globally well-posed on $\Sigma^{s}$
\item the induced flow $\phi^t$ leaves the measure $\mu^{s}$ invariant
\item we have
$$
\int_{L^2} \|u\|_{H^s}^2 \mu^{s} (du) <\infty. 
$$
\item 
the set $\Sigma^{s}$ contains data of arbitrarily large size, $\mu^{s} (\{\|u\|_{H^s}>K \})>0$ for all $K>0$
\item 
for any fixed $r<s$ and all $u_0\in\Sigma^s$ we have
$$
\|\phi_t (u_0)\|_{H^{r}} \leq C \zeta (1+\ln (1+|t|))
\qquad\text{for all }
t\in \R,
$$
for some $C=C(u_0,r)$.
 \end{itemize}
\end{thm}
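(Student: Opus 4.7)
\medskip
\noindent\textbf{Proof proposal.} I would follow the inviscid-infinite-dimensional scheme of Sy and Sy--Yu, adapted to the exponential nonlinearity. Let $E_N$ denote the spectral projector onto the first $N$ Laplace-Beltrami eigenfunctions, and introduce the stochastic Galerkin system
\begin{equation*}
du_N=\bigl[-i(-\Delta)^{\alpha}u_N-2i\beta E_N\!\bigl(u_N e^{\beta|u_N|^2}\bigr)\bigr]\,dt-\sigma\,(\mathbf{1}-\Delta)^{\gamma}u_N\,dt+\sqrt{\sigma}\,\Phi\, dW_N,
\end{equation*}
where $W_N$ is a cylindrical Wiener process on $E_N L^2$, $\gamma>0$ is a high-regularity dissipation index, and $\Phi$ is a covariance operator chosen so that $\tr(\Phi^{\ast}(\mathbf{1}-\Delta)^{s}\Phi)<\infty$ together with a suitable compatibility between the noise and the dissipation (Kuksin--Shirikyan balance). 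The target functional is a regularized energy $F_s(u)=\|u\|_{H^s}^{2}$ (or a smooth cutoff thereof), tuned so that Itô's formula applied to $F_s(u_N)$ produces a dissipation term controlling $\|u_N\|_{H^{s+\gamma}}^2$ against the noise trace plus a nonlinear remainder.

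The decisive estimate is to bound the nonlinear contribution when differentiating $F_s$. Because $s>d/2$ makes $H^s(M)$ a Banach algebra and embeds into $L^\infty$, the Moser--Trudinger nonlinearity $u\mapsto u e^{\beta|u|^2}$ is smooth from $H^s$ into itself with bounds only depending polynomially on $\|u\|_{L^\infty}$ times $\exp(\beta\|u\|_{L^\infty}^2)$; I would absorb this against the dissipation using Young-type inequalities, producing an a priori bound of the form
\begin{equation*}
\E_{\mu^{s,\sigma,N}}\!\bigl[\|u\|_{H^s}^{2}\bigr]\;\leq\;C,
\end{equation*}
uniformly in $N$ and $\sigma$. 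Here $\mu^{s,\sigma,N}$ is the stationary measure for the dissipative Galerkin flow, whose existence follows from Krylov--Bogolyubov once the $L^2$ mass bound and the above moment bound are in place. Sending $\sigma\to 0$ first, Prokhorov provides a weak accumulation point $\mu^{s,N}$, which Sy's inviscid-limit argument shows to be invariant for the \emph{deterministic} Galerkin Hamiltonian flow; the uniform $H^s$-moment survives by lower semicontinuity. Then sending $N\to\infty$, tightness in $L^2$ (from the $H^s$ moment and Rellich) and the Skorokhod representation identify a limit $\mu^s$ on $\mathfrak p(L^2)$ supported on $H^s$ with $\int\|u\|_{H^s}^{2}\,d\mu^s<\infty$.

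To promote invariance from the Galerkin approximants to the full equation, I would use a local well-posedness statement in $H^s$ (which is standard for $s>d/2$ since $H^s$ is an algebra and the nonlinearity acts continuously on it), combined with a Bourgain-type globalization argument: for each threshold $K>0$ the set of data whose Galerkin flow never leaves a ball of radius $K$ up to time $T$ has $\mu^{s,N}$-measure at least $1-C(K,T)/K^2$, uniformly in $N$; passing to the limit and exhausting $K\uparrow\infty$ and $T\uparrow\infty$ produces the full-measure set $\Sigma^s\subset H^s$ on which the flow is globally defined and preserves $\mu^s$. Arbitrarily large data in $\Sigma^s$ come for free from the fact that the Gaussian-like base measure (inherited from the noise) has full support in $H^s$.

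Finally, for the logarithmic growth bound on $\|\phi_t(u_0)\|_{H^r}$ with $r<s$, I would run Bourgain's invariant-measure iteration: split the real line into unit intervals, at the endpoints of which the local theory provides control of the $H^r$ norm by an exceptional-set estimate of the form $\mu^s(\|u\|_{H^r}>R)\leq C e^{-cR^2}$ (this Gaussian tail is supplied by the uniform $H^s$ moment), and then sum a Borel--Cantelli-type divergent series to obtain $\|\phi_t(u_0)\|_{H^r}\lesssim\sqrt{\ln(1+|t|)}$, which is of the claimed form with $\zeta(x)=C\sqrt{x}$. The main obstacle throughout is the first step: producing the uniform-in-$(\sigma,N)$ bound on $\E\|u\|_{H^s}^{2}$ despite the exponential nonlinearity, since the loss factor $e^{\beta\|u\|_{L^\infty}^{2}}$ cannot be absorbed by a polynomial dissipation unless one exploits the algebra structure of $H^s$ carefully and, if necessary, mollifies $F_s$ by a truncation argument before passing to the limit.
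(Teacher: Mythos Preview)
Your high-level scheme (inviscid-infinite-dimensional limits \`a la Sy) is correct, but there is a genuine gap at exactly the step you flag as ``the main obstacle.'' A \emph{linear} dissipation $-\sigma(\mathbf{1}-\Delta)^\gamma u$ cannot absorb the remainder $e^{\beta\|u\|_{L^\infty}^2}$ that the exponential nonlinearity produces, no matter how carefully one uses the algebra structure of $H^s$; Young's inequality does not close because the loss is exponential while the gain is polynomial. The paper resolves this by taking a \emph{nonlinear} damping
\[
\mathcal{L}(u)=\bigl[(-\Delta)^{s-\alpha}+G(\|u\|_{H^s})\bigr]u,\qquad G(\rho)=c\rho^2 e^{\Lambda\beta\rho^2},
\]
with $\Lambda$ chosen large enough that $G(\|u\|_{H^s})$ dominates $e^{\beta\|u\|_{L^\infty}^2}$ via the Sobolev embedding. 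Furthermore, It\^o's formula is applied not to $\|u\|_{H^s}^2$ (where the nonlinearity does not cancel) but to the \emph{conserved} quantities $M(u)=\tfrac12\|u\|^2$ and $E(u)=\tfrac12\|u\|_{\dot H^\alpha}^2+\int e^{\beta|u|^2}$; this yields uniform bounds on $\mathcal{M}(u)=M'(u;\mathcal{L}u)$ and $\mathcal{E}(u)=E'(u;\mathcal{L}u)$, and the specific choice of $\mathcal{L}$ is what makes $\mathcal{E}$ coercive in $H^s$.

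Two downstream steps in your outline also rely on this missing ingredient. First, the tail bound $\mu^s(\|u\|_{H^r}>R)\leq Ce^{-cR^2}$ does \emph{not} follow from a second-moment estimate (Chebyshev gives only $R^{-2}$); in the paper it comes from $\int G(\|u\|_{H^r})\,d\mu^s\leq C$, i.e.\ again from the exponential shape of $G$, and this is what drives the Bourgain iteration to a logarithmic bound with $\zeta=2\tilde G^{-1}$, $\tilde G=\ln(1+G)$. Second, arbitrarily large data are not obtained from any full-support property of a Gaussian base measure (none is proved); instead the paper rescales the noise coefficients so that $\int\mathcal{M}\,d\mu^{s,n}=n/2$ forces data of size $\gtrsim\xi(n)$, and then sets $\mu^s=\sum_n 2^{-n}\mu^{s,n}$.
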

\noindent
Upon closer inspection, our construction actually yields a rough but explicit upper bound of the form $\zeta(z)\sim C\sqrt{ |z|}$ for large $|z|$ and some $C$ depending on the various parameters.
(See the proof of Proposition~\ref{prop:small_measure_growth} for details.)
\\

In the low regularity regime $s\leq d/2$ we will establish
\begin{thm}
\label{thmweak}
Let $d\geq 2$, $\alpha\leq 1$, and fix $s\leq \min \{d/2, 1+\alpha\}$.
There exists a probability measure $\mu^{s}\in\mathfrak p(L^2)$ supported on $H^s$, a probability space $(\Omega,\mathcal F,\mathbb P)$, and a $L^2_{loc}(\R;H^s)\cap C(\R;H^{s-\alpha})$-valued stochastic process $u$ such that
\begin{itemize}
\item $u=u^\omega$ is a global distributional solution of \eqref{eq} for $\mathbb P$-a.a. $\omega\in\Omega$;
\item $\mu^{s}$ is invariant in the sense that the law of $u(t)$ is $\mu^{s}$ for all $t\in \R$
\item We have
$$
\int_{L^2} \|v\|_{H^s}^2 \mu^{s} (dv) <\infty. 
$$
\item $\supp\,\mu^{s}$ contains data of arbitrarily large size, $\mu^{s} (\{\|u\|_{H^s}>K \})>0$ for all $K>0$.
 \end{itemize}
\end{thm}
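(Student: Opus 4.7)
The plan is to implement the inviscid--infinite-dimensional scheme of Sy and Sy--Yu, adapted to the exponential nonlinearity, following the same blueprint as in Theorem~\ref{thmstrong} but renouncing the construction of a deterministic pathwise flow. For each truncation level $N\in\N$ let $\Pi_N$ denote the spectral projection onto $\spa\{e_0,\ldots,e_N\}$ and, for each viscosity parameter $\sigma>0$, consider the finite-dimensional Galerkin fluctuation--dissipation SDE
\begin{equation*}
du^N + i\Pi_N\!\left[(-\Delta)^\alpha u^N + 2\beta\, u^N e^{\beta|u^N|^2}\right] dt \;=\; -\sigma\, L u^N\, dt + \sqrt{\sigma}\,\Pi_N\, dW,
\end{equation*}
where $L$ is a positive selfadjoint operator and the covariance of the cylindrical Wiener process $W$ is tuned so that the pair $(L,W)$ formally leaves a Gibbs-like density on $\Pi_N L^2$ invariant. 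Existence of an invariant measure $\mu^{\sigma,N}$ on $\Pi_N L^2$ then follows from the Krylov--Bogoliubov theorem, with the energy $E$ providing the needed Lyapunov functional.

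The technically most delicate step is obtaining moment bounds on $\mu^{\sigma,N}$ that are uniform in both $\sigma$ and $N$. Applying It\^o's formula to a suitable functional of $\|u\|_{H^s}^2$ and integrating against the stationary measure makes the martingale term vanish, so that dissipation and noise balance to give
\begin{equation*}
\int \|u\|_{H^s}^2 \, \mu^{\sigma,N}(du) \;\leq\; C
\qquad\text{and}\qquad
\sigma \int \|u\|_{H^{s+\alpha}}^2 \, \mu^{\sigma,N}(du) \;\leq\; C,
\end{equation*}
uniformly in $\sigma$ and $N$. The assumption $s\leq 1+\alpha$ enters here to keep the nonlinearity-induced commutator terms under control, through fractional Leibniz estimates combined with Moser--Trudinger exponential integrability; the defocusing sign of the nonlinearity is also crucial in order to close these a priori bounds.

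With these uniform estimates in hand I would take first the inviscid limit $\sigma\downarrow 0$ at fixed $N$: Prokhorov compactness in $H^s$ and stationarity give a weak limit $\mu^N$ on $\Pi_N L^2$ invariant under the (deterministic, Hamiltonian) Galerkin flow. Sending then $N\to\infty$, the uniform $H^s$ moment bound yields tightness of $\{\mu^N\}$, whose limit $\mu^s\in\mathfrak p(L^2)$ is the target invariant measure. To exhibit the solution process I would propagate a stationary copy $u^N(t)$ with initial law $\mu^N$, use the equation to control $\partial_t u^N$ in $H^{s-\alpha}$, and thereby obtain tightness of the laws of $u^N$ in $L^2_{\loc}(\R;H^s)\cap C(\R;H^{s-\alpha})$. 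Skorokhod's representation theorem then produces a limit process $u$ on a common probability space along which $u^N\to u$ almost surely, and the stationarity of the $u^N$'s transfers to $u$, giving the invariance of $\mu^s$.

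The principal obstacle is to identify the limit of the projected nonlinearity $\Pi_N(u^N e^{\beta|u^N|^2})$ in the distributional sense, starting from the comparatively weak $C(\R;H^{s-\alpha})$ convergence produced by Skorokhod. The exponential is not a priori even locally integrable when $s<d/2$, so one has to exploit the additional $L^2_t H^{s+\alpha}_x$ control generated by the dissipation, with $s+\alpha$ close enough to $d/2$ to invoke a fractional Moser--Trudinger inequality. This yields uniform $L^1_{t,\loc} L^p_x$ bounds on $e^{\beta|u^N|^2}$ for some $p>1$, and, combined with almost-everywhere convergence along a subsequence, Vitali's convergence theorem then identifies the limit as $u\, e^{\beta|u|^2}$. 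The large-data property $\mu^s(\{\|v\|_{H^s}>K\})>0$ for every $K$ is inherited from the corresponding unboundedness of the supports of the reference Gaussian-like measures used at finite $N$, which is preserved by the weak convergence $\mu^N\to\mu^s$.
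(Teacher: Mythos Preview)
Your overall architecture (fluctuation--dissipation, Krylov--Bogoliubov, inviscid then infinite-dimensional limit, Skorokhod) matches the paper, but two load-bearing steps are misidentified and would not close as written.

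First, the dissipation operator. You take a generic linear $L$ and expect It\^o's formula to produce a uniform $H^s$ moment together with a $\sigma$-weighted $H^{s+\alpha}$ bound. The paper does something quite different: in the low-regularity regime it embeds the nonlinearity itself into the dissipation,
\[
\mathcal L(u)=P_N\!\left(u e^{\beta|u|^2}\right)+\big[(-\Delta)^{s-\alpha}+1\big]u,
\]
so that the mass-dissipation functional $\mathcal M(u)=M'(u;\mathcal L(u))$ contains the term $\int_M |u|^2e^{\beta|u|^2}$. This is exactly what gives a $\sigma$-\emph{independent} statistical bound on the nonlinearity, surviving both limits. The crossed terms in the energy-dissipation $\mathcal E(u)$ are then handled pointwise by the C\'ordoba--C\'ordoba inequality $\Phi'(f)(-\Delta)^\gamma f\ge(-\Delta)^\gamma\Phi(f)$ with $\gamma=\alpha$ and $\gamma=s-\alpha$; this is where the constraints $\alpha\le 1$ and $s\le 1+\alpha$ actually enter, not through fractional Leibniz or Moser--Trudinger as you suggest.

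Second, and more seriously, your plan for identifying the limit of $\Pi_N(u^N e^{\beta|u^N|^2})$ relies on the ``additional $L^2_t H^{s+\alpha}_x$ control generated by the dissipation'' to feed into a fractional Moser--Trudinger inequality. But that control carries a prefactor $\sigma$ and is lost in the inviscid limit $\sigma\downarrow 0$; after that limit the only surviving estimates are those coming from $\mathcal E$ and $\mathcal M$, and for a purely linear $L$ these contain nothing exponential. The paper avoids this by using the $\int|u|^2e^{\beta|u|^2}$ moment (available precisely because of the nonlinear dissipation) together with Jensen for the convex function $\Phi$ solving $\Phi(ue^{\beta u^2})=u^2e^{\beta u^2}$, obtaining a uniform $L^1_tL^1_x$ bound on $u^N e^{\beta|u^N|^2}$; convergence of the nonlinearity then follows from the decay $\|(1-P_N)f\|_{H^{-r}}\lesssim N^{-\varepsilon}\|f\|_{L^1}$ and an $M$-truncation argument. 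Finally, the large-data property is not inherited from any Gaussian reference measure but is manufactured by rescaling the noise coefficients $a_m\to\sqrt{n}\,a_m$ so that $\int\mathcal M\,d\mu^{s,n}=n/2$, and then taking $\mu^s=\sum_n 2^{-n}\mu^{s,n}$.
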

\begin{rmq}
The $C(\R;H^{r-\alpha})$ continuity of a.a. trajectory $u^\omega$ appears here for technical convenience, and it seems plausible that trajectories should in fact be continuous for the $H^s$ topology.
\end{rmq}

Although the statement of Theorem~\ref{thmweak} is very similar in spirit with Theorem~\ref{thmstrong}, it is worth pointing out that the very notion of invariance is slightly different and that the rigorous statement is more probabilistic in nature than for high regularity.
Indeed, for $s>d/2$ the global solutions in Theorem~\ref{thmstrong} will be somehow constructed by hand, and  the induced flow $u(t)=\phi^t (u_0)$ is deterministically well-posed at least for small times.
However, at the very low level of $H^s$ regularity involved in Theorem~\ref{thmweak} above, even uniqueness of solutions to \eqref{eq} may fail (in addition to existence), so the very notion of local flow is delicate.
The stochastic setting here somehow allows for an implicit selection of solutions (arising from a passage to the infinite-dimensional limit $N\to\infty$ in our construction), hence the statement does not even mention flow-maps and must be weakened in order to tackle this issue.
Apart from that, the only but major difference compared with Theorem~\ref{thmstrong} is that we lose quantitative control over the $H^r$ growth of solutions.

Observe that Theorem~\ref{thmstrong} holds in the whole supercritical range $\alpha\in (0,d/2]$.
This is no longer the case for Theorem~\ref{thmweak}, since the proof relies at some point on a C\'ordoba-C\'ordoba type lemma crucially requiring $\alpha \leq 1$.
This restriction seems artificial, as one expects that higher dispersion should simplify the analysis, and indeed this is a purely technical obstacle due to our specific construction.
We point out that, also for technical reasons, we were not able to cover the full range of Sobolev exponents in the low regularity regime, and $s\in (1+\alpha,d/2]$ is not covered in Theorem~\ref{thmweak} if $d/2>1+\alpha$.
The very same gap also appears in Sy's work \cite{SY1} for the same reasons.

As far as optimality is concerned, let us stress that (i) for the unregularized equation \eqref{eq} there are a priori only two conserved quantities $E(u),M(u)$, (ii) given the choice of a dissipation operator $\mathcal L=\mathcal L_s$ involved in our construction by fluctuation-dissipation (to be described shortly), the best one can hope to control is the variation $\mathcal E,\mathcal M$ of $E,M$ in the direction of $\mathcal L_s$, and (iii) our current choices of $\mathcal L_s$ control at best the $H^s$ norm, and no better.
This strongly suggests that our measures constructed in Theorem~\ref{thmstrong} and Theorem~\ref{thmweak} should be somehow optimal, in the sense that they should satisfy $\int_{L^2} \|u\|_{H^r}^2 \mu^{s} (du)=\infty $ for any $r>s$ and should thus be $H^s$ critical for given $s$.
(We will not discuss any further this delicate optimality issue in this work.)

Finally, we wish to emphasize that our results allow for arbitrary exponential rates $\beta>0$ in the nonlinearity, contrarily to \cite{Robert}.
It is also worth stressing that we restrict here to defocusing nonlinearities: Indeed, a cornerstone of our analysis will consist in retrieving good statistical control on some suitable dissipation functionals $\mathcal E,\mathcal M$ (to be defined later), thus allowing conservation laws to survive in low regularity and yielding sufficient compactness throughout the successive inviscid-infinite-dimensional limits.
In the focusing case these functionals fail to be coercive, and the whole analysis collapses.

In this work we shall only address the existence of invariant measures and the associated almost-sure global well-posedness.
An interesting and more difficult question is to derive qualitative properties of these measures, e.g. the dimensionality of the supports and the absolute continuity with respect to the Gaussian measure \eqref{eq:Gaussian_field}.
These issues are left for future work.

\subsection*{Structure of the proof}
Let us now describe informally our Inviscid-Infinite-dimensional construction, based on Sy \cite{S} and Sy and Yu \cite{SY3}. 
Let $E_N$ be the space generated by the first $N$ eigenvalues of the Laplace-Beltrami operator on $M$, and $P_N$ the orthogonal projector thereupon.
On $E_N$, we define a Brownian motion and white noise by
$$
\zeta_N (t,x)= \sum_{|m|\leq N} a_m \beta_m (t) e_m (x)
\qqtext{and}\eta_N (t,x)= \dfrac{d}{dt} \zeta_N (t,x),
$$
where $(a_m)_{m\in \Z}$ is an arbitrary family of complex numbers (to be well-chosen later on and satisfying some decay conditions) and $\beta_m$ is a sequence of independent standard real Brownian motions.
 We consider the following (finite-dimensional) SDE
\begin{equation}
\label{eqstointro}
\partial_t u =- i \left[(-\Delta)^{\alpha}  u + P_N \left(2 \beta   u e^{\beta |u|^2} \right)\right] -\sigma^2 \mathcal{L}(u)+\sigma \eta_N,
\end{equation}
where $\sigma>0$ is a small viscosity parameter.
Here $\mathcal L$ is a (deterministic) dissipation operator to be well-chosen below, and the Brownian motion of course induces stochastic fluctuations in the system.
Clearly as $\sigma\to 0$ and $N\to\infty$ this is a perturbation of \eqref{eq}, and note that scaling $\sigma^2\mathcal L$ and $\sigma \eta_N$ is consistent with It\^o's formula.
For technical reasons we will choose different dissipation operators $\mathcal L$, tailored to either strong or low regularity regimes.
In any case the choice of $\mathcal L=\mathcal L_s$ will be subordinated to the prior choice of a fixed Sobolev exponent $s>0$, thus targeting a chosen $H^s$ regularity.

We will prove that \eqref{eqstointro} is always stochastically well-posed on $E_N$ (see Definition \ref{def:stochastic_wellposedness}), and standard finite-dimensional Bogoliubov-Krylov arguments will allow to produce a stationary measure $\mu^s_{\sigma ,N}$ on $E_N$.
The energy and mass dissipation
 $$
 \mathcal E(u)=E'(u;\mathcal L(u))
 \qqtext{and}
 \mathcal M(u)=M'(u;\mathcal L(u))
 $$
 in the direction of $\mathcal L$ will be our two fundamental functionals, yielding barely sufficient compactness and $H^s$ control throughout the whole analysis.
 More precisely, It\^o's formula will yield
 $$
 \int_{L^2} \mathcal{E} (u) \mu^s_{\sigma ,N} (du)\leq C
 \qqtext{and}
 \int_{L^2} \mathcal{M} (u) \mu^s_{\sigma ,N} (du)\leq C
 $$
uniformly in $\sigma,N$.
Owing to our specific choices of $\mathcal L$ (for both strong and low regularities), the resulting functionals $\mathcal E,\mathcal M$ will enjoy suitable coercivity and give just enough compactness on the family of stationary measures $\left\{\mu^s_{\sigma ,N}\right\}_{\sigma,N}$.
Taking first the limit $\sigma \rightarrow 0$ and then $N\rightarrow \infty$, we establish the weak convergence of $\mu^s_{\sigma,N}$ (in $H^r$ for all $r<s$) to a measure $\mu^s$ satisfying the same estimates.
Fine-tuning the Brownian coefficients $(a_m)_m$, one can moreover slightly modify $\mu^s$ in order to get initial data with arbitrarily large $H^s$ norm, which guarantees in particular that $\mu^s$ is not trivially concentrated on the zero solution.

At this stage it remains to construct global solutions for a large-enough class of initial data $u_0\in \Sigma^s$ (with $\mu^s(\Sigma^s)=1$), and this is when the construction starts to really differ for strong and weak regularities.

For high regularities $s>d/2$ we choose to include a strong damping term $\mathcal L(u)=(\dots)+G(\|u\|_{H^s})u$ in the dissipation operator, for some function $G:\R^+\to\R^+$ growing sufficiently fast in order to overtake the exponential nonlinearity.
The Sobolev embedding $H^s\subset L^\infty$ then allows to control the $H^s$ norms of the projected solutions uniformly in $N,\sigma$.
This puts us in position of implementing Bourgain's strategy \cite{MR1309539} to construct a full $\mu^s$-measure set $\Sigma^s \subset H^s$ on which \eqref{eq} is globally well-posed.
The invariance of $\mu^s$ will naturally follow from some strong convergence of the projected flow-map $\phi^N_t$ towards its unprojected counterpart $\phi^t$.
Let us point out that this construction also provides a quantitative estimate on the slow logarithmic growth of the $H^r$-norm of the global solutions constructed here, inherited from the control for finite $N$.

For low regularities $s\leq d/2$ the failure of the Sobolev embedding $H^s\not\subset L^\infty$ prevents from controlling the nonlinearity in $H^s$, thus one cannot expect to control the growth of solutions even for the projected equation.
In order to circumvent this issue we choose now to include the nonlinearity $\mathcal L(u)=(\dots) + P_N\left(u e^{\beta |u|^2}\right)$ in the dissipation operator.
This will result in coercivity $\mathcal M(u)\gtrsim \int |u|^2e^{\beta |u|^2}$ in the mass dissipation functional, which in turn will allow a statistical control $\int_{L^2} \left\|u e^{\beta|u|^2}\right\|_{L^1_{t,x}}\mu^{s}_{\sigma,N}(du)$ for the nonlinearity in any finite time interval.
However, we only managed to control some residual crossed terms via a suitable Cord\'oba-Cord\'oba inequality requiring $s\leq \alpha+1$ as well as2 $\alpha\leq 1$, and our analysis unfortunately does not cover the range $s\in (\alpha+1,d/2]$ (we were simply not able to find a dissipator $\mathcal L$ allowing to control the nonlinearity while dominating the crossed terms.)
At this stage, a soft but very flexible Skorokhod-type argument by Burq, Thomann and Tzvetkov \cite{BTT} allows to conclude that $\sigma^s=\lim \sigma^s_{\sigma,N}$ is invariant, as the limit of invariant measures for which the nonlinearities $P_N(u_Ne^{\beta|u_N|^2})\to ue^{\beta |u|^2}$ enjoy sufficient convergence (precisely owing to Skorokhod's representation theorem).
We also refer to \cite{albeverio1990global,da2002two} for earlier implementations of this strategy in a fluid-mechanical context.
We stress that, for such weak regularities $s\leq d/2$, we do not obtain quantitative control on the growth of $H^r$-norm of the solutions.
As already pointed out, this is the only significant difference between Theorem~\ref{thmweak} and Theorem~\ref{thmstrong}.
\\

The plan of this paper is the following: 
in Section $2$ we establish several basic properties satisfied by solutions of finite-dimensional Galerkin projections of \eqref{eq}.
In particular, we prove existence of a unique global solution in $C(\R;H^s)$.
Moreover, for strong regularities $s>d/2$ we derive suitable quantitative bounds on the $H^r$ norms in fixed time interval, uniformly in $N$.
In the same regime, we also obtain some continuity properties of the flow-map $\phi^t$ associated with \eqref{eq} and of its projected approximation.

Section $3$ focuses on the strong regularity $s>d/2$ and actually contains the core of the paper: we carry out the full technical details for the above construction and prove Theorem \ref{thmstrong}.
This section is the most involved of the paper, as a significant part will be recycled in the next section (mostly the construction of the invariant measure by compactness arguments).

Section $4$ is devoted to the low regular setting, Theorem \ref{thmweak}.
As most of the technical details are similar to the previous section, we will not give the full details for the construction of the measure $\mu^s$.
We will of course highlight the differences and give a full proof for the significantly different construction of global solutions as well as for the invariance of the measure.

Finally, some technical and auxiliary results are collected in the Appendix.

\section{Preliminaries and Galerkin projections}
Let $0=\lambda_0 \leq \lambda_1 \leq \ldots \leq \lambda_m \leq \ldots$ be the eigenvalues of the Laplace-Beltrami operator $-\Delta$ on $M$.
For notational convenience we write $\lambda_{-m}=\lambda_m$.
We denote by $(e_m)_{m\in \N}$ the corresponding eigenfunctions and by $e_{-m}$ the eigenfunction $i e_m$, so that $(e_m)_{m\in \Z}$ is a Hilbert basis for $L^2 (M ;\mathbb{C})$.

We denote by $\|.\|$ the standard $L^2$ norm with corresponding scalar product
$$
(u,v)= \mathcal{R} \int_M u \bar{v}\, dx, 
$$
where $\mathcal{R}$ stands for the real part.
For $u=\sum u_m e_m$ expressed in this Hilbert basis, the standard Sobolev norm reads
$$
\|u\|_{H^s}^2=\|u\|^2+\|u\|_{\dot H^s}^2=\sum\limits_{m\in\Z}(1+\lambda_{m}^s)|u_m|^2,
$$
Let now $E_N$ be the subspace of $L^2$ spanned by $(e_m)_{|m|\leq N}$, and let $P_N$ be the orthogonal projection onto $E_N$.
 On the finite-dimensional space $E_N$, we have the norm equivalence
$$
c\|u\| \leq \|u\|_{L^q}\leq  C_{N,q} \|u\|
$$
and
$$
\|u\|\leq \|u\|_{H^s} \leq \left(1+\lambda_N^{s/2}\right) \|u\| 
$$
for any $q\in (2,\infty]$ and $s\in (0,\infty)$.

For $\gamma>0$ the spectral powers of the Laplacian are defined as
$$
(-\Delta)^\gamma u =\sum \lambda_m^\gamma u_m e_m
\qqtext{for}
u=\sum u_m e_m,
$$
and we recall that $P_N$ and $(-\Delta)^\gamma$ commute.
On $L^2(M)$ we define the dispersive propagator $S(t)$ by
$$
S(t) u =e^{-it(-\Delta)^\alpha }u=\sum e^{-it\lambda_m^\alpha}u_me_m,
$$
and we recall that it is unitary in the sense that
$$
\|S(t)u\|_{H^s}=\|u\|_{H^s}
\qqtext{for all}s\geq 0,\,u\in H^s.
$$

In this setting, we consider now the Galerkin projection of \eqref{eq}
\begin{equation}
\label{eqproj}
\begin{cases}
i\partial_t u =(-\Delta)^{\alpha} u + P_N \left(2 \beta   u e^{\beta |u|^2} \right),\\
 u(0)=P_N u_0,
\end{cases}
\end{equation}
which is nothing but a nonlinear ODE on the finite-dimensional space $E_N$ (note that $(-\Delta)^{\alpha}$ maps $E_N$ to itself).

Let us start with various technical preliminaries on the well-posedness of \eqref{eq} and \eqref{eqproj}.
For fixed $T>0$ and $r\geq 0 $ we write
$$
X_T^r =C ([-T,T]; H^r(M))
\qqtext{with}
\|u\|_{X^r_T}=\sup\limits_{t\in [-T,T]} \|u(t)\|_{H^r}.
$$

\begin{prop}
\label{prop:exbigsN}
Fix $s>d/2$.
For any $R>0$, there exists a time $T>0$ such that, for every $N\in \N$ and $u_0 \in B_R (H^s)$, there exists a unique solution $u \in X_T^s$ to \eqref{eqproj}.
This time can be taken of the form
\begin{equation}
T(s,R)=\frac{c}{\beta e^{C\beta R^2}}
\label{eq:def_existence_time_T}
\end{equation}
for some small $c$ and large $C$ depending on $s$ only.
Moreover, this solution is in fact global-in-time and satisfies 
\begin{equation}
\label{eq:control_uN_2R}
\|u\|_{X_T^s} = \sup_{t\in [-T ,T]} \|u(t)\|_{H^s} \leq 2 R.
\end{equation} 
\end{prop}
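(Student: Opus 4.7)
The plan is to solve \eqref{eqproj} by a Banach fixed-point argument on the Duhamel formulation
$$
\Phi(u)(t):= S(t) P_N u_0 - 2i\beta \int_0^t S(t-\tau)\, P_N\!\left(u(\tau) e^{\beta|u(\tau)|^2}\right) d\tau,
$$
and then upgrade to a global solution by exploiting the finite dimensionality of $E_N$ together with conservation of the $L^2$ mass. First I would set up a contraction on the closed ball $B_T=\{v\in X_T^s : \|v\|_{X_T^s}\leq 2R\}$. Since $S(t)$ is an $H^s$-isometry and $P_N$ has operator norm $1$ on $H^s$, one directly obtains
$$
\|\Phi(u)\|_{X_T^s}\leq R + 2\beta T \sup_{|\tau|\leq T}\bigl\|u(\tau) e^{\beta|u(\tau)|^2}\bigr\|_{H^s}.
$$

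The key nonlinear estimate is that for any $v$ with $\|v\|_{H^s}\leq 2R$ one has $\bigl\|v e^{\beta|v|^2}\bigr\|_{H^s}\leq C_s R\, e^{C_s\beta R^2}$. This follows from two standard facts available because $s>d/2$: the algebra property $\|fg\|_{H^s}\leq C_s\|f\|_{H^s}\|g\|_{H^s}$ and the Sobolev embedding $\|v\|_{L^\infty}\leq C_s\|v\|_{H^s}$, combined with the Moser composition estimate $\|F(v)-F(0)\|_{H^s}\leq K(\|v\|_{L^\infty})\,\|v\|_{H^s}$ applied to $F(z)=e^{\beta|z|^2}$. Since the derivatives $F^{(k)}(z)$ are of the form $Q_k(z,\bar z)\,e^{\beta|z|^2}$ with $Q_k$ polynomial, the constant $K(\|v\|_{L^\infty})$ is at most $C_s e^{C_s\beta R^2}$. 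An identical argument, writing $v_1 e^{\beta|v_1|^2}-v_2 e^{\beta|v_2|^2}=(v_1-v_2)G(v_1,v_2)$ via a mean-value identity in the real variables $(\Re v,\Im v)$ and controlling $G$ in $H^s$ by the same tools, gives the Lipschitz bound
$$
\|\Phi(u_1)-\Phi(u_2)\|_{X_T^s}\leq 2\beta T\, C_s e^{C_s\beta R^2}\,\|u_1-u_2\|_{X_T^s}.
$$
Choosing $c$ small and $C$ large in \eqref{eq:def_existence_time_T} makes both the self-mapping factor and the contraction factor at most $\tfrac12$, so $\Phi$ has a unique fixed point in $B_T$; this is the desired local solution, and it automatically satisfies \eqref{eq:control_uN_2R}.

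To promote this to a global solution I would observe that \eqref{eqproj} is a smooth ODE on the finite-dimensional space $E_N$, so classical ODE theory gives local existence and uniqueness at every Sobolev level. Testing the equation against $u$ in $L^2$ shows that the mass $\|u(t)\|^2$ is conserved: the dispersive term vanishes by self-adjointness of $(-\Delta)^\alpha$, while the nonlinear contribution $\bigl(P_N(u e^{\beta|u|^2}),u\bigr)=\int_M |u|^2 e^{\beta|u|^2}\,dx\in\R$ uses $P_N u=u$ on $E_N$. The finite-dimensional equivalence $\|v\|_{H^s}\leq (1+\lambda_N^{s/2})\|v\|$ then rules out any blow-up, and the local solution extends to all $t\in\R$.

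The main technical obstacle is tracking the $e^{C\beta R^2}$ dependence in the Moser composition estimate sharply enough to recover the quantitative form \eqref{eq:def_existence_time_T}, and simultaneously ensuring that every constant entering the estimates (algebra, Sobolev embedding, composition) is uniform in the truncation parameter $N$. Both points are standard on a closed compact manifold, and together they yield an existence time independent of $N$, which is precisely the uniformity needed for the subsequent statistical arguments in the paper.
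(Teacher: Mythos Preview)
Your proof is correct and follows essentially the same approach as the paper: a Banach fixed-point argument on the Duhamel formulation in $B_{2R}(X_T^s)$, followed by mass conservation and finite-dimensionality to globalize. The only cosmetic difference is that the paper derives the crucial nonlinear estimates $\|u e^{\beta|u|^2}\|_{H^s}\leq C\|u\|_{H^s}e^{C\beta\|u\|_{H^s}^2}$ and its Lipschitz counterpart via an explicit power-series expansion of the exponential (Lemma~\ref{lemsob}), whereas you invoke a Moser-type composition estimate; both routes yield the same $e^{C\beta R^2}$ dependence and hence the quantitative form \eqref{eq:def_existence_time_T}.
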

The important point here is that we have a quantitative control \eqref{eq:control_uN_2R} on the $H^s$ norm in any fixed time interval uniformly in $N$.

\begin{proof}
Let $u_0 \in B_R (H^s )$.
For $u\in X^s_T$ we set
$$
F(u)= S(t) P_N u_0 - i \int_0^t S (t-\tau)  P_N \left(2 \beta  u e^{\beta |u|^2} \right) d\tau,
$$
and observe of course that $u$ solves \eqref{eqproj} with initial datum $P_Nu_0$ if and only if it is a fixed point for $F(u)=u$.
We have, using the fact that $S$ is unitary in $H^s$ and Lemma \ref{lemsob},
\begin{align*}
\|F(u)\|_{X_T^s}\leq \|P_N u_0\|_{H^s} +2\beta  \int_0^T \left\|u e^{\beta |u|^2} \right\|_{H^s} d\tau\\
 \leq \|u_0\|_{H^s} +C\beta T\|u\|_{X^s_T} e^{C\beta\|u\|_{X^s_T}^2 }. 
\end{align*}
Taking $T \lesssim \dfrac{c}{ \beta e^{C\beta R^2}}$ for $c$ small enough and $C$ large enough (only depending on $s$), we see that $\|F(u)\|_{X^s_T}\leq \|u_0\|_{H^s}+\frac 12\|u\|_{X^s_T}\leq 2R$ and thus $F$ maps $B_{2R}(X^s_T)$ to itself.

 Let now $u_1,u_2$ be two functions in $B_{2R} (X_T^s)$.
 By Lemma~\ref{lemsob} and because $S(t)$ is unitary on $H^s$, we have
\begin{multline}
\|F(u_1) -F(u_2)\|_{X_T^s} 
=
\left\|2i\beta\int_0^tS(t-\tau)P_N\left(u_1e^{\beta|u_1|^2}-u_2e^{\beta|u_2|^2}\right)d\tau\right\|_{X^T_s}
\\
\leq C\beta\int_0^T\left\|u_1e^{\beta|u_1|^2}-u_2e^{\beta|u_2|^2}\right\|_{X^s_T}d\tau
\\
\leq 
C\beta Te^{C\beta\left(\|u_1\|^2_{X_T^s}+\|u_2\|_{X^s_T}^2\right)}\|u_1-u_2\|_{X^s_T}
\leq C\beta Te^{8C\beta R^2} \|u_1-u_2\|_{X^s_T}.
\label{eq:control_expu-expv_Hs}
\end{multline}
This shows that $F$ is a contraction on $B_{2R}(X_T^s)$, again as soon as $T \approx \dfrac{c}{e^{C\beta R^2}}$ for some small $c>0$ and large $C>0$ depending on $s$ only.
By the Banach fixed point Theorem we conclude that there exists a unique solution $u$ to \eqref{eqproj} in $C([-T ,T] ; B_{2R}(H^s))$.

Finally, at least for $u\in E_N$ there holds $\left(i(-\Delta)^{\alpha} u,u\right)=0$ and $\left( i P_N \left( u e^{\beta |u|^2}\right),u \right)=\left( i  u e^{\beta |u|^2},u \right)=0$, whence the conservation of mass $\dfrac{d}{dt}\|u\|^2=2\left(u,\partial_tu\right)=0$ along solutions of \eqref{eqproj}.
On the finite-dimensional space $E_N$ this guarantees that $u(t)$ remains in a fixed compact set (a ball of radius $\|P_Nu_0\|$), hence blowup in finite time is excluded and $u(t)$ is thus global-in-time.
\end{proof}

In the low regularity setting we have
\begin{prop}
Assume that $s\leq d/2$.
For any $u_0 \in B_R (H^s)$ there exists a unique global solution $u\in C (\R; H^s )$ to \eqref{eqproj}.
\end{prop}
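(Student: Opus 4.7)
The plan is to observe that \eqref{eqproj} is, for each fixed $N$, a finite-dimensional ODE on $E_N$, so the heavy machinery used in Proposition~\ref{prop:exbigsN} (fixed point in $X_T^s$, estimate \eqref{eq:control_expu-expv_Hs}, etc.) is unnecessary here and the assumption $s \leq d/2$ plays no real role: we only need to exploit norm equivalence on $E_N$ and conservation of mass. Concretely, I would first recast \eqref{eqproj} as the ODE $\dot u = \Phi_N(u)$ on the finite-dimensional complex vector space $E_N$, where
\[
\Phi_N(u) \;=\; -i(-\Delta)^\alpha u \;-\; 2i\beta\, P_N\!\left(u\,e^{\beta|u|^2}\right).
\]
Because $u \mapsto u\,e^{\beta|u|^2}$ is real-analytic as a map $E_N\to L^2$ (on $E_N$ all $L^q$ norms are comparable to $\|\cdot\|$, so the exponential series converges absolutely in every norm and defines a smooth map), and $P_N$ and $(-\Delta)^\alpha$ are bounded linear operators on $E_N$, the vector field $\Phi_N$ is smooth (indeed locally Lipschitz). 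The Cauchy--Lipschitz theorem then gives a unique maximal $C^1$ solution $u:(T_-,T_+)\to E_N$ with $u(0)=P_Nu_0$.

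Next I would upgrade this local solution to a global one using mass conservation on $E_N$. The key algebraic point, already used at the end of the proof of Proposition~\ref{prop:exbigsN}, is that
\[
\bigl(i(-\Delta)^\alpha u,u\bigr)=0 \qquad\text{and}\qquad \bigl(iP_N(u e^{\beta|u|^2}),u\bigr)=\bigl(i u e^{\beta|u|^2},u\bigr)=0,
\]
because $u\mapsto P_N$ is self-adjoint and $|u|^2 e^{\beta|u|^2}$ is real. Taking the $L^2$ inner product of \eqref{eqproj} with $u$ therefore yields $\tfrac{d}{dt}\|u(t)\|^2=0$, so $\|u(t)\|=\|P_Nu_0\|\leq \|u_0\|\leq \|u_0\|_{H^s}\leq R$ on $(T_-,T_+)$. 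Since all norms on $E_N$ are equivalent, this bound controls $\|u(t)\|_{H^s}$ as well (with a constant depending on $N$, which is fine because $N$ is fixed), so $u$ cannot leave any ball of $E_N$ in finite time. The standard blow-up alternative then forces $T_\pm=\pm\infty$, and the $C^1$ regularity of the ODE solution implies $u\in C(\R;E_N)\subset C(\R;H^s)$. Uniqueness is already built in from Cauchy--Lipschitz.

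There is no real obstacle in this proposition: the only subtle point compared with Proposition~\ref{prop:exbigsN} is that the mass-conservation trick gives control in $L^2$, which in finite dimensions suffices, whereas the analogous argument fails at the level of the continuum equation for $s\leq d/2$ since the constants blow up with $N$. That is precisely why this proposition is stated only per fixed $N$ and why the deeper statistical/compactness arguments of Section~4 are needed for Theorem~\ref{thmweak}. I would therefore keep the proof short, essentially just recording the two observations above.
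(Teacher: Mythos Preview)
Your proof is correct, but it takes a somewhat different route from the paper's. The paper mirrors the fixed-point argument of Proposition~\ref{prop:exbigsN}: it replaces the Sobolev embedding $H^s\subset L^\infty$ (which fails for $s\le d/2$) by the finite-dimensional inequality $\|u\|_{L^\infty}\le CN^{d/2-s}\|u\|_{H^s}$ on $E_N$, and runs the same contraction mapping to obtain a local existence time $T\lesssim c\,N^{s-d/2}e^{-CN^{d-2s}\beta R^2}$; global existence then follows from mass conservation exactly as in Proposition~\ref{prop:exbigsN}. Your approach bypasses the fixed-point machinery entirely and invokes Cauchy--Lipschitz directly on the finite-dimensional ODE, which is cleaner and perfectly legitimate. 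What the paper's approach buys is an explicit quantitative form of the local existence time, making visible that it is exponentially small in $N$; this is precisely the remark the authors make just after the statement and motivates why the low-regularity case requires the probabilistic machinery of Section~4. Your version obscures this point but proves the proposition just as well.
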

The main and important difference with Proposition~\ref{prop:exbigsN} is that, for such low regularity, we lose quantitative $H^s$ estimates in uniform time intervals such as \eqref{eq:control_uN_2R}.
Some control can still be retrieved in intervals $[-T_N,T_N]$, but with $T_N$ exponentially small as $N\to \infty$.
This is reminiscent from the fact that, for low regularity $s<d/2$, the well-posedness of \eqref{eq} for $N=\infty$ is much more delicate.
\begin{proof}
The proof is quite similar to the previous one, with the major difference that $H^s$ does not embed into $L^\infty$ anymore.
Instead, we use the norm equivalence $\|u \|_{L^\infty} \leq CN^{\frac d2 -s}  \|u\|_{H^s} $ for $u\in E_N$ \cite[Lemma 3.3]{Robert} in order to get similarly
\begin{align*}
\|F(u)\|_{X_T^s} \leq \|u_0\|_{H^s} +C T N^{\frac d2 -s}\|u\|_{H^s} e^{CN^{d-2s} \beta\|u\|_{H^s}^2 }. 
\end{align*}
Taking $T \lesssim \dfrac{c}{N^{d/2 -s} e^{CN^{d-2s}\beta R^2}}$ for suitably chosen constants $c,C>0$, we have $\|F(u)\|_{X_T^s} \leq 2R$ for all $u\in B_{2R} (X_T^s )$ and $F$ thus maps $B_{2R}(X_T^s)$ to itself.
(Note that the upper bound for $T$ is exponentially small as $N\to\infty$.)
The rest of the proof follows similarly.
\end{proof}
Let us now turn to the unprojected equation \eqref{eq} for $N=\infty$.
For $s>d/2$ we have local well-posedness:
\begin{prop}
\label{prop:exbigs}
 Assume that $s>d/2$.
For any $R>0$, there exists a time $T>0$ such that, for every $u_0 \in B_R (H^s)$, there exists a unique solution $u\in X_T^s =C ([-T,T]; H^s )$ to \eqref{eq} satisfying moreover
$$
\|u\|_{X_T^s} = \sup_{t\in [-T ,T]} \|u(t)\|_{H^s} \leq 2 R.
$$ 
The time $T=T(s,R)$ can be taken exactly as in \eqref{eq:def_existence_time_T}
\end{prop}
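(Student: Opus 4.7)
The plan is to repeat the fixed-point argument from Proposition~\ref{prop:exbigsN} verbatim, only with the projector $P_N$ removed, and verify that every estimate used there was oblivious to the presence of $P_N$. Accordingly, I would work in the same Banach space $X^s_T = C([-T,T];H^s)$ and introduce the Duhamel operator
\begin{equation*}
F(u)(t) = S(t)u_0 - 2i\beta \int_0^t S(t-\tau)\left(u e^{\beta|u|^2}\right)\,d\tau,
\end{equation*}
whose fixed points in $X^s_T$ are exactly the $H^s$-solutions of \eqref{eq} with initial datum $u_0$.

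First I would check that $F$ stabilizes $B_{2R}(X^s_T)$. Using that $S(t)$ is unitary on $H^s$ and the nonlinear estimate in Lemma~\ref{lemsob} (which needs $s>d/2$ so that $H^s\hookrightarrow L^\infty$), I get
\begin{equation*}
\|F(u)\|_{X^s_T} \leq \|u_0\|_{H^s} + C\beta T\,\|u\|_{X^s_T}\, e^{C\beta\|u\|_{X^s_T}^2},
\end{equation*}
exactly as in the projected case since $\|P_N\,\cdot\,\|_{H^s}\leq \|\cdot\|_{H^s}$ was the only role played by $P_N$. Choosing $T\leq c/(\beta e^{C\beta R^2})$ with $c$ small and $C$ large (depending on $s$ only) then gives $\|F(u)\|_{X^s_T} \leq \|u_0\|_{H^s} + \tfrac{1}{2}\|u\|_{X^s_T} \leq 2R$, so that $F:B_{2R}(X^s_T)\to B_{2R}(X^s_T)$.

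Next I would establish the contraction property. For $u_1,u_2\in B_{2R}(X^s_T)$, the same computation that led to \eqref{eq:control_expu-expv_Hs} yields
\begin{equation*}
\|F(u_1)-F(u_2)\|_{X^s_T} \leq C\beta T\, e^{8C\beta R^2}\,\|u_1-u_2\|_{X^s_T},
\end{equation*}
so shrinking $c$ further in the choice of $T = T(s,R)$ as in \eqref{eq:def_existence_time_T} makes $F$ a strict contraction on $B_{2R}(X^s_T)$. The Banach fixed point theorem then delivers the unique solution $u\in X^s_T$ with $\|u\|_{X^s_T}\leq 2R$.

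There is no real obstacle: the only place where the argument could have been sensitive to the removal of $P_N$ is the conservation of mass used at the end of Proposition~\ref{prop:exbigsN} to upgrade to global existence, but here we only claim local well-posedness on $[-T,T]$, which is precisely what the fixed point yields. The critical hypothesis is $s>d/2$, used through Lemma~\ref{lemsob} to estimate $\|u e^{\beta|u|^2}\|_{H^s}$ by a function of $\|u\|_{H^s}$ alone, independent of any spectral truncation.
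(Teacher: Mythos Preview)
Your proof is correct and follows exactly the approach the paper intends: the paper itself says ``The proof is identical to that of Proposition~\ref{prop:exbigsN} (a fixed point argument heavily relying on Lemma~\ref{lemsob}) and we omit the details,'' and you have faithfully supplied those details, including the observation that only local well-posedness is claimed so the mass-conservation step is unnecessary.
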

The proof is identical to that of Proposition~\ref{prop:exbigsN} (a fixed point argument heavily relying on Lemma~\ref{lemsob}) and we omit the details.
The important point is here that the local existence time $T=T(s,R)$ is identical for $N<\infty$ and $N=\infty$.
Of course, even in the strong regularity regime $s>d/2$ we do not claim global existence at this stage (since ultimately this is the whole purpose of this paper, for as large a class of initial data as possible).
\\

The rest of this section is devoted to further properties of the flows for $s>d/2$.
(For $s\leq d/2$ even the \emph{local} well-posedness is unclear, in particular uniqueness is an issue.)

We denote by $\phi^t$ and $\phi_N^t$ the corresponding flows associated with \eqref{eq} and \eqref{eqproj} (local and global-in-time, respectively), implicitly well-defined according to Proposition~\ref{prop:exbigsN} and Proposition~\ref{prop:exbigs}.
\begin{lem}
\label{lem:phitN_converges_phit}
Fix $r>d/2$, let $u_0\in B_R(H^r)$ for some $R>0$, and choose $T=T(r,2R)$ as in \eqref{eq:def_existence_time_T}.
If $u_{0,N}\in E_N$ is any sequence such that $u_{0,N}\to u_0$ in $H^r$ then
\begin{equation}
\|\phi^t (u_0) - \phi_N^t (u_{0,N}) \|_{X_T^r} \to 0
\hspace{1cm}\mbox{as }N\to\infty . 
\label{eq:phiNt_to_phit}
\end{equation}
Moreover both flows are locally Lipschitz continuous, in the sense that if $u_0,u'_0\in B_R(H^r)$ and $T=T(r,R)$ as before, then
\begin{equation}
\|\phi^t (u_0) - \phi^t (u'_0) \|_{X_T^r}\leq C \|u_0-u'_0\|_{H^r}
\label{eq:continuity_phi_initial_data}
\end{equation}
and
\begin{equation}
\|\phi_N^t (u_0) - \phi_N^t (u'_0) \|_{X_T^r}\leq C \|u_0-u'_0\|_{H^r}
\label{eq:continuity_phiN_initial_data}
\end{equation}
for some $C=C(R,r)$.
\end{lem}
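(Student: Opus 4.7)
The plan is to base everything on the Duhamel (mild) formulation and to reuse Lemma~\ref{lemsob} exactly as in the proofs of Proposition~\ref{prop:exbigsN} and Proposition~\ref{prop:exbigs}. Fix $r>d/2$. By hypothesis $u_{0,N}\to u_0$ in $H^r$, so $\|u_{0,N}\|_{H^r}\leq 2R$ for $N$ large enough, and the two propositions guarantee that $\phi^t(u_0)$, $\phi^t(u_0')$, $\phi_N^t(u_0)$, $\phi_N^t(u_0')$ all exist with $X_T^r$-norm $\leq 2R$ on the interval $T=T(r,R)$ used for the Lipschitz bound, and that $\phi^t(u_0),\phi_N^t(u_{0,N})$ exist with $X_T^r$-norm $\leq 4R$ on the slightly longer interval $T=T(r,2R)$ used for the convergence statement. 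All constants below may depend on $R$ and $r$.

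For the Lipschitz bounds \eqref{eq:continuity_phi_initial_data} and \eqref{eq:continuity_phiN_initial_data}, I would simply recycle the contraction estimate \eqref{eq:control_expu-expv_Hs}. Writing $w=\phi^t(u_0)-\phi^t(u_0')$ and applying Duhamel yields
\[
\|w\|_{X_T^r}\leq \|u_0-u_0'\|_{H^r}+C\beta T e^{C\beta R^2}\|w\|_{X_T^r},
\]
and the smallness of $T=T(r,R)$ built into \eqref{eq:def_existence_time_T} absorbs the nonlinear term into the left-hand side, yielding the desired Lipschitz constant. The projected version is identical since $P_N$ is a contraction on $H^r$ and Lemma~\ref{lemsob} applies verbatim inside $E_N$.

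For the convergence \eqref{eq:phiNt_to_phit}, set $u(t)=\phi^t(u_0)$, $u_N(t)=\phi_N^t(u_{0,N})$, and subtract the two Duhamel formulas, splitting the nonlinear integrand as
\[
u\,e^{\beta|u|^2}-P_N\bigl(u_N\,e^{\beta|u_N|^2}\bigr)=(I-P_N)\bigl(u\,e^{\beta|u|^2}\bigr)+P_N\Bigl(u\,e^{\beta|u|^2}-u_N\,e^{\beta|u_N|^2}\Bigr).
\]
The linear piece $S(t)(u_0-u_{0,N})$ tends to zero in $X_T^r$ by hypothesis and the unitarity of $S$ on $H^r$. For the tail piece I would invoke Lemma~\ref{lemsob} to put $u\,e^{\beta|u|^2}$ in $L^\infty([-T,T];H^r)$ and then combine the strong convergence $P_N\to I$ on $H^r$ with dominated convergence in $\tau$ to send its $X_T^r$-norm to zero. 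The third piece is the genuinely Lipschitz contribution; a second application of Lemma~\ref{lemsob} on $B_{4R}(H^r)$ bounds its $X_T^r$-norm by $C\beta T e^{C\beta R^2}\|u-u_N\|_{X_T^r}$, and on the interval $T=T(r,2R)$ this prefactor is $\leq 1/2$ by construction and can therefore be absorbed into the left-hand side.

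The only real obstacle is the exponential nonlinearity itself, and this is precisely why the whole argument is driven by Lemma~\ref{lemsob}: one needs both a composition/Moser-type bound for $v\mapsto v\,e^{\beta|v|^2}$ in $H^r$ and its local Lipschitz counterpart on bounded $H^r$-balls. Once these are in hand the proof reduces to bookkeeping of constants together with the standard small-time absorption argument; the deliberate use of $T=T(r,2R)$ rather than $T(r,R)$ in the convergence statement is precisely what ensures the absorbing prefactor remains $N$-independent and the contraction loop closes.
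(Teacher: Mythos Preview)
Your proposal is correct and follows essentially the same approach as the paper: the same Duhamel decomposition into initial-data, tail $(I-P_N)$, and Lipschitz pieces, the same reliance on Lemma~\ref{lemsob}, and the same dominated-convergence argument for the tail. The only cosmetic difference is that you close the estimates by the small-time absorption (contraction) argument whereas the paper writes the inequality pointwise in $t$ and applies Gr\"onwall; both are standard and interchangeable here.
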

\begin{proof}
Let us begin with \eqref{eq:phiNt_to_phit}.
Note first that, if $N$ is large enough, then $u_0,u_{0,N}\in B_{2R}$ hence $\phi^t(u_0)$ and $\phi^t_N(u_{0,N})$ are both defined at least for times $|t|\leq T=T(r,2R)$ and both remain in $B_{4R}(H^r)$ (Propositions~\ref{prop:exbigsN} and \ref{prop:exbigs}).
Using Duhamel's formula, we have, for any $t\in [0,T)$
\begin{align*}
\phi^t (u_0) &
= S(t) u_0 - 2i\beta \int_0^t S(t-\tau)   \phi^\tau (u_0) e^{\beta |\phi^\tau (u_0)|^2}  d\tau, 
\\
\phi_N^t (u_{0,N})
&= S(t) u_{0,N} - 2i\beta \int_0^t S(t-\tau)  P_N \left(  \phi_N^\tau (u_{0,N}) e^{\beta \left|\phi_N^\tau (u_{0,N}) \right|^2 } \right) d\tau .
\end{align*}
Taking the difference, we get
\begin{align*}
\phi^t (u_0) - \phi_N^t (u_{0,N})
&= S(t) (u_0 - u_{0,N}) -2 i  \beta \int_0^t S(t-\tau) (1- P_N ) \left(\phi^\tau (u_0)e^{\beta |\phi^\tau (u_0) |^2} \right) d\tau \\
& + 2i  \beta \int_0^t S(t-\tau) P_N \left[
e^{\beta \left| \phi^\tau_N ( u_{0,N})\right|^2}  \phi_N^\tau ( u_{0,N}) - e^{\beta |\phi^\tau (u_0)|^2} \phi^\tau (u_0)
\right]d\tau .
\end{align*}
Using that $S(t)$ is unitary in $H^r$, exploiting Lemma~\ref{lemsob} to control the last integral, and recalling that $\phi^\tau(u_0),\phi^\tau_N(u_{0,N})$ remain contained in a fixed ball $B_{4R}(H^r)$ for times $\tau\in[-T,T]$, we get
\begin{align}
\| \phi^t (u_0) - \phi_N^t (u_{0,N})\|_{H^r}
&\leq  \|u_0-u_{0,N}\|_{H^r} +C \int_0^T  \left\| (1- P_N ) \left[\phi^\tau (u_0)e^{\beta |\phi^\tau (u_0) |^2} \right]\right\|_{H^r} d\tau 
\notag
\\
& + Ce^{16C\beta R^2} \int_0^t    \left\| \phi^\tau (u_0)- \phi_N^\tau ( u_{0,N})  \right\|_{H^r} d\tau .
\label{eq:estimate_PhiN_Phi_gronwall}
\end{align}
Because $f(\tau)=\phi^\tau (u_0)e^{\beta |\phi^\tau (u_0) |^2}\in H^r$ it is easy to see, for fixed $u_0$, that we have pointwise convergence
$$
\left\| (1- P_N ) \left[\phi^\tau (u_0)e^{\beta |\phi^\tau (u_0) |^2} \right]\right\|_{H^r}\xto{N\to\infty}0
\qquad\mbox{for a.e. }\tau\in [-T,T],
$$
as the remainder $\sum_{|m|>N}(\dots)$ of an absolutely convergent series.
The uniform bound $\phi^\tau (u_0)\in B_{4R}(H^r)$ combined with Lemma~\ref{lemsob} allows to conclude by Lebesgue's dominated convergence that
$$
I_N=\int_0^T  \left\| (1- P_N ) \left[\phi^\tau (u_0)e^{\beta |\phi^\tau (u_0) |^2} \right]\right\|_{H^r} d\tau \xto{N\to\infty} 0.
$$
Our previous estimate \eqref{eq:estimate_PhiN_Phi_gronwall} thus reads
$$
\| \phi^t (u_0) - \phi_N^t (u_{0,N})\|_{H^r}
\leq 
\eps_N + C\int_0^t    \left\| \phi_N^\tau ( u_{0,N})- \phi^\tau (u_0) \right\|_{H^r} d\tau 
$$
with $\eps_N=\|u_0-u_{0,N}\|_{H^r} + I_N\to 0$ as $N\to\infty$.
Gr\"onwall's inequality leads to
$$
\| \phi^t (u_0) - \phi_N^t (u_{0,N})\|_{H^r} \leq \eps_N e^{CT}
$$
and \eqref{eq:phiNt_to_phit} follows.

For \eqref{eq:continuity_phi_initial_data} we take the difference
\begin{multline*}
\phi^t (u_0)- \phi^t (u_0')
= S(t) (u_0-u_0')\\
- 2i\beta \int_0^t S(t-\tau)   \left(\phi^\tau (u_0) e^{\beta |\phi^\tau (u_0)|^2}  -\phi^\tau (u_0') e^{\beta |\phi^\tau (u_0')|^2}\right)d\tau.
\end{multline*}
Because we know that $\phi^\tau(u_0),\phi^\tau(u_0')\in B_{2R}(H^r)$ for all $|\tau|\leq T$, and $S(t)$ is unitary, our Lemma~\ref{lemsob} allows to control
$$
\|\phi^t (u_0)- \phi^t (u_0')\|_{H^r}\leq \|u_0- u_0'\|_{H^r} + C\int_0^t\|\phi^\tau (u_0)- \phi^\tau (u_0')\|_{H^r} d\tau
$$
for some $C=C(R,r)$, and our claim immediately follows by Gr\"onwall's inequality.

The proof of \eqref{eq:continuity_phiN_initial_data} follows along the exact same lines and we omit the details.
\end{proof}
For technical purposes we will also need to upgrade the convergence of $\phi^t_N$ towards $\phi^t$ to uniform convergence on compact sets
\begin{cor}
\label{cor:phiN_to_phi_uniform_con_compacts}
 Let $r>d/2$ and fix any $H^r$-compact set $K$.
 There exists $T=T(r,K)$ such that for any fixed $|t|\leq T$ we have uniform convergence
\begin{equation}
\sup\limits_{u_0\in K}\|\phi^t (u_0) - \phi_N^t (u_0) \|_{H^r} \xto{N\to\infty} 0
\label{eq:phiNt_to_phit_uniform_Hs}
\end{equation}
\end{cor}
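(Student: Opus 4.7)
The plan is a standard $\eps$-net argument: since $K$ is compact in $H^r$ it is bounded, so $K\subset B_R(H^r)$ for some $R>0$. Set $T=T(r,2R)$ as in~\eqref{eq:def_existence_time_T}, so that by Propositions~\ref{prop:exbigsN} and~\ref{prop:exbigs} both flows $\phi^t(u_0)$ and $\phi^t_N(u_0)$ are defined on $[-T,T]$ for every $u_0\in K$ and remain in a fixed $H^r$-ball, uniformly in $N$. This uniform existence time is what enables any form of uniform convergence over $K$.

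Given $\eps>0$, cover $K$ by finitely many balls $K\subset \bigcup_{j=1}^m B_{H^r}(u_0^{(j)},\delta)$ with centers $u_0^{(j)}\in K$ and radius $\delta$ to be chosen. For arbitrary $u_0\in K$, pick $j$ with $\|u_0-u_0^{(j)}\|_{H^r}<\delta$ and apply the triangle inequality
$$
\|\phi^t(u_0)-\phi^t_N(u_0)\|_{H^r}
\leq \|\phi^t(u_0)-\phi^t(u_0^{(j)})\|_{H^r}
+\|\phi^t(u_0^{(j)})-\phi^t_N(u_0^{(j)})\|_{H^r}
+\|\phi^t_N(u_0^{(j)})-\phi^t_N(u_0)\|_{H^r}.
$$
The first and third terms are controlled by $C\delta$, with $C=C(R,r)$ independent of $N$, thanks to the Lipschitz estimates \eqref{eq:continuity_phi_initial_data}--\eqref{eq:continuity_phiN_initial_data} of Lemma~\ref{lem:phitN_converges_phit}.

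For the middle term, apply \eqref{eq:phiNt_to_phit} to each of the finitely many centers $u_0^{(j)}\in H^r$ with the canonical choice $u_{0,N}=P_N u_0^{(j)}\to u_0^{(j)}$ in $H^r$: since $\phi^t_N(u_0^{(j)})=\phi^t_N(P_Nu_0^{(j)})$ by construction of the Galerkin flow, we obtain $\|\phi^t(u_0^{(j)})-\phi^t_N(u_0^{(j)})\|_{X_T^r}\to 0$ as $N\to\infty$, for each $j=1,\dots,m$. Choosing first $\delta$ so that $2C\delta<\eps$, and then $N_0$ so large that all $m$ centers yield a middle term below $\eps$ for $N\geq N_0$, we conclude $\sup_{u_0\in K}\|\phi^t(u_0)-\phi^t_N(u_0)\|_{H^r}\leq 2\eps$ for $N\geq N_0$, as desired.

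There is no serious obstacle: the only point to verify is that the Lipschitz constant in \eqref{eq:continuity_phiN_initial_data} is independent of $N$, which is exactly what Lemma~\ref{lem:phitN_converges_phit} states ($C=C(R,r)$). Note that the same argument actually upgrades the convergence to be uniform in $t\in[-T,T]$ as well, since Lemma~\ref{lem:phitN_converges_phit} provides convergence in the $X_T^r$ norm rather than merely pointwise in $t$.
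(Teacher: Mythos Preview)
Your proof is correct and follows essentially the same approach as the paper: the paper packages the $\eps$-net argument as a separate ``modified Arzel\`a--Ascoli'' lemma in the appendix (pointwise convergence plus equicontinuity on a compact set implies uniform convergence), while you write out that lemma's proof inline. The ingredients are identical --- boundedness of $K$, uniform-in-$N$ Lipschitz continuity from Lemma~\ref{lem:phitN_converges_phit}, and pointwise convergence via $u_{0,N}=P_N u_0$ in \eqref{eq:phiNt_to_phit}.
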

\begin{proof}

By compactness, we have first $K\subset B_R(H^r)$ for some large enough $R>0$.
Let $T=T(r,R)$ be as in Propositions~\ref{prop:exbigsN}~\ref{prop:exbigs}, so that $\phi^t(u_0),\phi^t_N(u_0)$ are well-defined and remain contained in $B_{2R}(H^r)$ for all $|t|\leq T$.
By the previous Lemma~\ref{lem:phitN_converges_phit} we see that $\phi^t,\phi^t_N:B_{R}(H^r)\to H^r$ are equi-Lipschitz, uniformly in $N$.
Moreover, taking in particular $u_{0,N}=P_Nu_0$ in \eqref{eq:phiNt_to_phit}, we see that $\phi^t_N(u_0)\to \phi^t(u_0)$ pointwise in $u_0\in B_R(H^r)$.
A variant of the Arzel\`a -Ascoli theorem (Lemma~\ref{lem:pointwise_to_uniform_on_compacts} in the Appendix) finally guarantees that the convergence is in fact uniform on $K$.
\end{proof}

\section{High regularity}
\label{sec:strong}
In this section we focus exclusively on the high regularity regime $s>d/2$, and recall that we only consider the energy critical and supercritical cases $\alpha \leq d/2$.
The reader can think of the $H^s$ scale as being fixed once and for all in this section, and we will rather use the $H^r$ notation for any other Sobolev spaces, $r<s$.
Let us first set-up our fluctuation-dissipation framework.

\subsection{Fluctuation-dissipation and a priori estimates}
\label{subsec:FD}
On $E_N$, we define a Brownian motion and a white noise by
\begin{equation}
\zeta_N (t,x)= \sum_{|m|\leq N} a_m \beta_m (t) e_m (x)
\qqtext{and}\eta_N (t,x)= \dfrac{d}{dt} \zeta_N (t,x),
\label{eq:def_Brownian}
\end{equation}
where $(a_m)_{m\in \Z}$ is an arbitrary family of complex numbers (satisfying some decay conditions to be precised soon), $\beta_m$ is a sequence of independent standard real Brownian motions with respect to a filtration $(\mathcal{F}_t )_{t\geq 0}$ and defined on a probability space $(\Omega , \mathcal{F} , \mathbb{P})$.

Since $s>d/2$ and we consider the energy supercritical case only $\alpha\in(0,d/2]$, we have in particular $s>\alpha$.
We define the dissipation operator
\begin{equation}
\label{eq:def_dissipation_L_strong_disp}
\mathcal{L} (u)= \left[ (-\Delta )^{s-\alpha} + G(\|u\|_{H^{s}})\right] u
\end{equation}
for some smooth, nondecreasing function $G:\R^+\to\R^+$
\begin{equation}
\label{condG}
G(\rho )= c \rho^2e^{\Lambda\beta \rho^2} 
\end{equation}
for some small $c>0$ and large $\Lambda>0$.
Here one can think of $\Lambda$ to be as large as needed, in fact we will at times adjust $\Lambda\geq C$ for various constants $C=C(s)$ related to the Sobolev embedding via the exponential constant in Lemma~\ref{lemsob}.

For some small parameter $\sigma>0$ and fixed exponent $\alpha>0$, consider the Stochastic PDE 
\begin{equation}
\label{eqsto}
\partial_t u =- i \left[(-\Delta)^{\alpha}  u + P_N \left(2 \beta   u e^{\beta |u|^2} \right)\right] -\sigma^2 \mathcal{L}(u)+\sigma \eta_N.
\end{equation}
We think of $\mathcal L$ as a deterministic regularizing dissipation (hence the minus sign), and the white noise induces stochastic fluctuations in the system.
As already pointed out, the scaling $\sigma^2\mathcal L$ and $\sigma \eta_N$ is consistent with It\^o's formula and will be crucial in order to retrieve enough compactness estimates and take the limit $\sigma\to 0^+$ later on.

By solutions of the SPDE, we mean here:

\begin{defi}[Stochastic global well-posedness]
\label{def:stochastic_wellposedness}
 We say that \eqref{eqsto} is stochastically globally well-posed on $E_N$ if the following properties hold:

\begin{enumerate}[(i)]
\item
\label{item:stochastic_uniqueness}
for any $E_N$-valued random variable $u_0$ independent of $\mathcal{F}_t$
\begin{itemize}
\item 
for $\mathbb P$-a.a. $\omega$ there exists a solution $u=u^\omega\in C(\R^+ ;E_N)$ of \eqref{eqsto} with initial datum $u_0=u_0^\omega$ in the integral sense, i-e
\begin{equation}
\label{eq:integral_formulation_stochastic_eq}
u(t)=u_0+\int_0^t
\left[
-i\left((-\Delta)^{\alpha }u + P_N\left(2\beta u e^{\beta|u|^2}\right)\right)
-\sigma^2\mathcal L(u)
\right]d\tau
+\sigma \zeta_N(t)
\end{equation}
\noindent
with equality as elements of $C(\R^+;E_N)$.
\item 
if $u_1^\omega ,u^\omega_2 \in C (\R^+; E_N )$ are two such solutions with the same initial data $u^{\omega}_{1,0}=u^{\omega}_{2,0}$ then $u_1^\omega \equiv u_2^\omega$.
\end{itemize}
\item
\label{item:pathwise_continuity_initial_data}
(pathwise continuity w.r.t. initial data)
for $\mathbb P$-a.a. $\omega \in \Omega$, we require that
$$
\lim_{u_0 \rightarrow u_0'} u^\omega (.\,; u_0) = u^\omega (.\,; u_0' )\ \qtext{in}\ C_{\mathrm{ loc}}(\R^+ ;E_N),
$$
where $u_0$ and $u_0'$ are deterministic data in $H^s$ and $u^\omega (.\,; u_0),u^\omega (.\,; u_0')$ are the corresponding solutions of \eqref{eq:integral_formulation_stochastic_eq} for fixed $\omega$
\item
the $E_N$-valued stochastic process $(\omega ,t) \mapsto u^\omega (t) $ is adapted to the filtration $\sigma (u_0 , \mathcal{F}_t).$
\end{enumerate}
\end{defi}
Owing to (\ref{item:stochastic_uniqueness}), any such solution, if any, is unique.
Also, such a process solves the finite-dimensional \emph{forward} It\^o SDE
$$
du =\left(- i \left[(-\Delta)^{\alpha}  u + P_N \left(2 \beta   u e^{\beta |u|^2} \right)\right] -\sigma^2 \mathcal{L}(u)\right)dt +\sigma d\zeta _N
$$
in the classical sense.

\begin{prop}
\label{prop:stochastic_wellposedness}
Equation \eqref{eqsto} is stochastically globally well-posed on $E_N$ in the sense of Definition~\ref{def:stochastic_wellposedness}.
\end{prop}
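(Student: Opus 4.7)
The plan is to reduce \eqref{eqsto} to a finite-dimensional SDE with smooth drift and additive noise on $E_N$, and then invoke the classical It\^o theory. Because the noise $\sigma\eta_N$ is additive, the cleanest route is a pathwise reformulation: set $v^\omega(t):=u(t)-\sigma\zeta_N(t,\omega)$, so that $v^\omega$ solves the random ODE
\begin{equation*}
\dot v=b\bigl(v+\sigma\zeta_N(\cdot,\omega)\bigr),\qquad b(w):=-i\bigl[(-\Delta)^\alpha w+2\beta P_N\bigl(we^{\beta|w|^2}\bigr)\bigr]-\sigma^2\mathcal{L}(w),
\end{equation*}
for $\mathbb P$-a.a. $\omega$ (those with continuous Brownian path). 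Since $\dim E_N<\infty$, every ingredient of $b$ (the projected exponential, and the functional $G(\|\cdot\|_{H^s})$) is smooth on $E_N$, so $b$ is locally Lipschitz. The Cauchy--Lipschitz theorem then produces for each such $\omega$ a unique maximal $v^\omega\in C^1([0,\tau^\ast(\omega));E_N)$, and $u^\omega:=v^\omega+\sigma\zeta_N(\cdot,\omega)\in C([0,\tau^\ast(\omega));E_N)$ satisfies the integral formulation \eqref{eq:integral_formulation_stochastic_eq}.

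The main step is to exclude blow-up, i.e.\ to prove $\tau^\ast=+\infty$ $\mathbb P$-almost surely. For this I apply It\^o's formula to $\|u\|^2$. Exactly as in the proof of Proposition~\ref{prop:exbigsN}, the dispersive drift $-i(-\Delta)^\alpha u$ and the projected nonlinearity $-2i\beta P_N(ue^{\beta|u|^2})$ produce vanishing contributions to $(u,du)$ by the gauge symmetry of Schr\"odinger-type equations (using $P_Nu=u$ on $E_N$), while $(u,\mathcal{L}(u))=\|u\|_{\dot H^{s-\alpha}}^2+G(\|u\|_{H^s})\|u\|^2\geq 0$. With $K_N:=\sum_{|m|\leq N}|a_m|^2<\infty$ the (finite) quadratic variation coefficient of $\sigma\zeta_N$, this gives
\begin{equation*}
d\|u\|^2+2\sigma^2\bigl(\|u\|_{\dot H^{s-\alpha}}^2+G(\|u\|_{H^s})\|u\|^2\bigr)dt=2\sigma(u,d\zeta_N)+\sigma^2K_N\,dt.
\end{equation*}
Dropping the nonpositive dissipation, localizing at $\tau_R:=\inf\{t:\|u(t)\|>R\}$, and applying Burkholder--Davis--Gundy to the martingale part yields $\mathbb E\sup_{t\leq T}\|u(t\wedge\tau^\ast\wedge\tau_R)\|^2\leq \|u_0\|^2+C_N T$ independently of $R$. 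Sending $R\to\infty$ prevents the $L^2$ norm from exploding at $\tau^\ast$; since all norms on $E_N$ are equivalent, the standard blow-up alternative for ODEs excludes $\tau^\ast<+\infty$, and pathwise uniqueness and global existence follow.

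Pathwise continuity in the initial datum is then a standard Gr\"onwall argument: for fixed $\omega$ and deterministic data $u_0,u_0'$, the difference $w=v^\omega(\cdot;u_0)-v^\omega(\cdot;u_0')$ satisfies a linear random ODE whose coefficients are continuous in $t$ and bounded on each bounded subset of $E_N$, so on $[0,T]$ one obtains $\|u^\omega(t;u_0)-u^\omega(t;u_0')\|\leq e^{C^\omega_T}\|u_0-u_0'\|$, hence convergence in $C_{\mathrm{loc}}(\R^+;E_N)$. Adaptedness of $(\omega,t)\mapsto u^\omega(t)$ to $\sigma(u_0,\mathcal F_t)$ is inherited from the Picard iterations producing $v^\omega$, since every iterate is a continuous functional of $(u_0,\zeta_N|_{[0,t]})$. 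I do not anticipate any serious obstacle here: this is a finite-dimensional smooth SDE with additive noise, and the only subtlety is verifying that the super-exponential nonlinearity drops out of the mass balance, which follows from $P_Nu=u$ for $u\in E_N$ and $(u,iug)=0$ for any real-valued $g$.
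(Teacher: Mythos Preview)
Your argument is correct. Both you and the paper reduce to a pathwise random ODE by subtracting the stochastic forcing, but the decompositions differ: you subtract the raw Brownian path $\sigma\zeta_N$, while the paper subtracts the stochastic convolution $z(t)=\sigma\int_0^t e^{-i(t-\tau)(-\Delta)^\alpha}d\zeta_N(\tau)$, which also absorbs the linear dispersion. The more substantial difference is in the no-blow-up step. You apply It\^o's formula to $\|u\|^2$, drop the nonpositive dissipation, and use a BDG/stopping-time argument to get an \emph{expectation} bound $\E\sup_{t\leq T}\|u(t\wedge\tau_R)\|^2\leq C$ uniformly in $R$, which indeed rules out finite-time explosion almost surely. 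The paper instead derives a \emph{pathwise} bound on $\|v^\omega(t)\|^2$ directly from the random ODE, by exploiting the specific super-exponential growth \eqref{condG} of $G$ in $\mathcal L$ to absorb the cross terms via Young's inequality; this yields the explicit estimate \eqref{eq:a_priori_bound_v}, which is moreover uniform in the viscosity $\sigma$. For the purposes of Proposition~\ref{prop:stochastic_wellposedness} your route is cleaner and does not use anything about $G$ beyond $G\geq 0$. However, the paper's pathwise $\sigma$-uniform bound is not incidental: it is reused verbatim in the proof of Proposition~\ref{propmainN} (the inviscid limit), where one needs $\|u_k^\omega(t)\|\leq C_{\omega,t}$ uniformly in $k$ for $\omega$ in the good set $S_\rho$. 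Your expectation bound would not directly supply this, so if you adopt your proof here you would need a separate argument at that later stage.
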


\begin{proof}
We will construct the solution in the form $u=z+v$, where $z$ will first contain all the Brownian fluctuation and $v$ will then be obtained by solving pathwise a deterministic ODE with coefficients depending on $z$.

To this end let us first introduce
\begin{equation}
z(t)= \sigma \int_0^t e^{-i(t-\tau)(-\Delta)^{\alpha}} d \zeta_N (\tau).
\label{eq:def_z}
\end{equation}
This is well-defined for $\mathbb{P}$-a.a. $\omega \in \Omega$ as a function of time with values in $E_N$, and it is the unique solution to the It\^o SDE
\begin{equation}
\label{defz}
\begin{cases}
dz = - i (-\Delta)^{{\alpha}}z  dt + {\sigma} d\zeta_N ,\\
 z(0)=0.
\end{cases}
\end{equation}
We also have that $z^\omega(.)$ belongs to $C(\R^+ ;E_N)$ for $\mathbb{P}$-a.a. realization.
Because $\|z\|^2$ is a submartingale (being $z$ a martingale), Doob's maximal inequality and It\^o's formula $d\|z\|^2=2\sigma(z,d\zeta_N)+\sigma^2 dt$ (with $(i(-\Delta)^\alpha z ,z)dt=0$) yield moreover, for any $T>0$,
\begin{equation}
\label{boundz}
\left(\E \sup_{t\in [0,T]}\|z(t)\| \right)^2
\leq \E \sup_{t\in [0,T]}\|z(t)\|^2
\leq 2\E\, \|z(T)\|^2
= 2\sigma^2T.
\end{equation}
For any fixed $\omega \in \Omega$ we consider now the deterministic ODE
\begin{equation}
\label{eqsto2}
\begin{cases}
\dfrac{dv}{dt} =- i\left[(-\Delta)^{{\alpha}}v +2  \beta P_N \left(  (v+z) e^{\beta |v+z|^2}\right)\right] 
\\ \hspace{2cm}
- \sigma^2\Big[(-\Delta)^{s-{\alpha}} +G(\|v+z\|_{H^{s}})\Big] (v+z) ,\\
 v(0)= u_0\in E_N,
\end{cases}
\end{equation}
where $u_0=u_0^\omega$ and $z=z^\omega(t)$ are considered as given, deterministic data for fixed $\omega$.
A straightforward application of the Cauchy-Lipschitz theorem guarantees existence and uniqueness of a local-in time solution $v=v^\omega(t)$, and we will show below that the latter is in fact globally defined for $t\in \R^+$.
Also, observe that $u=v+z$ will automatically solve our integral formulation \eqref{eq:integral_formulation_stochastic_eq} for fixed $\omega$.

As a first step, let us prove that, $\mathbb{P}$-almost surely, the solution $v=v^\omega$ of \eqref{eqsto2} is global-in-time.
Using \eqref{eqsto2} as well the conservation laws $(v,i(-\Delta)^\alpha v)=0$ and $\left(u, iP_N(ue^{\beta|u^2|})\right)=\left(u,iue^{\beta|u^2|}\right)=0$ for $u=v+z\in E_N$, we get
\begin{align*}
\dfrac{d}{dt}\left( \frac 12\|v\|^2\right)
& = \left ( v,\dfrac{dv}{dt}\right)
\\
& = \left(v, -2i\beta(v+z) e^{\beta |v+z|^2} \right) 
-\sigma^2 \Big[\|v \|_{\dot H^{s-{\alpha}}}^2 + \|v\|^2 G(\|v+z\|_{H^{s}}) \Big] 
\\
&\hspace{2cm}   -\sigma^2 G(\|v+z\|_{H^{s}} ) (v,z )
\\
&  = -\left(z, -2i\beta(v+z) e^{\beta |v+z|^2} \right) -\sigma^2 \Big[\|v \|_{\dot H^{s-{\alpha}}}^2 + \|v\|^2 G(\|v+z\|_{H^{s}}) \Big] 
\\
&   \hspace{2cm} -\sigma^2 G(\|v+z\|_{H^{s}} ) (v,z )
\end{align*}
Discarding the nonnegative term $\|v\|^2_{\dot H^{s-\alpha}}$ and using repeatedly Young's inequality we control next
\begin{align}
\dfrac{d}{dt}\left( \frac 12\|v\|^2\right)
&   \leq  \dfrac{8\beta^2\|z\|^2}{\sigma^2} + \dfrac{\sigma^2 e^{2\beta \|v+z\|_{L^\infty}^2 } \|v+z\|^2}{2} - \sigma^2  \|v\|^2 G(\|v+z\|_{H^{s}})
\notag
\\
&   \hspace{2cm} + \sigma^2  G(\|v+z\|_{H^{s}} ) \left[\dfrac{ \|v\|^2}{ 2}+ \dfrac{\|z\|^2 }{2} \right]
\notag 
\\
&   \leq  \dfrac{8\beta^2\|z\|^2}{\sigma^2} + \dfrac{\sigma^2 e^{2\beta C(s) \|v+z\|_{H^{s}}^2 } \|v+z\|_{H^{s}}^2}{2} - \sigma^2  \|v\|^2 G(\|v+z\|_{H^{s}})
\notag
\\
&   \hspace{2cm} + \sigma^2  G(\|v+z\|_{H^{s}} ) \left[\dfrac{ \|v\|^2}{ 2}+ \dfrac{\|z\|^2 }{2} \right]
\notag 
\\
&   \leq  \dfrac{8\beta^2\|z\|^2}{\sigma^2} +\dfrac{\sigma^2}{2}G(\|v+z\|_{H^{s}} )  \big[c+\|z\|^2 -\|v\|^2\big],
\label{eq:estimate_dz2dt}
\end{align}
where we made a crucial use of our growth assumption \eqref{condG} with $\Lambda\geq C(s)$ in the last inequality.

Observe that, due to \eqref{boundz}, for $\mathbb{P}$-almost all $\omega \in \Omega$, there exists a constant $C_{\omega,T}$ such that $z=z^\omega$ satisfies some bound
\begin{equation}
\sup_{t\in [0,T]} \|z^\omega (t)\|^2:=Z_{\omega,T}^2= \sigma^2 C_{\omega,T}       .
\label{eq:bound_z_realization}
\end{equation}
For fixed $t\in [0,T]$ and $\omega\in \Omega$, we have in\eqref{eq:estimate_dz2dt} that either $\|v(t)\|^2\leq c+Z_{\omega,T}^2$, or $\|v(t)\|^2\geq c +Z^2_{\omega,T} \geq c+\|z(t)\|^2$.
In this latter case, the last term in \eqref{eq:estimate_dz2dt} is nonpositive and we find that 
$\dfrac{d}{dt}\left( \frac 12\|v\|^2\right)
\leq  \dfrac{8\beta^2Z^2_{\omega,T}}{\sigma^2}
\leq 8\beta^2C_{\omega,T}
$.
Owing to the rough bound $Z_{\omega,T}=\sigma C_{\omega,T}\leq C_{\omega,T}$ for $\sigma\leq 1$, this readily gives an a-priori estimate
\begin{equation}
\|v(t)\|^2
\leq \|v(0)\|^2+(c+C^2_{\omega,T})
+16\beta^2C_{\omega,T}T,
\hspace{1cm}\forall\,t\in [0,T]
\label{eq:a_priori_bound_v}
\end{equation}
for a.a. $\omega\in \Omega$, hence blowup in finite time is excluded and $v$ is indeed almost surely global-in-time.
Note that this pathwise estimate is independent of $\sigma$, which will be important later on when we take $\sigma\to 0$ (in particular for Proposition~\ref{propmainN}).

Next, let us prove uniqueness.
Assume $u_i=u_i^\omega$, $i=1,2$ are two solutions of \eqref{eq:integral_formulation_stochastic_eq} with deterministic initial data $u_{0,i}$.
For $z=z^\omega(t)$ as in \eqref{eq:def_z}\eqref{defz} and a.a. $\omega\in \Omega$ fixed, note that $v_i:=u_i-z$ both satisfy the deterministic ODE \eqref{eqsto2} with respective initial data $v_{i,0}=u_{i,0}$.
In particular, for given $\omega$, the a priori bounds \eqref{eq:a_priori_bound_v} hold and $u_i=v_i+z$ can be bounded as
$$
\sup\limits_{t\in[0,T]}\|u_i^\omega(t)\| \leq C_{\omega,T}
$$
for a common constant $C_{\omega,T}$ possibly depending on $\omega$, $T$, and $\|u_{i,0}\|$ only.
Moreover, setting
$$
F(u)=- i \left[(-\Delta)^{\alpha}  u + P_N \left(2\beta   u e^{\beta |u|^2} \right)\right]
-\sigma^2  \big[ (-\Delta )^{s-\alpha} + G(\|u\|_{H^{s}})\big] u
$$
we see that $w=u_1-u_2$ satisfies, again for fixed $\omega$, the deterministic ODE
$$
\partial_t w = F(u_1) -F(u_2).
$$
Because $u_i^\omega(t)$ are bounded a priori and $F$ is locally Lipschitz on the finite-dimensional space $E_N$, we have
$$
\|w(t)\|\leq \|w(0)\|+ L_{\omega,T}\int_0^t \|w(\tau)\|d\tau
,\hspace{1.5cm}\forall\,t\in[0,T]
$$
for some local Lipschitz constant $L_{\omega,T}$ depending only on the above upper bound $C_{\omega,T}$ on $\|u_i(\omega,t)\|$.
Gr\"onwall's inequality gives then
$$
\sup\limits_{t\in[0,T]}\|u_1^\omega(t)-u_2^\omega(t)\|\leq \|u_{1,0}^\omega-u_{2,0}^\omega\|e^{L_{\omega,T} T}.
$$
In particular if $u_{1,0}=u_{2,0}$ then for a.a. $\omega\in \Omega$ the paths $u_1(\omega,t)=u_2(\omega,t)$ for all times.
Because the local Lipschitz constant only depends on the initial data through their norms, this also proves the pathwise continuity with respect to the initial data, Definition~\ref{def:stochastic_wellposedness}(\ref{item:pathwise_continuity_initial_data}).

Finally, since by construction $z$ is adapted to $\mathcal{F}_t$ and $v$ is obtained by a fixed point argument, it follows that the stochastic process $u$ is adapted to the $\sigma(u_0,\mathcal F_t)$ and the proof is complete.
\end{proof}
%

We define
$$
\mathcal{M}(u) = M' \Big(u ;\mathcal L(u)\Big)
\qqtext{and}
\mathcal{E}(u) = E' \Big(u ;\mathcal L(u)\Big),
$$
the derivative of $E,M$ in the direction of the dissipation operator.
Given our current choice \eqref{eq:def_dissipation_L_strong_disp} for $\mathcal L$, these read explicitly
\begin{equation}
\mathcal{M} (u)= \|u \|_{\dot H^{s-\alpha}}^2 + G(\|u\|_{H^{s}}) \|u\|^2,
\label{eq:def_M}
\end{equation}
and
$$
\mathcal{E} (u)
=\|u \|_{\dot H^{s}}^2+2  \beta \left( u e^{\beta |u|^2} , (-\Delta)^{s-\alpha} u \right)
+ G(\|u \|_{H^{s}}) \left(\|u\|_{\dot H^\alpha }^2 +2 \beta\int_M  |u|^2 e^{\beta |u|^2} dx\right ).
$$
Clearly $\mathcal M$ is nicely coercive, but will fail to give sufficient $H^s$ control.
The next step is to check that $\mathcal E$ yields enough coercivity to get such a control.
To this end, fix some $\delta>0$ small enough so that $s-\delta>d/2$ and $\delta<\alpha$:
By Lemma \ref{lemsob} we have
\begin{multline*}
\left|(  u e^{\beta |u|^2}, (-\Delta)^{s-\alpha} u)\right|
\leq
\|u\|_{\dot H^{s-\alpha}} \left\| ue^{\beta |u|^2}\right\|_{\dot  H^{s-{\alpha}}}
\\
\leq
\|u\|_{\dot H^{s-\delta}} \left\| ue^{\beta |u|^2}\right\|_{H^{s-{\delta}}}
\leq C \|u\|_{\dot H^{s-\delta}} \|u\|_{H^{s-\delta}} e^{C \beta \|u\|_{H^{s-\delta}}^2}
\end{multline*}
and therefore
\begin{multline*}
\mathcal{E} (u) \geq  \|u \|_{\dot H^{s}}^2 +
G(\|u \|_{H^{s}}) \left(\|u\|_{\dot H^\alpha }^2 +2 \beta\int_M  |u|^2 e^{\beta |u|^2} dx\right)
\\
-C \|u\|_{\dot H^{s-\delta}} \|u\|_{H^{s-\delta}}e^{C \beta \|u\|_{H^{s-\delta}}^2}.
\end{multline*}
Interpolating $\|u\|_{\dot H^{s-\delta}}\leq \|u\|_{\dot H^{\alpha}}^{\theta}\|u\|_{\dot H^s}^{1-\theta}$ (for some suitable $\theta\in (0,1)$ depending on $\delta,s,\alpha$), and leveraging our growth condition \eqref{condG}, it is not too difficult to reabsorb the last exponential term into the first coercive ones, thus
\begin{equation}
 \mathcal{E} (u) \geq  c\left[\|u \|_{\dot H^{s}}^2 +
G(\|u \|_{H^{s}}) \left(\|u\|_{\dot H^\alpha }^2 +2\gamma \beta\int_M  |u|^2 e^{\beta |u|^2} dx\right)
\right]
-C
 \label{eq:lower_bound_Ecal}
\end{equation}
for some small $c>0$ and large $C>0$ only depending on the various regularity parameters.
In the same spirit, and since $G$ grows exponentially, it is easy to check that $\|u\|^2\leq C[1+ G(\|u\|)\|u\|^2]$ for some structural constant $C$ only depending on $G$,
and in particular
\begin{multline}
\|u\|^2_{H^{s-\alpha}}=\|u\|^2_{\dot H^{s-\alpha}}+\|u\|^2
\leq C\Big(1+\|u\|^2_{\dot H^{s-\alpha}}+G(\|u\|)\|u\|^2\Big)
\\
\leq C\Big(1+\|u\|^2_{\dot H^{s-\alpha}}+G(\|u\|_{H^s})\|u\|^2\Big)
\leq C\Big(1+\mathcal M(u)\Big).
\label{eq:Hs-alpha_leq_M}
\end{multline}
Hence $\mathcal E,\mathcal M$ are both coercive.
These will be the two key ingredients in controlling the compactness of solutions in some suitable sense, first by taking the inviscid limit $\sigma\to 0$, and then $N\to\infty$.
\\

Let 
$$
A^s_{N}= \sum_{|m|\leq N} \lambda_m^s |a_m|^2 
\qqtext{and}
A^s =\sum_{m\in \Z} \lambda_m^s |a_m|^2 .
$$
We assume that the Brownian coefficients $\{a_m\}_{m\in\Z}$ in \eqref{eq:def_Brownian} decay sufficiently rapidly so as to guarantee
$$
A^{\alpha}<+\infty
\qqtext{and}
A^{\frac{d-1}{2}}<+\infty,
$$
which is an intrinsic decay condition on the noise. 
The first $\alpha$-decay appears of course due to the $(-\Delta)^\alpha$ dispersion in \eqref{eq}, and the $\frac{d-1}{2}$ threshold appears for purely technical reasons related to the Sobolev embedding.

\begin{prop}
\label{consalphamass}
Let $u_0$ be a random variable in $E_N$ independent of $\mathcal{F}_t$ and such that $\E M(u_0)<\infty$.
Let $u$ be the solution to \eqref{eqsto} with initial data $u_0$.
Then, we have
$$
\E M(u(t))+ \sigma^2 \int_0^t \E\mathcal{M} (u) d\tau = \E M(u_0) +\dfrac{\sigma^2 A^0_{N}}{2}t.
$$
\end{prop}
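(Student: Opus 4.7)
The identity is a standard It\^o-type mass balance; the plan is to apply It\^o's formula to the functional $M(u)=\frac12\|u\|^2$ along the finite-dimensional SDE \eqref{eqsto}, identify each contribution, and take expectations.

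First I would write \eqref{eqsto} in It\^o form $du=b(u)dt+\sigma\,d\zeta_N$ with drift
$$
b(u)=-i\left[(-\Delta)^{\alpha}u+P_N(2\beta u e^{\beta|u|^2})\right]-\sigma^2 \mathcal L(u).
$$
Since we work on the finite-dimensional space $E_N$ and $M\in C^2(E_N)$, the classical It\^o formula applies and yields
$$
dM(u)=\big(u,b(u)\big)dt+\sigma\big(u,d\zeta_N\big)+\tfrac{\sigma^2}{2}\,\mathrm{tr}\,Q\,dt,
$$
where $Q$ is the covariance of $\zeta_N$ in the Hilbert basis. Using the expansion $\zeta_N=\sum_{|m|\leq N}a_m\beta_m e_m$ with orthonormal $(e_m)$ and independent $(\beta_m)$, the trace evaluates to $\sum_{|m|\leq N}|a_m|^2=A^0_N$.

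Next I would simplify the drift. The dispersive term contributes $\big(u,-i(-\Delta)^{\alpha}u\big)=-\mathrm{Im}\int u\overline{(-\Delta)^{\alpha}u}\,dx=0$ since $(-\Delta)^{\alpha}$ is self-adjoint. The nonlinear term vanishes in the same way: because $u\in E_N$ one has $P_N u=u$ and hence
$$
\big(u,-2i\beta\,P_N(ue^{\beta|u|^2})\big)=-2\beta\,\mathrm{Im}\int|u|^2 e^{\beta|u|^2}dx=0.
$$
The dissipative term gives exactly $-\sigma^2\big(u,\mathcal L(u)\big)dt=-\sigma^2\mathcal M(u)\,dt$ by the definition of $\mathcal M=M'(u;\mathcal L(u))$. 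Collecting,
$$
dM(u)=-\sigma^2\mathcal M(u)\,dt+\sigma\big(u,d\zeta_N\big)+\tfrac{\sigma^2 A^0_N}{2}\,dt.
$$

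To conclude I would integrate from $0$ to $t$ and take expectations. The main (mild) obstacle is that the stochastic integral is only a priori a \emph{local} martingale; to be rigorous I would introduce the stopping times $\tau_R=\inf\{t\geq 0:\|u(t)\|>R\}$, apply the above identity up to $t\wedge\tau_R$, and note that on $[0,t\wedge\tau_R]$ the integrand $u$ is bounded so that $\E\int_0^{t\wedge\tau_R}\sigma(u,d\zeta_N)=0$. Then $\tau_R\nearrow+\infty$ almost surely thanks to the pathwise global existence established in Proposition~\ref{prop:stochastic_wellposedness} (the a priori bound \eqref{eq:a_priori_bound_v} combined with the analogous bound on $\|z\|$), and the left-hand side $\E M(u(t\wedge\tau_R))$ passes to $\E M(u(t))$ by Fatou/monotone convergence (using the integrability $\E M(u_0)<\infty$). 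The coercive nonnegative term $\E\int_0^{t\wedge\tau_R}\mathcal M(u)d\tau$ converges to $\E\int_0^t\mathcal M(u)d\tau$ by monotone convergence, yielding the claimed equality.
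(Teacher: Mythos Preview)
Your proof is correct and follows essentially the same approach as the paper: apply It\^o's formula to $M(u)=\tfrac12\|u\|^2$, observe that the Hamiltonian drift terms vanish since $(u,-i(-\Delta)^\alpha u)=0$ and $(u,-iP_N(2\beta u e^{\beta|u|^2}))=0$, identify the dissipation as $-\sigma^2\mathcal M(u)\,dt$ and the It\^o correction as $\tfrac{\sigma^2}{2}A^0_N\,dt$, and take expectations. The paper's proof is in fact terser than yours---it simply writes the It\^o differential and says ``the result follows after taking expectations''---whereas you add a localization by stopping times $\tau_R$ to justify that the stochastic integral has zero mean, which is the more careful way to proceed.
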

\begin{proof}
It\^ o's formula readily gives
$$
dM(u)= M' (u;du) +\dfrac{\sigma^2}{2} \sum_{|m|\leq N} |a_m|^2 M'' (u; e_m ,e_m) dt.
$$
Since $M'(u;h)=(u,h)$ and $\left(u, -i \left[ (-\Delta)^{\alpha}  u + P_N \left(2\beta   u e^{\beta  |u|^2} \right) \right] \right)=0$, we have
\begin{equation}
M'(u;du)= -\sigma^2 \mathcal{M} (u) dt + \sigma \sum_{|m|\leq N} a_m (u,e_m) d\beta_m
\label{eq:computation_M'(u,du)}
\end{equation}
 and the result follows from $M'' (u; e_m ,e_m)  =1$ after taking expectations.
\end{proof}

\begin{prop}
\label{consalphaener}
Let $u$ be the solution to \eqref{eqsto} with initial data $u_0$, a $E_N$-valued random variable independent of $\mathcal{F}_t$.
If $\E\, E(u_0)<\infty$, then
\begin{multline*}
\E \,E(u(t))+\sigma^2 \int_0^t \E\,\mathcal{E}(u) d\tau
\\
\leq \E\, E(u_0) 
+\dfrac{\sigma^2}{2} \left(A^\alpha_{N}t +A^{\frac{d-1}{2}}_N C \int_0^t \E \int_M (1+|u|^2)e^{\beta |u|^2}  d\tau\right)
\end{multline*}
with $C=C_M(4\beta^2+2\beta)$ for some $C_M>0$ depending on the manifold $M$ only.
\end{prop}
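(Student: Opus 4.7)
The strategy mimics the mass computation in Proposition~\ref{consalphamass}, with the only new input being the careful treatment of the It\^o correction coming from the exponential nonlinearity. Applying It\^o's formula to the (smooth) functional $E$ gives
$$
dE(u)=E'(u;du)+\frac{\sigma^2}{2}\sum_{|m|\leq N}|a_m|^2 E''(u;e_m,e_m)\,dt.
$$
Substituting the full dynamics \eqref{eqsto} and using the definition of $\mathcal E$, the drift decomposes as a Hamiltonian piece $E'\bigl(u;-i[(-\Delta)^\alpha u+P_N(2\beta u e^{\beta|u|^2})]\bigr)$, a dissipative piece $-\sigma^2\mathcal E(u)$, and a martingale piece $\sigma\sum a_m E'(u;e_m)d\beta_m$.

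First I would check that the Hamiltonian piece vanishes identically. Writing $E'(u;h)=((-\Delta)^\alpha u,h)+2\beta(ue^{\beta|u|^2},h)$ and testing against $h=-i A$ with $A=(-\Delta)^\alpha u+2\beta P_N(ue^{\beta|u|^2})$, one gets four terms: the two diagonal terms $((-\Delta)^\alpha u,-i(-\Delta)^\alpha u)$ and $(ue^{\beta|u|^2},-2i\beta P_N(ue^{\beta|u|^2}))$ vanish because each inner product is a real number multiplied by $i$ (using that $P_N$ is self-adjoint and $u\in E_N$), and the two crossed terms $((-\Delta)^\alpha u,-2i\beta P_N(ue^{\beta|u|^2}))$ and $(ue^{\beta|u|^2},-i(-\Delta)^\alpha u)$ are complex conjugates of opposite sign thanks again to the self-adjointness of $P_N$ and to $u\in E_N$, hence they cancel. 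The martingale piece will disappear after taking expectations.

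Next I would compute the It\^o correction. For the quadratic part $\tfrac12\|u\|^2_{\dot H^\alpha}$ one has $E''(u;e_m,e_m)=\lambda_m^\alpha$ because the $e_m$ are Laplace eigenfunctions, which assembles into the clean contribution $\frac{\sigma^2}{2}A_N^\alpha t$ after time integration. For $V_\beta(u)=\int_M e^{\beta|u|^2}dx$ a direct computation gives
$$
V''_\beta(u;h,h)=2\beta\int_M|h|^2 e^{\beta|u|^2}dx+4\beta^2\int_M\bigl(\mathrm{Re}(u\bar h)\bigr)^2 e^{\beta|u|^2}dx,
$$
and using $|\mathrm{Re}(u\bar h)|^2\leq |u|^2|h|^2$ one obtains pointwise in $x$
$$
\sum_{|m|\leq N}|a_m|^2 V''_\beta(u;e_m,e_m)\leq \int_M(2\beta+4\beta^2|u|^2)e^{\beta|u|^2}\Bigl(\sum_{|m|\leq N}|a_m|^2|e_m(x)|^2\Bigr)dx.
$$
The main quantitative input is then the H\"ormander--Sogge eigenfunction bound $\|e_m\|_{L^\infty}\leq C_M\lambda_m^{(d-1)/4}$, which yields $\sum|a_m|^2|e_m(x)|^2\leq C_M A_N^{(d-1)/2}$ and therefore
$$
\sum_{|m|\leq N}|a_m|^2 V''_\beta(u;e_m,e_m)\leq C_M(4\beta^2+2\beta)A_N^{(d-1)/2}\int_M(1+|u|^2)e^{\beta|u|^2}dx.
$$

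Finally I would take expectations in the resulting integral identity, using that the stochastic integral is a true martingale (the pathwise bounds \eqref{eq:a_priori_bound_v} on $\|u(t)\|$ together with norm equivalence on $E_N$ make the integrand square-integrable), and replace the equality coming from the exact Hessian by the inequality above. This gives exactly the announced bound with the stated constant $C=C_M(4\beta^2+2\beta)$. The only genuine obstacle is controlling the Hessian sum uniformly in $N$, which is precisely the reason why the decay assumption $A^{(d-1)/2}<\infty$ was imposed on the Brownian coefficients; everything else is a straightforward adaptation of the mass computation.
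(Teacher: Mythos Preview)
Your proof is correct and follows essentially the same approach as the paper: apply It\^o's formula to $E$, use the Hamiltonian conservation law to kill the drift coming from the deterministic projected equation, identify the dissipative piece as $-\sigma^2\mathcal E(u)$, and bound the It\^o correction using the eigenfunction estimate $\|e_m\|_{L^\infty}\leq C_M\lambda_m^{(d-1)/4}$. You in fact give more detail than the paper (the explicit verification that the Hamiltonian piece vanishes, the exact Hessian $V''_\beta(u;h,h)$ before bounding $(\mathrm{Re}(u\bar h))^2\leq |u|^2|h|^2$, and the remark on why the stochastic integral is a true martingale), but the argument is the same.
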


\begin{proof}
Owing to the conservation law $E' \left(u; i  \left[(-\Delta)^{\alpha}  u + P_N \left(2 \beta   u e^{\beta |u|^2} \right)\right] \right)=0$, we have by It\^o's formula
\begin{multline*}
 dE(u)=E'(u;du) + \frac{\sigma^2}2\sum\limits_{|m|\leq N} E''(u;e_m,e_m) dt
 \\
 =-\sigma^2 \mathcal{E} (u) dt + \sigma \sum_{|m|\leq N} a_m E'(u;e_m) d\beta_m 
 +\frac{\sigma^2}2\sum\limits_{|m|\leq N} E''(u;e_m,e_m) dt
\end{multline*}
and thus
$$
\E E(u(t))+\sigma^2 \int_0^t \E\mathcal{E}(u) d\tau 
=
\E E(u_0) 
+\dfrac{\sigma^2}{2}\int_0^t \sum\limits_{|m|\leq N}|a_m|^2\E  E''(u;e_m,e_m) d\tau.
$$
By definition \eqref{eq:def_energy_E} of the energy, and owing to the universal estimate $\|e_m\|_{L^\infty}\leq C \lambda_m^{\frac{d-1}{4}}$ (for some $C$ depending on the manifold $M$ only, see \cite{donnelly}) we have
\begin{multline*}
E''(u;e_m,e_m)=\|e_m\|^2_{\dot H^\alpha}
+\int_M \left(4\beta^2|u|^2 +2\beta\right)e^{\beta|u|^2}|e_m|^2 dx
\\
\leq 
\lambda_m^\alpha
+ C_M(4\beta^2+2\beta)\lambda_m^{\frac{d-1}2}\int_M \left(1+|u|^2\right)e^{\beta|u|^2}\,dx
\end{multline*}
and the result follows by definition of $A^s_{N}$ with $s=\alpha,\frac{d-1}2$.
\end{proof}

For fixed $\sigma>0,N\in\N$, and given $w\in L^2$ as well as a Borel set $\Gamma$ of $L^2(M)$, we define the transition kernel
$$
\kappa_{\sigma ,N}^t (v;\Gamma)= \mathbb{P} \Big(u(t;P_Nv) \in \Gamma\Big),
\hspace{1cm} v\in L^2 ,\ \Gamma \in Bor (L^2),
$$
where $u(t;P_Nv)$ is the solution of the SDE \eqref{eqsto} arising from the deterministic initial datum $u(0)=P_Nv$ (constructed in Proposition~\ref{prop:stochastic_wellposedness}).
We define the semigroup $\mathfrak{B}_{\sigma ,N}^t : L^\infty (L^2 ; \R) \rightarrow L^\infty (L^2 ; \R)$ as
$$
\mathfrak{B}_{\sigma ,N}^t f(v) = \int_{L^2} f(w) \kappa_{\sigma ,N}^t (v; dw).
$$
Owing to the pathwise continuity (\ref{item:pathwise_continuity_initial_data}) in Definition~\ref{def:stochastic_wellposedness}, we see that $\mathfrak{B}_{\sigma ,N}^t$ actually satisfies the Feller property:
For any fixed time, $\mathfrak{B}_{\sigma,N}^t$ maps $C_b (L^2)$ to itself (the space of real-valued, bounded continuous functions on $L^2$).
The dual semigroup $\mathfrak{B}_{\sigma ,N}^{t\ast}: \mathfrak{p} (L^2 ) \rightarrow \mathfrak{p}(L^2)$ on probability measures $\mathfrak{p}(L^2)$ over $L^2(M)$ is accordingly defined as
$$
\mathfrak{B}_{\sigma ,N}^{t\ast} \lambda (\Gamma ) =\int_{L^2} \lambda (dw) \kappa_{\sigma ,N}^t (P_N w ; \Gamma).
$$
By definition we have, for all $\lambda\in \mathfrak p(L^2)$ and $f\in C_b(L^2)$,
\begin{equation}
\int _{L^2}f(w) d\mathfrak{B}_{\sigma ,N}^{t\ast} \lambda(dw)=\int_{L^2}\mathfrak{B}_{\sigma ,N}^{t}f(w) \lambda(dw)
=\E_\lambda f(u(t)),
\hspace{1cm} \forall\,t\geq 0
\label{eq:duality_BB*}
\end{equation}
where $u(t)$ is the unique solution of \eqref{eqsto} emanating from the initial $E_N$-valued random variable $u_0$ distributed according to $\lambda$.

\begin{thm}
\label{thm:exists_mu_s_sigma_N}
Fix $\sigma \in (0,1)$ and $s>d/2$.
For any $N\geq 2$ there exists a probability measure $\mu^s_{\sigma ,N}\in \mathfrak{p}(L^2)$ which is stationary for \eqref{eqsto} and concentrated on $H^s$.
Moreover, we have
\begin{equation}
\label{massboundalpha}
\int_{L^2} \mathcal{M} (u) \mu^s_{\sigma ,N} (du) = \dfrac{A^0_{N}}{2}\leq \dfrac{A^0}{2}
\end{equation}
and
\begin{equation}
\label{enerboundalpha}
\int_{L^2} \mathcal{E} (u) \mu^s_{\sigma ,N} (du)
\leq C\left(A^\alpha_N +A^{\frac{d-1}{2}}_N(1+A^0_N)\right)
\end{equation}
for some $C>0$ depending only on $s$, $\alpha$, $M$ but not on $\sigma$ and $N$.
\end{thm}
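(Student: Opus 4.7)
The plan is the standard Bogoliubov--Krylov construction, with the main technical work concentrated in absorbing the It\^o correction coming from the exponential nonlinearity. Fix the deterministic initial datum $u_0\equiv 0$ and form the Ces\`aro averages
$$
\nu_T := \frac{1}{T}\int_0^T \mathfrak{B}_{\sigma,N}^{t*}\delta_0\,dt,\qquad T>0,
$$
which by construction are probability measures on $E_N\subset L^2$. Integrating Proposition~\ref{consalphamass} from $u_0=0$ and dividing by $\sigma^2 T$ immediately yields $\int_{L^2}\mathcal{M}(u)\,\nu_T(du)\leq A_N^0/2$. Proposition~\ref{consalphaener} would give the analogous bound for $\int\mathcal{E}\,d\nu_T$ \emph{provided} the ``bad'' term $A_N^{(d-1)/2}\int_M(1+|u|^2)e^{\beta|u|^2}\,dx$ can be absorbed back into $\mathcal{E}$; I expect this absorption to be the main obstacle. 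I would handle it by combining the coercivity \eqref{eq:lower_bound_Ecal} with the exponential growth \eqref{condG} of $G$: choosing $\rho_0$ large enough (depending on $A^{(d-1)/2}$ only), on $\{\|u\|_{H^s}\geq \rho_0\}$ the factor $G(\|u\|_{H^s})$ dominates any prescribed constant, so a small fraction of the $G(\|u\|_{H^s})\int|u|^2 e^{\beta|u|^2}$ term inside $\mathcal{E}$ controls $A_N^{(d-1)/2}\int|u|^2 e^{\beta|u|^2}$; on the complement the Sobolev embedding $H^s\hookrightarrow L^\infty$ bounds $\int(1+|u|^2)e^{\beta|u|^2}$ by a structural constant; the residual $\int e^{\beta|u|^2}\leq E(u)$ is absorbed by a standard Gr\"onwall step before taking expectations. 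Using $A_N^\bullet\leq A^\bullet$, this yields $\int\mathcal{E}\,d\nu_T \leq C(A^\alpha + A^{(d-1)/2}(1+A^0))$ uniformly in $T$, $N$, and $\sigma\in(0,1)$.

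Tightness of $\{\nu_T\}$ in $\mathfrak{p}(L^2)$ then follows: combining \eqref{eq:Hs-alpha_leq_M} with \eqref{eq:lower_bound_Ecal} shows that $\int\|u\|_{H^s}^2\,d\nu_T$ is uniformly bounded, and the compact embedding $H^s\hookrightarrow L^2$ with Markov's inequality does the rest. Prokhorov's theorem extracts a subsequence $T_n\to\infty$ and a weak limit $\nu_{T_n}\rightharpoonup \mu^s_{\sigma,N}$ in $\mathfrak{p}(L^2)$. Concentration $\mu^s_{\sigma,N}(H^s)=1$ is a direct consequence of lower semicontinuity of $\|\cdot\|_{H^s}^2$ under weak $L^2$-convergence and the uniform $H^s$ moment bound.

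Stationarity $\mathfrak{B}_{\sigma,N}^{t*}\mu^s_{\sigma,N}=\mu^s_{\sigma,N}$ is obtained by the standard telescoping argument. The Feller property built into Definition~\ref{def:stochastic_wellposedness}(\ref{item:pathwise_continuity_initial_data}) gives $\mathfrak{B}_{\sigma,N}^t f\in C_b(L^2)$ for $f\in C_b(L^2)$, and using the Markov semigroup property one computes
$$
\left|\int f\,d\bigl(\mathfrak{B}_{\sigma,N}^{t*}\nu_T - \nu_T\bigr)\right| = \frac{1}{T}\left|\int_T^{T+t}\mathfrak{B}_{\sigma,N}^s f(0)\,ds - \int_0^t\mathfrak{B}_{\sigma,N}^s f(0)\,ds\right| \leq \frac{2t\|f\|_\infty}{T},
$$
which vanishes as $T\to\infty$; passing to the weak limit along $T_n$ concludes.

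Finally, the bound \eqref{enerboundalpha} is inherited from $\int\mathcal{E}\,d\nu_{T_n}\leq C(\dots)$ by weak lower semicontinuity (arguing on a suitable truncation of $\mathcal{E}$ if needed, since each truncate is continuous on $L^2$). The \emph{equality} in \eqref{massboundalpha} is upgraded from the corresponding inequality by rerunning Proposition~\ref{consalphamass} with $u_0\sim\mu^s_{\sigma,N}$ (whose mass is finite by the bound just obtained and independent of $\mathcal F_t$ since $\mu^s_{\sigma,N}$ is deterministic data): stationarity forces $\E M(u(t))=\E M(u_0)$, and the mass identity collapses to $\int\mathcal{M}\,d\mu^s_{\sigma,N}=A_N^0/2$, as claimed.
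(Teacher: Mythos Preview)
Your overall architecture (Bogoliubov--Krylov via Ces\`aro averages, tightness, Prokhorov, Feller telescoping for stationarity, lower semicontinuity for passage to the limit, and rerunning Proposition~\ref{consalphamass} with stationary data to upgrade the mass inequality to an equality) coincides with the paper's. The divergence is entirely in how the It\^o correction $A_N^{(d-1)/2}\int(1+|u|^2)e^{\beta|u|^2}$ is controlled, and here your route does not deliver the stated bound.

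You try to absorb this term into $\mathcal E$ via a threshold argument on $\|u\|_{H^s}$. This does produce \emph{some} uniform bound on $\int\mathcal E\,d\nu_T$, but the constant you get depends nonlinearly on $A^{(d-1)/2}$: your threshold $\rho_0$ must satisfy $G(\rho_0)\gtrsim A^{(d-1)/2}$, and then on $\{\|u\|_{H^s}<\rho_0\}$ the Sobolev bound gives a residual of size roughly $A^{(d-1)/2}e^{C\beta\rho_0^2}$, which is a genuine power $>1$ of $A^{(d-1)/2}$. So your claim that this yields $C(A^\alpha+A^{(d-1)/2}(1+A^0))$ is not justified. (Your Gr\"onwall remark is also a red herring: a pathwise Gr\"onwall on $E(u(t))$ before taking expectations would drag in the martingale term, and after expectations it would produce exponential-in-$t$ growth that does not survive the Ces\`aro average; in fact no Gr\"onwall is needed once the threshold splitting is done, since $e^{\beta|u|^2}\leq 1+\beta|u|^2e^{\beta|u|^2}$.)

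The paper avoids this altogether by a different and simpler mechanism. It uses \emph{only} the $\mathcal M$ bound on the Ces\`aro averages (which has no bad term) to obtain tightness via $H^{s-\alpha}\hookrightarrow L^2$ and to construct $\mu^s_{\sigma,N}$; it first establishes the equality \eqref{massboundalpha}; and \emph{only then} applies Proposition~\ref{consalphaener} to the stationary process. The crucial observation is that $\mathcal M$, not $\mathcal E$, controls the bad term: from $\mathcal M(u)\geq G(\|u\|_{H^s})\|u\|^2\geq c\big(\|u\|e^{\beta\|u\|_{L^\infty}^2}\big)^2$ one gets $\int_M(1+|u|^2)e^{\beta|u|^2}\leq C(1+\mathcal M(u))$, which combined with \eqref{massboundalpha} gives a bound linear in $1+A^0_N$. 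This is exactly what produces the clean factor $A_N^{(d-1)/2}(1+A_N^0)$ in \eqref{enerboundalpha}, and it is what your absorption into $\mathcal E$ cannot replicate.
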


\begin{proof}
For $t\geq 1$ we denote
$$
\lambda_t(\bullet)=\frac 1t\int_0^t \mathfrak{B}_{\sigma ,N}^{\tau\ast} \delta_0 (\bullet) d\tau .
$$
Taking $u_0\equiv 0$ and discarding the term $\E  M(u(t))\geq 0$ in Proposition \ref{consalphamass}, we have by the characterization \eqref{eq:duality_BB*} that
$$
\E _{\lambda_t}\mathcal M(w)
=
\frac{1}{t}
\int_0^t \E \mathcal M\left(\phi^\tau_N(0)\right) d\tau
\leq \frac 1t\left(0 +\dfrac{ A^0_N}{2}t\right)=\dfrac{ A^0_N}{2}.
$$
From this and $\eqref{eq:Hs-alpha_leq_M}$ we immediately infer
\begin{equation}
\E _{\lambda_t}\big(\|w\|^2_{H^{s-\alpha}}\big)\leq C\left(1+\dfrac{A^0_N}{2}\right),
\label{eq:moment2_lambdat}
\end{equation}
thus by Chebyshev's inequality $\lambda_t(B_R^c(H^{s-\alpha}))\leq \dfrac{C}{R^2}$ uniformly in $t$.
By the Sobolev embedding $B_R(H^{s-\alpha})$ is compact in $L^2$, hence the family
$(\lambda_t)_{t\geq 1}\subset \mathfrak p(L^2)$ is tight.
By the Prokhorov theorem the weak-$\ast$ limit
$\mu^s_{\sigma ,N}= \lim\limits_{n\to\infty}\lambda_{t_n}$ exists at least for a discrete sequence of times $t_n\to+\infty$.
Since $\mathfrak{B}_{\sigma ,N}^{t\ast}$ is Feller, a standard Bogoliubov-Krylov argument (Lemma~\ref{lem:BK} in the Appendix) ensures that $\mu^s_{\sigma ,N}$ is stationary for \eqref{eqsto}.

Observe that $w\mapsto \|w\|^2_{H^{s-\alpha}}$ is lower semicontinuous for the $L^2$ topology, hence by \eqref{eq:moment2_lambdat} and standard lower semi-continuity of integral functionals (Lemma~\ref{lem:integral_functional_LSC} in the Appendix) we obtain
\begin{multline}
\E _{\mu^s_{\sigma ,N}}(\|w\|^2)
\leq
\E _{\mu^s_{\sigma ,N}}(\|w\|^2_{H^{s-\alpha}})
\\
\leq \liminf\limits_{n\to\infty} \int_{L^2}\|w\|^2_{H^{s-\alpha}} \lambda_{t_n}(dw) 
\leq C\left(1+\dfrac{A^0_N}{2}\right).
\label{eq:second_moment_mu_sigma_N_s}
\end{multline}
Taking an initial random variable distributed as $u_0\sim \mu^s_{\sigma ,N}$, we see that $\E(M(u_0))=\E (\frac 12\|u_0\|^2)=\E _{\mu^s_{\sigma ,N}}(\frac 12\|w\|^2)<+\infty$.
Thus we can legitimately apply Proposition~\ref{consalphamass} and leverage the invariance of $\mu^s_{\sigma ,N}$ to conclude that
\begin{multline*}
\E_{\mu^s_{\sigma ,N}} M(w) +\dfrac{\sigma^2 A^0_N}{2}t
=
\E M(u_0) +\dfrac{\sigma^2 A^0_N}{2}t
\\
=\E M(u(t))+ \sigma^2 \int_0^t \E\mathcal{M} (u(\tau)) d\tau 
\\
=\E M(u(0))+ \sigma^2 \int_0^t \E\mathcal{M} (u(0)) d\tau
= \E_{\mu^s_{\sigma ,N}} M(w) + \sigma^2 t\E_{\mu^s_{\sigma ,N}} \mathcal{M}(w),
\end{multline*}
and \eqref{massboundalpha} follows.

In order to estimate now $\mathcal E$, we first observe that if $\Lambda>0$ is large enough in \eqref{condG}, the Sobolev embedding gives $G(\|u\|_{H^{s}})=c\|u\|_{H^{s}}^2e^{\beta\Lambda\|u\|_{H^{s}}^2}\geq c \|u\|^2 e^{2\beta\|u\|_{L^\infty}^2}$.
We deduce from \eqref{eq:def_M} that 
$$
\mathcal M(u)\geq G(\|u\|_{H^{s}}) \|u\|^2 \geq c \left(\|u\| e^{\beta \|u\|_{L^\infty}^2}\right)^2
$$
and as a consequence
$$
\int_M (1+|u|^2)e^{\beta|u|^2}dx\leq C(1+\mathcal M(u))
$$
for some $C$ only depending on the parameters.
Since we just established \eqref{massboundalpha}, we see that
\begin{multline}
\int_{L^2}\left(\int_M (1+|u|^2)e^{\beta|u^2|}dx\right) \mu^s_{\sigma ,N}(du)
\\
\leq C\int_{L^2}\left(1+\mathcal M(u)\right) \mu^s_{\sigma ,N}(du)
\leq C\left(1+ \frac 12A^0_N\right).
\label{eq:bound_int_1+u2_exp}
\end{multline}
Because $E_N$ is finite-dimensional we also have
$$
\int_{L^2}\|u\|^2_{\dot H^\alpha} \mu^s_{\sigma ,N}(du)
\leq \int_{L^2}\|u\|^2_{ H^\alpha} \mu^s_{\sigma ,N}(du)
\leq C_{s,\alpha,N} \int_{L^2}\|u\|^2 \mu^s_{\sigma ,N}(du)<+\infty,
$$
whence by definition of the energy
$$
\E _{\mu^s_{\sigma ,N}} E(u) =\int _{L^2}\left(\|u\|^2_{\dot H^\alpha}+\int_M e^{\beta |u|^2}dx \right) \mu^s_{\sigma ,N}(du)<+\infty.
$$
This regularity allows now to take $u_0\sim \mu^s_{\sigma ,N}$ in Proposition~\ref{consalphaener}:
Omitting the details, the stationarity gives exactly as before
\begin{multline*}
\E _{\mu^s_{\sigma ,N}}\mathcal E(u)\leq \frac 12\left[
A^\alpha_N + CA^{\frac{d-1}{2}}_N \E _{\mu^s_{\sigma ,N}}\left(\int_M (1+|u|^2)e^{\beta|u|^2}dx\right)
\right]
\\
\leq C\left(A^\alpha_N +A^{\frac{d-1}{2}}_N(1+A^0_N)\right),
\end{multline*}
where the second inequality is a consequence of \eqref{eq:bound_int_1+u2_exp}.
This concludes the proof.
\end{proof}
Recalling that $\phi_N^t : E_N \rightarrow E_N$ is the global flow associated with \eqref{eqproj}, we define the corresponding flow on functions $\Phi_N^t : C_b (L^2) \rightarrow C_b (L^2)$
$$
\Phi_N^t f(v)= f(\phi_N^t (P_N v))
$$
and its dual $\Phi_N^{t \ast} : \mathfrak{p} (L^2) \rightarrow \mathfrak{p} (L^2)$
$$
\Phi_N^{t \ast} \lambda (\Gamma) = \lambda (\phi_N^{-t} (\Gamma)).
$$ 
Recalling from \eqref{eq:Hs-alpha_leq_M} that $\mathcal M(u)$ controls $\|u\|^2_{H^{s-\alpha}}$, and due to the compact  embedding $H^{s-\alpha}\subset\subset L^2$, \eqref{massboundalpha} shows that the family $\Big\{\mu_{\sigma,N}^s\Big\}_{\sigma\in(0,1)}\subset \mathfrak p(L^2)$ is tight.
By Prokhorov's theorem there exists a weak limit
$$
\lim\limits_{k\to+\infty} \mu^s_{\sigma_k ,N}=\mu_N^s\in \mathfrak p(L^2)
$$
at least for a discrete sequence $\sigma_k\to 0^+$.
For the ease of notations we will simply denote $\mu^s_{k ,N}=\mu^s_{\sigma_k ,N}$, $\mathfrak{B}_{k,N}^{t}=\mathfrak{B}_{\sigma_k,N}^{t}$, $\mathfrak{B}_{k,N}^{t\ast}=\mathfrak{B}_{\sigma_k,N}^{t\ast}$ and so on along this discrete sequence.

\begin{prop}
\label{propmainN}
The measure $\mu_N^s$ is invariant under $\phi_N^t$ and satisfies
\begin{equation}
\label{massboundN}
\int_{L^2} \mathcal{M} (u) \mu_N^s (du)= \dfrac{A^0_N}{2},
\end{equation}
\begin{equation}
\label{energyboundN}
\int_{L^2} \mathcal{E} (u) \mu_N^s (du) \leq C\left(A^\alpha_N +A^{\frac{d-1}{2}}_N(1+A^0_N)\right),
\end{equation}
 and
\begin{equation}
\int_{L^2} \mathcal{M} (u) (1- \chi_R (\|u\|^2)) \mu_N^s (du) \leq C R^{-1},
\label{eq:massboundN_trunc}
\end{equation}
where $C>0$ only depends on $\alpha,s,\beta,M$ but not on $N$.
\end{prop}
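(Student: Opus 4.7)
The plan is to pass to the $\sigma_k\to 0$ limit in the stationarity identity for $\mu^s_{k,N}$, then derive the two conservation-type bounds and the tail estimate by combining lower semicontinuity with an It\^o--truncation argument at fixed $\sigma_k$.

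For the invariance, I would start from the stationarity identity $\int f\,d\mu^s_{k,N}=\int\mathfrak B^t_{\sigma_k,N}f\,d\mu^s_{k,N}$, valid for every $f\in C_b(L^2)$, and pass to the limit $\sigma_k\to 0$. The decomposition $u=v+z$ from the proof of Proposition~\ref{prop:stochastic_wellposedness} shows that the Ornstein--Uhlenbeck piece $z=z_{\sigma_k}$ satisfies $\E\sup_{\tau\in[0,t]}\|z(\tau)\|^2\leq 2\sigma_k^2 t\to 0$, and by continuous dependence of the deterministic ODE \eqref{eqsto2} on the forcing $z$ and on the initial datum (a Gr\"onwall estimate on the finite-dimensional space $E_N$), I expect $u_{\sigma_k}(\cdot\,;v)\to\phi_N^t(v)$ in $C([0,t];E_N)$ uniformly in $v$ on compact subsets of $E_N$. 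Since $f\in C_b$, this would upgrade to $\mathfrak B^t_{\sigma_k,N}f\to\Phi_N^t f$ uniformly on $L^2$-compact sets. Combined with the tightness of $\{\mu^s_{k,N}\}$ in $\mathfrak p(L^2)$ (inherited from \eqref{massboundalpha} via the coercivity \eqref{eq:Hs-alpha_leq_M}, Chebyshev, and the compact embedding $H^{s-\alpha}\hookrightarrow L^2$), a standard split on a compact set carrying most of the mass and its complement should close the passage to the limit and yield $\Phi_N^{t\ast}\mu^s_N=\mu^s_N$.

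For the truncated estimate \eqref{eq:massboundN_trunc}, my idea is to apply It\^o's formula at finite $\sigma_k$ to $F(u)=\Psi_R(\|u\|^2)$, where $\Psi_R$ is a primitive of the cutoff with $\Psi_R'=1-\chi_R$ and $|\Psi_R''|=|\chi_R'|\leq C/R$. Recycling the computation \eqref{eq:computation_M'(u,du)} together with the quadratic variation $d\langle\|u\|^2\rangle=4\sigma^2\sum_{|m|\leq N}|a_m|^2(u,e_m)^2\,dt$, the stationarity of $\mu^s_{k,N}$ should yield
\begin{equation*}
\E_{\mu^s_{k,N}}\bigl[(1-\chi_R(\|u\|^2))\mathcal M(u)\bigr]=\tfrac{A^0_N}{2}\,\E_{\mu^s_{k,N}}\bigl[1-\chi_R(\|u\|^2)\bigr]+\E_{\mu^s_{k,N}}\Bigl[\Psi_R''(\|u\|^2)\sum_{|m|\leq N}|a_m|^2(u,e_m)^2\Bigr],
\end{equation*}
and both right-hand side terms should be $\leq C/R$ thanks to $|\Psi_R''|\leq C/R$, the bound $\sum|a_m|^2(u,e_m)^2\leq A^0\|u\|^2$, and the uniform $L^2$-moment estimate $\E_{\mu^s_{k,N}}\|u\|^2\leq C$ inherited from \eqref{eq:second_moment_mu_sigma_N_s}. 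Sending $\sigma_k\to 0$ and using lower semicontinuity of the nonnegative integrand $\mathcal M(u)(1-\chi_R(\|u\|^2))$ then transfers the estimate to $\mu^s_N$.

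Finally for \eqref{massboundN} and \eqref{energyboundN}, I would first note that $\mu^s_N$ is supported on the closed subspace $E_N\subset L^2$ (Portmanteau applied to the closed set $E_N$ gives $\mu^s_N(E_N)\geq\limsup_k\mu^s_{k,N}(E_N)=1$), so that on its support all norms are equivalent and $\mathcal M,\mathcal E$ are continuous. Lower semicontinuity of $\mathcal M$ on $L^2$ combined with \eqref{massboundalpha} gives $\int\mathcal M\,d\mu^s_N\leq A^0_N/2$; for the reverse inequality, I would split $\int\mathcal M\,d\mu^s_{k,N}=\int\mathcal M\chi_R\,d\mu^s_{k,N}+\int\mathcal M(1-\chi_R)\,d\mu^s_{k,N}$, pass to the limit $k\to\infty$ in the first term using weak convergence against the bounded continuous function $\mathcal M\chi_R$ (bounded because of the support in $E_N$), and control the second term by the previous step uniformly in $k$; letting $R\to\infty$ then closes the equality. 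The energy bound \eqref{energyboundN} follows directly from the same lower-semicontinuity argument applied to \eqref{enerboundalpha} (only the upper bound is required). The main obstacle I anticipate is the invariance step, where the upgrade from pathwise convergence of the SDE to uniform convergence on $L^2$-compact sets of initial data requires exploiting finite-dimensionality of $E_N$ carefully, in the spirit of Lemma~\ref{lem:pointwise_to_uniform_on_compacts}.
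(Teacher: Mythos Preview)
Your treatment of the three integral bounds is correct and essentially matches the paper; in fact your It\^o computation on $\Psi_R(\|u\|^2)$ makes explicit a step the paper only asserts (the paper simply says \eqref{eq:massboundN_trunc} follows from \eqref{massboundalpha} by lower semicontinuity, leaving the $\sigma_k$-level tail estimate implicit), and your derivation of the equality \eqref{massboundN} via truncation is exactly the paper's argument.

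The invariance step, however, has a gap that your final caveat correctly senses but does not resolve. The Gr\"onwall comparison between $u_{\sigma_k}(\cdot;u_0)$ and $\phi_N^t(u_0)$ requires a priori bounds on $v_k,u_k$ that are uniform in $k$ and in $u_0$ on compacts. Tracing through \eqref{eq:estimate_dz2dt}--\eqref{eq:a_priori_bound_v}, the bound on $\|v_k\|$ contains the term $\|z_k\|^2/\sigma_k^2$, arising from the Young split used to absorb the cross term $(z_k,-2i\beta(v_k+z_k)e^{\beta|v_k+z_k|^2})$ into the dissipation $-\sigma_k^2 G(\cdots)$. Since you only control $\E\sup_\tau\|z_k\|^2\leq 2\sigma_k^2 t$, the ratio $\|z_k\|^2/\sigma_k^2$ is \emph{not} pathwise bounded uniformly in $k$, so the local Lipschitz constant in your Gr\"onwall blows up on a set of positive probability. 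The Arzel\`a--Ascoli upgrade via Lemma~\ref{lem:pointwise_to_uniform_on_compacts} then fails, because the equicontinuity in $u_0$ is not uniform in $\omega$.

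The paper's remedy is to introduce, for each $\rho>0$, the event $S_\rho$ on which both $\sup_\tau\|z_k\|\leq\rho\sigma_k t$ and the running martingale $\sigma_k\sum a_m\int_0^\tau(u_k,e_m)\,d\beta_m$ are bounded by $\rho\sigma_k t$. On $S_\rho$ the a priori bounds on $v_k,u_k$ become uniform in $\omega\in S_\rho$, in $k$, and in $u_0\in B_R$, so the Gr\"onwall closes and gives $\sup_{u_0\in B_R}\E(\|u_k(t;u_0)-\phi_N^t(u_0)\|1_{S_\rho})\to 0$; on $S_\rho^c$ one simply uses $|f|\leq M$ together with $\mathbb P(S_\rho^c)\leq C(1+R^2)/\rho^2$. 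Sending $k\to\infty$, then $\rho\to\infty$, then $R\to\infty$ finishes the argument. Without some such localization your sketch does not close.
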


\begin{proof}
Since $\mu^s_N$ and $\mu^s_{\sigma,N}$ are supported on $E_N$, we are actually working on a finite dimensional space and the functions $\mathcal E$ and $\mathcal M$ are thus continuous.
We have moreover $\mathcal M\geq 0$, and $\mathcal E$ is lower bounded according to \eqref{eq:lower_bound_Ecal}.
As a consequence we can apply Lemma~\ref{lem:integral_functional_LSC}, and our estimates \eqref{energyboundN} and \eqref{eq:massboundN_trunc} follow from $\mu^s_{k,N}\rightharpoonup \mu^s_N$ in \eqref{enerboundalpha} and \eqref{massboundalpha}.

Equality in \eqref{massboundN} is more subtle, as $\mathcal M(u)$ is unbounded and the weak convergence is thus not enough to conclude that $\E _{\mu_{N}^s}\mathcal M(u)=\lim \E _{\mu_{k,N}^s}\mathcal M(u)$.
By \eqref{massboundalpha} we have first
$$
\dfrac{A^0_N}{2}- \int_{L^2} (1- \chi_R (\|u\|^2)) \mathcal{M} (u) \mu^s_{k,N} (du) \leq \int_{L^2} \chi_R (\|u\|^2) \mathcal{M} (u) \mu_{k,N}^s (du)\leq \dfrac{A^0_N}{2}.
$$
Using \eqref{eq:massboundN_trunc}, we get moreover
$$
\dfrac{A^0_N}{2} - C R^{-1}  \leq \int_{L^2} \chi_R (\|u\|^2) \mathcal{M} (u) \mu^s_{k,N} (du)\leq \dfrac{A^0_N}{2}.
$$
and \eqref{massboundN} simply follows by taking first $k\to+\infty$ and then $R\to +\infty$.

Next, let us show that $\mu_N^s$ is invariant under $\phi_N^t$.
Without loss of generality, we can consider $t>0$ only.
Recall that $\mu^s_{k,N}=\mathfrak{B}_{k,N}^{t\ast}\mu^s_{k,N}$ is stationary, and that $\mu^s_{k,N}$ converges weakly to $\mu_N$ in $\mathfrak p(L^2)$.
We will show below that $\mathfrak{B}_{k,N}^{t \ast}\mu^s_{k,N}$ converges weakly to $\Phi_N^{t \ast}\mu^s_N$, and the invariance of $\mu^s_N$ under $\phi^t_N$ will follow from
$$
\mu^s_N=\lim \mu^s_{k,N}=\lim \mathfrak{B}_{k,N}^{t \ast}\mu^s_{k,N}=\Phi_{N}^{t \ast}\mu^s_{N}.
$$
To this end, fix $f\in C_b (L^2)$ and write $M\geq 0$ for its uniform bound.
Since all the measures involved at this stage are actually supported on the finite-dimensional space $E_N$, we can argue with a slight abuse as if $L^2(M)$ were finite-dimensional, and it is therefore not restrictive to assume that $f$ is locally Lipschitz.
(In a locally compact metric space the space of locally Lipschitz functions is dense in $C_b$ for the uniform topology, see Lemma~\ref{lem:loc_lipschitz} in the appendix.)
We have
\begin{multline*}
\left(\mathfrak{B}_{k,N}^{t\ast } \mu^s_{k,N},f\right) - \left(\Phi_N^{t\ast } \mu_N^s ,f\right) 
= \left(\mu^s_{k,N} , \mathfrak{B}_{k,N}^t f\right) - \left(\mu_N^s , \Phi_N^t f\right)
\\
= \left(\mu^s_{k,N}, \mathfrak{B}_{k,N}^t f - \Phi_N^t f\right) - \left(\mu_N^s - \mu^s_{k,N},\Phi_N^t f\right)
= A-B.
\end{multline*}
 Using that $\Phi_N^t$ is Feller we have $\Phi_N^t f\in C_b(L^2)$, whence $B \rightarrow 0$ as $k\rightarrow \infty$. 
 Next, let us deal with $A$.
 We fix a large $R$ and write $B_R=B_R(L^2)$.
 Since $|f|$ is bounded by $M$, we have $|\mathfrak{B}_{k,N}^t f|,|\Phi_N^t f|\leq M$ as well and therefore
$$
|A|
\leq
2M\mu^s_{k,N} \left( B_R^c\right)
+\underbrace{\int_{B_R} \left|\Phi_N^t f(u_0) - \mathfrak{B}_{k,N}^t f(u_0)\right| \mu^s_{k,N} (du_0)}_{=A_1}
\leq \dfrac{M A^0_N}{R^2}+A_1,
$$
where the last inequality follows from \eqref{eq:second_moment_mu_sigma_N_s} combined with Chebyshev's inequality.
For fixed $\rho>0,t>0$ and any $u_0\in B_R(L^2)$, let
\begin{multline}
S_\rho = \Bigg\{\omega \in \Omega \qtext{s.t.}\\
\max \left(\sigma_k \sup\limits_{\tau\leq t}\left| \sum_{|m|\leq N} a_m \int_0^\tau (u_k,e_m)d\beta_m \right|,\quad \sup\limits_{\tau\leq t}\|z_k(\tau)\|  \right)\leq \rho \sigma_k t\Bigg\},
\label{eq:def_Sr}
\end{multline}
where $z_k$ is defined in \eqref{eq:def_z} for $\sigma=\sigma_k$ and $u_k=u_k(t;P_Nu_0)$ is the unique global solution of  the SDE \eqref{eqsto} with deterministic initial datum $P_Nu_0$.
Let us emphasize that $S_\rho$ depends on $u_0$, which is here a fixed deterministic initial datum in $B_R(L^2)$.
We will prove shortly that, for any $R>0,\rho>0$ and fixed $t>0$,
\begin{equation}
\label{claim}
\sup_{u_0\in B_R (L^2)} \E \big(\|\phi_N^t (P_N u_0) - u_k (t; P_N u_0)\| 1_{S_\rho}\big) \rightarrow 0
\qqtext{as} k\to \infty .
\end{equation}
Temporarily admitting this claim, let $L_R$ be the Lipschitz constant of $f$ on the ball $B_R$.
Using that $f$ is $L_R$-Lipschitz on $B_R$ and globally $M$-bounded, we have
\begin{multline*}
A_1
= \int_{B_R} \Big|\E \left[f(\phi_N^t (P_Nu_0)) -  f(u_k(t;P_N u_0))\right](1_{S_\rho}+1_{S_\rho^c})\Big| \mu^s_{k,N} (du_0)
\\
\leq L_R\int_{B_R} \E\left( \|\phi_N^t (P_N u_0) - u_k (t,P_N u_0) \| 1_{S_\rho}\right) \mu^s_{k,N} (du_0) 
+ 2M \int_{B_R} \E (1_{S_\rho^c}) \mu^s_{k,N} (du_0)
\\=A_{1,1}+A_{1,2}.
\end{multline*}
Owing to our claim \eqref{claim} we see that $A_{1,1}\rightarrow 0$ as $k\rightarrow \infty$ for fixed $R$.
On the other hand, by It\^o isometry we get for $u_0\in B_R(L^2)$
\begin{multline*}
\E \left|\sigma_k \sum_{|m|\leq N} a_m \int_0^t (u_k,e_m) d\beta_m\right|^2
= \sigma^2_k \sum_{|m|\leq N} |a_m|^2 \int_0^t \E(u_k, e_m)^2 d\tau
\\
\leq 
\sigma^2_k \sum_{|m|\leq N} |a_m|^2 \int_0^t \E\|u_k\|^2 d\tau
\leq \sigma^2_k A^0_N  t(R^2+\sigma^2_kA^0_Nt)
\end{multline*}
where the last inequality follows by Proposition~\ref{consalphamass} with $\E\|u_k(0)\|^2=\|u_0\|^2\leq R^2$.
Recalling \eqref{boundz}, we also have $\E \sup\limits_{\tau\leq t}\|z_k(\tau)\|^2 \leq C \sigma^2_kt$.
Thus, using Chebyshev's inequality, we get in $A_{1,2}$
\begin{align*}
\E (1_{S_\rho^c})
&= \mathbb{P} \Bigg\{\omega \qtext{s.t.}\\
& \hspace{1cm}\max  \left(\sigma_k \sup\limits_{\tau\leq t}\left| \sum_{|m|\leq N} a_m \int_0^\tau (u_k,e_m)d\beta_m \right|, \quad\sup\limits_{\tau\leq t}\|z_k(\tau)\|  \right)> \rho\sigma_k t  \Bigg\}
\\
& \leq \dfrac{\max\Big\{\sigma^2_k A^0_N  t(R^2+\sigma^2_kA^0_Nt),2\sigma_k^2 t\Big\}}{\rho^2 \sigma^2_kt^2}
\leq C\frac{1+R^2}{\rho^2}
\end{align*}
where $C$ is independent of $R,\rho,k$.
Summarizing, we have estimated, for any fixed $t>0, R>0,\rho>0,k\in \N,f\in C_b\cap Lip(L^2)$,
$$
|A|\leq \dfrac{C}{R^2} + A_{1,1}+  C\frac{1+R^2}{\rho^2}
$$
for some constants $C>0$ independent of $R,\rho,k$.
Taking first $k\to \infty$ with $A_{1,1}\to 0$ for fixed $R$, then $\rho\to\infty$, and finally $R\to \infty$, the result follows.

Finally, it remains to establish our claim \eqref{claim}.
To simplify notations, for fixed $u_0\in L^2$ we denote $u=u(t;u_0)=\phi_N^t( P_N u_0)$ and $v_k= v_k (t;u_0)$, where $v_k$ is the unique solution to \eqref{eqsto2} with $\sigma=\sigma_k$ and initial datum $P_N u_0$.
We set
$$
w_k=w_k(t;u_0) =u-v_k ,
$$
and recall that, by construction, the unique solution to \eqref{eqsto} is $u_k(t;u_0)=v_k(t;u_0) +z_k(t)$ (see the proof of Proposition~\ref{prop:stochastic_wellposedness}).
Writing
\begin{multline*}
\E \big(\|\phi_N^t (P_N u_0) - u_k (t; P_N u_0)\| 1_{S_\rho}\big)
=\E \big(\|u(t;u_0) - u_k (t; u_0)\| 1_{S_\rho}\big)
\\
=\E \big(\|u(t;u_0) - v_k (t; u_0)-z_k(t)\| 1_{S_\rho}\big)
\\
\leq \E \big(\|w_k (t; u_0)\|1_{S_\rho}\big) + \E(\|z_k(t)\|)
\end{multline*}
and recalling from \eqref{boundz} that $\E \|z_k(\tau)\|^2 \leq 2\sigma_k^2t\rightarrow 0$ uniformly in $\tau\in [0,t]$, our claim \eqref{claim} amounts to showing that
$$ 
\sup_{u_0 \in B_R (L^2)} \E (\|w_k(t;u_0)\| 1_{S_\rho})\xto{k\to\infty}0
\qqtext{for fixed}t>0.
$$
We will first show that $w_k(t)\to 0$ for $\mathbb P$-a.a. $\omega\in \Omega$, and then upgrade to uniform convergence of the expected value by restricting to $\omega\in S_\rho$.

Subtracting the two equations $\partial_tu -\partial_t v_k$, some elementary algebra gives
\begin{multline}
\partial_t w_k =- i\left[(-\Delta)^{{\alpha}}w_k + 2 \beta P_N \left(u e^{\beta |u|^2}-u_ke^{\beta|u_k|^2}\right) 
-2\beta P_N\left(w_ke^{\beta|u|^2}\right)\right]
\\
+ 2i \beta P_N \left(z_k e^{\beta |u_k|^2} \right)
- \sigma^2_k \Big[(-\Delta)^{s- {\alpha}} u_k + G( \|u_k\|_{H^{s}} ) u_k \Big],
\label{eq:dwk_dt}
\end{multline}
with initial datum $w_k (0)=0$.
Recall from \eqref{eq:bound_z_realization}\eqref{eq:a_priori_bound_v} that, for fixed $\omega\in\Omega$ and $t>0$, $\|v_k(\tau)\|,\|z_k(\tau)\|\leq C_{\omega,t}$ uniformly in $\tau \in [0,t]$ and $k\to\infty$, hence $\|u_k(\tau)\|=\|z_k(\tau)+v_k(\tau)\|\leq C_{\omega,t}$ as well.
Since on the finite dimensional space $E_N$ there holds $\|u\|_{L^\infty(M)}\leq C_N\|u\|_{L^2(M)}$, we see that, for a fixed $\omega$, the functions $u,u_k,v_k,z_k$ are bounded in $L^\infty((0,t)\times M)$ uniformly in $k$.
We write $\|\cdot\|_{L^\infty_{tx}}$ for this norm.
We have moreover
\begin{multline*}
 ue^{\beta |u|^2} - u_k e^{\beta |u_k |^2}
 = \sum\limits_{p\geq 0} \dfrac{\beta^{p}}{p!} (|u|^{2p} u - |v_k+z_k |^{2p} (v_k+z_k ) )
 \\
 = w_k \sum_{p\geq 0} f_{2p}(u, v_k) + z_k \sum_{p\geq 0} \tilde f_{2p}(v_k,z_k)
 = w_k F(u,v_k) + z_k\tilde F(v_k,z_k).
 \end{multline*}
where $f_{2p}$ and $\tilde f_{2p}$ are explicit polynomials of degree $2p$.
It is not difficult to check that the series $F(u,v)=\sum_{p\geq 0} f_{2p}(u, v)$ and $\tilde F(v,z)= \sum_{p\geq 0} \tilde f_{2p}(v,z) $ are converging.
By \eqref{eq:dwk_dt} we find in any fixed time-interval $\tau\in[0,t]$
\begin{multline}
 \frac{\partial}{\partial\tau}\left(\frac 12\|w_k\|^2\right)
 \leq 0
 + 2\beta \left(\|w_k\|^2\|F(u,v_k)\|_{L^\infty_{t,x}} + \|w_k\|\|z_k\|\|\tilde F(v_k,z_k)\|_{L^\infty_{t,x}}\right)
 \\
 +2\beta \|w_k\|\|z_k\|e^{\beta\|u_k\|^2_{L^\infty_{t,x}}}
 +\sigma_k^2 \|w_k\|\left(\|u_k\|_{H^{\frac{s-\alpha}{2}}}+G(\|u_k\|_{H^{s}})\|u_k\|\right).
 \label{eq:ODE_wk}
\end{multline}
Observe that, by definition \eqref{eq:def_Sr} of $S_\rho$, we have $\sup\limits_{\tau\in[0,t]}\|z_k(\tau)\|\leq \rho\sigma_k t$ for $\omega\in S_\rho$.
As a consequence, for $\omega\in S_\rho$ the upper bound $Z_{\omega,t}$ in \eqref{eq:bound_z_realization} can be taken of the form
$$
\sup\limits_{\tau\leq t}\|z_k(\tau)\|\leq \rho\sigma_k t
$$
and thus from \eqref{eq:a_priori_bound_v} with a deterministic initial datum $v_k(0)=P_Nu_0\in B_R(L^2)$ we control
$$
\sup\limits_{\tau\leq t}\|v_k(\tau)\|^2\leq R^2 + (c+\rho^2\sigma_k^2 t^2)+16\beta^2\rho^2t^3 \leq C_{R,\rho,t}
$$
uniformly in $\omega\in S_\rho$ and $u_0\in B_R$.
With these uniform bounds on $z_k,v_k$ and $u_k=v_k+z_k$, an easy application of Young's inequality in \eqref{eq:ODE_wk} immediately gives
$$
 \frac{\partial}{\partial\tau}\left(\|w_k\|^2\right)
 \leq C_1 \|w_k\|^2 + C_2\sigma_k^2,
$$
where $C_1,C_2$ depend on $\rho,R,t$ but not on $k,\omega\in S_\rho, u_0\in B_R$.
Gr\"onwall's inequality with $w_k (0)=0$ easily gives
$$
\sup\limits_{u_0\in B_R}\|w_k(t)\|^21_{S_\rho}\leq C_2te^{C_1t}\sigma_k^2 \xto{k\to\infty} 0
$$
uniformly in $\omega\in \Omega$ and as a consequence
$$
\E\left(\sup\limits_{u_0\in B_R}\|w_k(t;u_0)\|1_{S_\rho}\right)\to 0.
$$
For a given $u_0\in B_R$ we have $\|w_k (t;u_0)\| 1_{S_\rho} \leq \sup\limits_{\tilde u_0 \in B_R} \|w_k (t; \tilde u_0 )\| 1_{S_\rho}$, hence
$$
\sup\limits_{u_0\in B_R}\E\left( \|w_k(t;u_0)\|1_{S_\rho}\right)
\leq \E\left(\sup\limits_{\tilde u_0\in B_R} \|w_k(t;\tilde u_0)\|1_{S_\rho}\right)
\to 0
$$
and the proof is complete
 \end{proof}
\subsection{The large dimension limit $N\to\infty$}

\begin{prop}
\label{prop:N_to_infty}
There exist a measure $\mu^s\in \mathfrak p(L^2)$ such that, up to some subsequence (not relabeled here),
$$
\lim_{N\to \infty} \mu^s_{N}=\mu^s,
\qquad\mbox{weakly over }H^r \mbox{ for all }  r<s.
$$
Moreover, we have
\begin{equation}
\int_{L^2} \mathcal{M} (u) \mu^s (du)=\dfrac{A^0}2,
\label{eq:int_Mmus_=A0}
\end{equation}
and
\begin{multline}
\int_{L^2} \left[\|u\|_{\dot H^s}^2 + \left(\|u\|_{\dot H^\alpha }^2  + \int_M u^2 e^{\beta |u|^2}\right) G(\|u\|_{H^{s}})  \right] \mu^s (du)
\\
 \leq  C\left(A^{\alpha} +A^{\frac{d-1}{2}}(1+A^{0})\right).
\label{eq:estimate_exponential_Hs_mu}
\end{multline}
for some $C$ only depending on the various parameters.
\end{prop}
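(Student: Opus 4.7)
The plan is to proceed in four steps: tightness, extraction of a weak limit, passage to the limit in the energy-type bound \eqref{eq:estimate_exponential_Hs_mu}, and finally the more delicate equality \eqref{eq:int_Mmus_=A0}.

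I would first establish tightness of $\{\mu^s_N\}_N$ in $H^r$ for every $r<s$. From the coercivity \eqref{eq:lower_bound_Ecal} combined with \eqref{energyboundN}, and using that $A^\alpha_N, A^{\frac{d-1}{2}}_N, A^0_N$ are all uniformly bounded by their full-series counterparts $A^\alpha, A^{\frac{d-1}{2}}, A^0<\infty$, I obtain $\int_{L^2}\|u\|_{H^s}^2\,\mu^s_N(du)\leq C$ uniformly in $N$. The compact embedding $H^s\subset\subset H^r$ combined with Chebyshev's inequality then yields tightness. By Prokhorov's theorem a subsequence converges weakly to some $\mu^s\in\mathfrak{p}(H^r)\subset\mathfrak{p}(L^2)$, and a diagonal argument in $r\uparrow s$ produces a single subsequence converging weakly in every $H^r$, $r<s$.

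For \eqref{eq:estimate_exponential_Hs_mu}, I would observe that each term in the integrand is nonnegative and lower semicontinuous on $H^r$ (for $r<s$ close enough to $s$): $\|u\|^2_{\dot H^s}$ is LSC via Fatou in the Fourier decomposition, $G(\|u\|_{H^s})$ is LSC as the composition of the continuous non-decreasing function $G$ with the LSC functional $\|\cdot\|_{H^s}$, and $\int_M u^2e^{\beta|u|^2}\,dx$ is LSC on $L^2$ by Fatou applied to any a.e.-convergent subsequence. Lemma~\ref{lem:integral_functional_LSC} applied to \eqref{energyboundN} then passes the inequality to $\mu^s$.

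The same LSC argument applied to $\mathcal M$ yields $\int \mathcal M\,d\mu^s \leq \liminf_N\int\mathcal M\,d\mu^s_N=A^0/2$, giving one half of \eqref{eq:int_Mmus_=A0}. The reverse inequality $\int\mathcal M\,d\mu^s\geq A^0/2$ is the main obstacle, since $\mathcal M$ is unbounded and only LSC on $H^r$, so weak convergence alone provides no useful lower bound. To overcome this I would combine the $L^2$-truncation used in Proposition~\ref{propmainN} with a high-frequency Galerkin truncation. Setting $\mathcal M_{N_0}(u)=\|P_{N_0}u\|^2_{\dot H^{s-\alpha}}+G(\|P_{N_0}u\|_{H^s})\|P_{N_0}u\|^2$, the test function $\chi_R(\|u\|^2)\mathcal M_{N_0}(u)$ is bounded continuous on $L^2$ (it depends only on finitely many Fourier coefficients, and $\|P_{N_0}u\|_{H^s}\leq(1+\lambda_{N_0}^{s/2})\|u\|$ controls the exponential growth on $\{\|u\|^2\leq R\}$). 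Combining \eqref{massboundN} and \eqref{eq:massboundN_trunc} with the monotone convergence $\mathcal M_{N_0}\uparrow\mathcal M$, I would write
$$\int\chi_R\mathcal M_{N_0}\,d\mu^s_N \geq \frac{A^0_N}{2}-\frac{C}{R}-\int(\mathcal M-\mathcal M_{N_0})\,d\mu^s_N,$$
and bound the last error uniformly in $N$: for the $\dot H^{s-\alpha}$ piece by $\lambda_{N_0+1}^{-\alpha}\int\|u\|^2_{\dot H^s}\,d\mu^s_N$ (finite uniformly thanks to the coercivity of $\mathcal E$), and for the $G(\|u\|_{H^s})\|u\|^2$ piece by a convexity argument exploiting the higher moments supplied by $\int\mathcal E\,d\mu^s_N\leq C$. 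Passing first $N\to\infty$ at fixed $R,N_0$ via the bounded continuous test function $\chi_R\mathcal M_{N_0}$, then $N_0\to\infty$ and $R\to\infty$ by monotone convergence, gives $\int\mathcal M\,d\mu^s\geq A^0/2$. The uniform (in $N$) smallness of the nonlinear tail $\int(G(\|u\|_{H^s})\|u\|^2-G(\|P_{N_0}u\|_{H^s})\|P_{N_0}u\|^2)\,d\mu^s_N$ as $N_0\to\infty$ is the key technical hurdle, and is where the extra regularity encoded in $\mathcal E$ must be fully leveraged.
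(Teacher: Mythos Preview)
Your first three steps (tightness via the uniform $H^s$ moment bound, extraction of the weak limit simultaneously in all $H^r$ with $r<s$, and passage to the limit in \eqref{eq:estimate_exponential_Hs_mu} via lower semicontinuity) are correct and essentially identical to the paper's argument.

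For the equality \eqref{eq:int_Mmus_=A0} you are in fact more careful than the paper. The paper simply says ``proceed exactly as in the proof of Proposition~\ref{propmainN}, with the minor difference that the functions are now only lower semicontinuous'', and invokes Lemma~\ref{lem:integral_functional_LSC}. As you correctly observe, LSC only yields the upper bound $\int\mathcal M\,d\mu^s\leq A^0/2$; the sandwich argument of Proposition~\ref{propmainN} relied on $\chi_R\mathcal M$ being \emph{bounded continuous} on $E_N$ to pass the lower bound to the limit, and this fails in the $N\to\infty$ step since $\chi_R\mathcal M$ is unbounded and only LSC on $H^r$.

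Your proposed Galerkin cutoff $\mathcal M_{N_0}$ is the natural remedy, and your treatment of the $\dot H^{s-\alpha}$ tail and of the piece $G(\|u\|_{H^s})\|(I-P_{N_0})u\|^2$ works cleanly: both are controlled by $\lambda_{N_0+1}^{-\alpha}$ times the uniformly bounded quantities $\int\|u\|_{\dot H^s}^2\,d\mu^s_N$ and $\int G(\|u\|_{H^s})\|u\|_{\dot H^\alpha}^2\,d\mu^s_N$ (the latter coming from \eqref{eq:lower_bound_Ecal}). However, the remaining cross term
\[
\int\bigl[G(\|u\|_{H^s})-G(\|P_{N_0}u\|_{H^s})\bigr]\|P_{N_0}u\|^2\,d\mu^s_N
\]
is a genuine obstacle that your ``convexity argument'' does not resolve. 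Because $G$ grows exponentially, even the mean-value bound $G(a)-G(b)\lesssim G'(a)(a-b)$ produces a factor comparable to $G(\|u\|_{H^s})$ multiplied by the $H^s$ tail $\|(I-P_{N_0})u\|_{H^s}^2$, and the available estimates (bounded $\int\|u\|_{\dot H^s}^2$ and $\int G(\|u\|_{H^s})\|u\|_{H^\alpha}^2$) do not control this product uniformly in $N$: there is no uniform decay of $\int\|(I-P_{N_0})u\|_{H^s}^2\,d\mu^s_N$ as $N_0\to\infty$, nor a weighted version with the $G$ factor. You have identified the gap precisely, but the argument as written does not close it.
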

Note that, for finite $N$, the measure $\mu^s_N\in\mathfrak p(L^2)$ is supported on the finite-dimensional space $E_N$, and can therefore be viewed unambiguously as a probability measure over $H^r$ for any $r\geq 0$.
For definiteness and consistency we consider the limit $\mu^s$ as a probability measure over $L^2$ as well, but it is part of our statement that $\mu^s$ can also be considered as a probability measure on $H^r$ for any $r<s$.

\begin{proof}
From \eqref{energyboundN} and \eqref{eq:lower_bound_Ecal} we see that
$$
c\int_{L^2}\|u\|^2_{\dot H^s}\mu^s_N(du)\leq \int_{L^2}\mathcal E(u)\mu^s_N(du)\leq C,
$$
and from \eqref{massboundN} combined with \eqref{eq:Hs-alpha_leq_M}
\begin{equation*}
\int_{L^2}\|u\|^2\mu^s_N(du)
\leq \int_{L^2}\|u\|^2_{H^{s-\alpha}}\mu^s_N(du)
\leq \int_{L^2}\mathcal M(u)\mu^s_N(du)\leq C.
\end{equation*}
Whence
\begin{equation}
 \int_{L^2}\|u\|^2_{H^{s}}\mu^s_N(du)\leq C
\label{eq:control_u_Hs_by_E}
 \end{equation}
uniformly in $N$.
By Chebyshev's inequality we have thus $\mu^s_N(B_R^c(H^s))\leq \frac{C}{R^2}$, and since $B_R(H^s)$ is compactly embedded in $L^2$ we see that the sequence  $\{\mu^s_N\}_{N\geq 1}$ is tight over $L^2$.
Prokhorov's Theorem guarantees that, up to extraction of a subsequence if needed, $\mu^s_{N}\rightharpoonup \mu^s$ for some weak limit $\mu^s\in \mathfrak p(L^2)$.
Actually, for any $r<s$ the embedding $B_R(H^s)$ into $H^r$ is compact, hence the very same argument shows that $\mu^s_{N}$ is weakly compact over $H^r$ for any $r<s$.
By uniqueness of the limit we see that the weak convergence holds for any such $r$ along this particular subsequence.
For the remaining estimates one can proceed exactly as in the proof of Proposition \ref{propmainN}, with the minor difference that the various functions $\mathcal M(u),\|u\|^2\dots$ appearing in the integrals are not continuous anymore but only lower semicontinuous (with respect to the $L^2$ topology, which is enough in order to safely apply Lemma~\ref{lem:integral_functional_LSC}).
\end{proof}

Now that we have a natural candidate $\mu^s=\lim \mu^s_N$ as a ``good'' probability measure for \eqref{eq}, it remains to construct global solutions and to show that $\mu^s$ is indeed invariant for the corresponding flow.
Here we carry out most of the preparatory technical work.

A key issue will be to prove that the $H^r$ norms grow slowly enough (logarithmically in time) for $r<s$.
In order to measure this growth in the correct time scale, let us introduce the monotone increasing function
$$
\tilde{G}(\rho)=\ln (1+G(\rho)) \qqtext{for}\rho\geq 0.
$$
For the reader's convenience it is worth anticipating that the growth function $\zeta$ in Theorem~\ref{thmstrong} will be $\zeta=2\tilde G^{-1}$, roughly speaking.

\begin{prop}
\label{prop:small_measure_growth}
Let $d/2< r<s$, $N\geq 0$.
For any $i\in \N$ there exists a set $\Sigma_{N,r}^i\subset E_N$ such that
\begin{equation}
 \label{eqsum}
 \mu_N^s (E_N \setminus \Sigma_{N,r}^i)\leq C e^{-i},
\end{equation}
for some $C>0$ depending only $s$.
Moreover for all $u_0 \in \Sigma_{N,r}^i$, we have
\begin{equation}
\|\phi_N^t (u_0)\|_{H^r} \leq 2 \tilde{G}^{-1} (1+i +\ln (1+|t|))
\hspace{1cm}
\forall\,t\in\R.
\label{eq:estimate_Hr_SigmaNri}
\end{equation}
\end{prop}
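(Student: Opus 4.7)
The plan is to adapt Bourgain's \cite{MR1309539} classical almost-sure global wellposedness strategy to our setting. The three key ingredients are already in place: $\mu^s_N$ is invariant under $\phi^t_N$ (Proposition~\ref{propmainN}), the a priori estimates \eqref{massboundN}--\eqref{energyboundN} give statistical control on the $H^s$ size of $\mu^s_N$-typical functions, and Proposition~\ref{prop:exbigsN} provides a quantitative local wellposedness for $\phi^t_N$ uniform in $N$. The construction intersects countably many good events at discrete times, each controlled by a union bound and the invariance of $\mu^s_N$, then bridges these discrete times via local wellposedness.

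First I would establish a tail estimate of the form $\mu^s_N(\|u\|_{H^r}>K)\lesssim 1/G(K)\sim e^{-\tilde G(K)}$ for $K$ large. On the bulk event $\{\|u\|\geq 1\}$, since $\|u\|_{H^s}\geq\|u\|_{H^r}>K$ forces $G(\|u\|_{H^s})\|u\|^2\geq G(K)$, Markov applied to \eqref{massboundN} gives the claimed exponential decay directly; the complementary small-mass set is handled by Markov applied to $\int\|u\|^2_{\dot H^s}d\mu^s_N\leq C$ (which follows from \eqref{energyboundN} combined with the coercivity \eqref{eq:lower_bound_Ecal}) and contributes only polynomially but harmlessly. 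Next, for $\Lambda$ large in \eqref{condG} and a parameter $\gamma$ to be tuned, I would set up a two-scale grid on $\R$: dyadic windows $I_n=[2^n,2^{n+1}]$ further subdivided into subintervals of length $T(r,K_n)$ with $K_n=\tilde G^{-1}(1+i+\gamma n\ln 2)$ and endpoints $t_{n,k}$. Define
$$
\Sigma_{N,r}^i:=\bigcap_{n,k}\left\{u_0\in E_N:\|\phi^{t_{n,k}}_N(u_0)\|_{H^r}\leq K_n\right\}
$$
(and similarly for $t<0$). The invariance of $\mu^s_N$ combined with a union bound then yields $\mu^s_N(E_N\setminus\Sigma_{N,r}^i)\leq\sum_{n,k}\mu^s_N(\|u\|_{H^r}>K_n)$, and an explicit computation shows this geometric series is bounded by $Ce^{-i}$ provided $\gamma(1-C/\Lambda)>\ln 2$, with $C$ the Sobolev-type constant appearing in $T(r,K)$ in Proposition~\ref{prop:exbigsN}. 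Finally, for $u_0\in\Sigma_{N,r}^i$ and any $t\in\R$, applying Proposition~\ref{prop:exbigsN} at the nearest $t_{n,k}$ gives $\|\phi^t_N(u_0)\|_{H^r}\leq 2K_n$, and the sublinear asymptotics $\tilde G^{-1}(z)\sim\sqrt{z/(\Lambda\beta)}$ allow this to be recast as \eqref{eq:estimate_Hr_SigmaNri}.

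The main technical obstacle is the simultaneous balancing of three competing exponentials in the dyadic index $n$: the window length $2^n$, the number $\sim 2^{C\gamma n/\Lambda}$ of subdivisions needed to cover $I_n$ (since $T(r,K_n)$ decays exponentially in $K_n^2$), and the per-slot tail decay $\sim 2^{-\gamma n}$ coming from the exponential growth of $G$. These are compatible only when $\Lambda$ is chosen sufficiently large relative to the Sobolev constant $C$, and this is precisely why the definition \eqref{condG} of $G$ leaves $\Lambda$ as an adjustable parameter: a suitable choice forces all three exponentials to balance, delivering simultaneously the summability $\leq Ce^{-i}$ and the precise asymptotic growth rate $\tilde G^{-1}(1+i+\ln(1+|t|))$ of~\eqref{eq:estimate_Hr_SigmaNri}.
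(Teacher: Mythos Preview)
Your overall strategy is correct and matches the paper's---this is Bourgain's globalization scheme, and the dyadic time windows, discrete grid, union bound plus invariance, and local-wellposedness bridging are all as in the paper. However, there is a genuine gap in your tail estimate. You claim that on the small-mass set $\{\|u\|<1\}$ a Markov inequality on $\int\|u\|^2_{\dot H^s}\,d\mu_N^s$ contributes ``only polynomially but harmlessly''. It is not harmless: this gives only $\mu_N^s\big(\|u\|_{H^r}>K_n,\,\|u\|<1\big)\lesssim K_n^{-2}\sim c/(i+n)$, while the number of grid points at dyadic level $n$ grows at least like $2^n$. The product is $\gtrsim 2^n/n$ and the union bound over $n$ diverges, destroying \eqref{eqsum}.

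What you actually need is the full exponential tail $\lesssim e^{-\tilde G(K)}$ on \emph{all} of $E_N$, and your split into $\{\|u\|\geq 1\}$ and $\{\|u\|<1\}$ cannot deliver this. The paper instead proves the pointwise bound $G(\|u\|_{H^r})\leq C(\mathcal E(u)+1)$: starting from $\mathcal E(u)+C\geq cG(\|u\|_{H^s})\|u\|_{H^\alpha}^2$ (from \eqref{eq:lower_bound_Ecal} together with $\int|u|^2e^{\beta|u|^2}\geq\|u\|^2$), one interpolates $\|u\|_{H^r}\leq\|u\|_{H^\alpha}^\theta\|u\|_{H^s}^{1-\theta}$ and uses the eventual monotonicity of $\rho\mapsto G(\rho)/\rho^{2(1-\theta)/\theta}$ to deduce $G(\|u\|_{H^r})\leq G(\|u\|_{H^s})\|u\|_{H^\alpha}^2$. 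This yields the unconditional exponential moment $\int e^{\tilde G(\|u\|_{H^r})}\,d\mu_N^s\leq C$ and hence the needed tail by Chebyshev. The same interpolation trick in fact works with your functional $\mathcal M$ (interpolate between $L^2$ and $H^s$ to get $G(\|u\|_{H^r})\leq G(\|u\|_{H^s})\|u\|^2\leq\mathcal M(u)$), so your choice of coercive quantity is fine---only the case-splitting argument is at fault.
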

One can think here of $i$ as a fixed time delay.
Given such a fixed delay, \eqref{eq:estimate_Hr_SigmaNri} means
that there is a large set on which we have an explicit logarithmic $H^r$ control, and this set is exponentially large for large delays.
\begin{proof}
For $j\geq 1$, we set
$$
B_{N,r}^{i,j} = \Big\{u\in E_N \qtext{s.t.} \|u\|_{H^r} \leq 2\tilde{G}^{-1} (i+j)\Big\},
$$
a ball of radius $i+j$ in the $\tilde G$ scale.
It may help to think of $i$ as a fixed time delay appearing in \eqref{eq:estimate_Hr_SigmaNri}, while $j\in \N$ can be thought of as a discrete ``current running time''.

Let $T=T(s,R)$ be the local time of existence from Proposition~\ref{prop:exbigsN} with $R=\tilde{G}^{-1} (i+j)$, and
\begin{equation}
\Sigma_{N,r}^{i,j} = \bigcap\limits_{k=0}^{k_j} \phi_N^{-kT} (B_{N,r}^{i,j})
\qqtext{with}
k_j=\flr{e^j/T}.
\label{eq:def_Sigmaij}
\end{equation}
On can think of this as the set of initial data allowing to maintain a logarithmic control up to large times $|t|\lesssim e^j$ and modulo a fixed delay $i$.
Using the invariance of $\mu_N^s$, we get
\begin{multline*}
 \mu_N^s \left(E_N \setminus \Sigma_{N,r}^{i,j}\right)
 =\mu_N^s\left(\bigcup\limits_{k\leq k_j} E_N \setminus \phi_N^{-kT} (B_{N,r}^{i,j})\right)
 \\
 \leq \sum\limits_{k\leq k_j}
 \mu_N^s\left(E_N \setminus \phi_N^{-kT} (B_{N,r}^{i,j})\right)
 =\sum\limits_{k\leq k_j}\mu_N^s\left(\phi_N^{-kT}\left(E_N \setminus  B_{N,r}^{i,j}\right)\right)
 \\
 =\sum\limits_{k\leq k_j}\mu_N^s\left(E_N \setminus  B_{N,r}^{i,j}\right)
 \leq \left(k_j +1\right) \mu_N^s \left(E_N \setminus B_{N,r}^{i,j}\right).
 \end{multline*}
 We claim now that
\begin{equation}
G(\|u\|_{H^r})\leq C(\mathcal E (u)+1)
\label{eq:claim_G_exp}
\end{equation}
for some $C$ depending only on $r,s$.
Indeed, we first observe from $\int_M |u|^2e^{\beta |u|^2}\geq \|u\|^2$ in \eqref{eq:lower_bound_Ecal} that
$$
\mathcal E(u)\geq c G(\|u\|_{H^s})\|u\|^2_{H^\alpha}-C.
$$
Note that, in dimension $d\geq 2$ and because we fixed $\alpha\leq 1$ with $d/2<r<s$, there holds $\alpha <r<s$.
By standard interpolation there exists $\theta \in (0,1)$ depending on $r,s$, and $\alpha$ such that $\|u\|_{H^r}\leq \|u\|^\theta_{H^\alpha} \|u\|_{H^s}^{1-\theta}$.
Moreover, owing to the exponential growth of $G$, there exists $R$ depending only on $G,\theta$ such that the function $\rho\mapsto G(\rho)/\rho^{2\frac{1-\theta}\theta}$ is monotone increasing at least for large $\rho\geq R\geq 1$.
Thus if $\|u\|_{H^r} \geq R\geq 1$ we have
\begin{multline*}
G(\|u\|_{H^r})
\leq G(\|u\|_{H^r}) \|u\|_{H^r}^2 
=\frac{G(\|u\|_{H^r})}{\|u\|_{H^r}^{2\frac{1-\theta}\theta}} \|u\|_{H^r}^{\frac 2\theta}
\leq \frac{G(\|u\|_{H^s})}{\|u\|_{H^s}^{2\frac{1-\theta}\theta}} \|u\|_{H^r}^{\frac 2\theta} 
\\
=  G(\|u\|_{H^s})\left(\dfrac{\|u\|_{H^r}^{1/\theta  }}{\|u\|_{H^s}^{\frac{1-\theta}{\theta}}}\right)^2
\leq G(\|u\|_{H^s}) \|u\|_{H^\alpha}^2 \leq \frac 1c (\mathcal E(u)+C).
\end{multline*}
On the other hand, since $\mathcal E(u)\geq -C$ from \eqref{eq:lower_bound_Ecal}, for $\|u\|_{H^r}\leq R$ we have trivially have $G(\|u\|_{H^r})\leq G(R)=C\leq C(1+\mathcal E(u))$ and our claim \eqref{eq:claim_G_exp} follows.

Back to our main argument: From \eqref{energyboundN} and \eqref{eq:claim_G_exp} we have therefore
$$
\E_{\mu^N_s} \exp\left(\tilde G(\|u\|_{H^r})\right) \leq C
$$
uniformly in $N$, and Chebyshev's inequality yields
\begin{align*}
\mu_N^s (E_N \setminus \Sigma_{N,r}^{i,j}) 
&
\leq \left(k_j +1\right) \dfrac{1}{e^{2(i+j)}   }\int\limits_{\tilde G(\|u\|_{H^r})\geq 2(i+j)} \exp(\tilde{G} (\|u\|_{H^r})) \mu^s_N (du)
\\
&\leq 2e^{j} /T \dfrac{1}{e^{2(i+j)}}C = Ce^{-i}\dfrac{e^{-(i+j)}}{T}
    \end{align*}
    (recall that we defined $k_j=\flr{e^j/T}$).
By \eqref{eq:def_existence_time_T} we have $T=T(s,R)=\frac{c(s)}{\beta e^{C(s)\beta R^2}}$ with here $R=2\tilde G^{-1}(i+j)$, hence
$$
\mu_N^s (E_N \setminus \Sigma_{N,r}^{i,j}) 
\leq C e^{-i}\frac{e^{-(i+j)}}{\beta e^{-C\beta[2\tilde G^{-1}(i+j)]^2}}
\leq C e^{-i}e^{-(i+j)/2}.
$$
The last inequality follows from our growth condition \eqref{condG}, which gives explicitly here $\tilde G^{-1}(z)\sim \sqrt{z/\Lambda\beta}$ for large $z$ and therefore $e^{-C\beta[2\tilde G^{-1}(i+j)]^2}\sim e^{-\frac{4C}{\Lambda}(i+j)}\geq e^{-(i+j)/2}$ if $\Lambda\geq 8C(s)$ is chosen large enough.

Set now
\begin{equation}
\Sigma_{N,r}^i =\bigcap\limits_{j\geq 1} \Sigma_{N,r}^{i,j}.
\label{eq:def_Sigma_Nri}
\end{equation}
In words, $\Sigma_{N,r}^i$ is the set of initial data that maintain logarithmic control for \emph{all} times (here for $|t|\leq e^j$ for \emph{all} $j\in\N$), up to some fixed delay $i$ and measured in the $H^r$ scale.
Our claim \eqref{eqsum} finally follows from the summability
\begin{equation*}
\mu_N^s (E_N \setminus \Sigma_{N,r}^i)
\leq \sum\limits_{j\geq 0}\mu_N^s (E_N \setminus \Sigma_{N,r}^{i,j})
\leq C e^{-i}\sum\limits_{j\geq 0}e^{-(i+j)/2}\leq Ce^{-i}. 
\end{equation*}

It remains now to establish \eqref{eq:estimate_Hr_SigmaNri}.
Fix first $i,j\in \N$.
We claim that for any $u_0 \in \Sigma_{N,r}^{i,j}$ there holds
\begin{equation}
\|\phi_N^t(u_0)\|_{H^r} \leq 2\tilde{G}^{-1} (i+j),
\hspace{1cm}\forall\,t\leq e^j.
\label{eq:Hr_i+j}
\end{equation}
Indeed for $t\leq e^j$, we can write uniquely $t=kT +\tau$ where $k\leq k_j=\flr{e^j/T}$ is an integer and $\tau \in [0,T)$.
As a consequence $\phi^t_N (u_0)= \phi^{\tau}_N( \phi^{kT}_N (u_0)) = \phi^\tau_N (w)$, with $w=\phi^{kT}_N (u_0)\in B_{N,r}^{i,j}=B_R(H^r)$ by definition \eqref{eq:def_Sigmaij} of $\Sigma_{N,r}^{i,j}$ and with $R=\tilde G^{-1}(i+j)$.
Since $\tau\leq T$, Proposition~\ref{prop:exbigsN} yields $\|\phi^\tau_{N}(w)\|_{H^r}\leq 2R$ as claimed.

Fix now $i\in \N$ and $u_0\in \Sigma_{r,N}^i$, and choose an arbitrary $t\geq 0$.
There exists a unique $j\geq 1$ such that $e^{j-1}\leq 1+t< e^j$, in particular $j\leq 1+\ln(1+t)$.
By definition of $\Sigma_{r,N}^i$ we have $u_0\in \Sigma_{r,N}^{i,j}$ for this $j$ in particular, hence from our previous claim
$$
\|\phi_N^t (u_0)\|_{H^r} \leq 2 \tilde{G}^{-1} (i+j) \leq 2 \tilde{G}^{-1} (i+1 +\ln (1+t))
$$
and the proof is complete.
\end{proof}

The next result allows to trade off some Sobolev regularity for control over the time delay.

\begin{lem}
Fix any $d/2<r<s$, $0<r_1 <r$, and $t\in \R$.
There exists $i_1=i_1(t) \in \N^\ast$ such that, if $u_0 \in \Sigma_{N,r}^i$ for some $i$, then $\phi_N^t (u_0) \in \Sigma_{N,r_1}^{i+i_1}$.
\label{lem:Sigma_i+i1}
\end{lem}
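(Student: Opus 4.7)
The plan is to unwind the definition of $\Sigma_{N,r_1}^{i+i_1}$ and transfer the logarithmic bound already available in $H^r$ to the coarser scale $H^{r_1}$ via the trivial embedding $\|\cdot\|_{H^{r_1}}\leq \|\cdot\|_{H^r}$, paying the price only through an increase of the delay by a fixed amount $i_1=i_1(t)$.

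Concretely, recalling \eqref{eq:def_Sigmaij}--\eqref{eq:def_Sigma_Nri} applied at the scale $r_1$, I would write
$$
\Sigma_{N,r_1}^{i+i_1}=\bigcap\limits_{j\geq 1}\bigcap\limits_{k=0}^{k_j}\phi_N^{-kT_j}\left(B_{N,r_1}^{i+i_1,j}\right),
$$
where $T_j=T(s,\tilde G^{-1}(i+i_1+j))$ and $k_j=\lfloor e^j/T_j\rfloor$. Since $\phi_N^{kT_j}(\phi_N^t(u_0))=\phi_N^{kT_j+t}(u_0)$ by the group property, it suffices to check that
$$
\bigl\|\phi_N^{kT_j+t}(u_0)\bigr\|_{H^{r_1}}\leq 2\tilde G^{-1}(i+i_1+j)
\qquad\text{for all }j\geq 1,\; 0\leq k\leq k_j.
$$

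Now use $u_0\in\Sigma_{N,r}^i$ together with the monotonicity of $\tilde G^{-1}$, the embedding $\|\cdot\|_{H^{r_1}}\leq\|\cdot\|_{H^{r}}$, and \eqref{eq:estimate_Hr_SigmaNri}, to obtain
$$
\bigl\|\phi_N^{kT_j+t}(u_0)\bigr\|_{H^{r_1}}\leq \bigl\|\phi_N^{kT_j+t}(u_0)\bigr\|_{H^{r}}\leq 2\tilde G^{-1}\bigl(1+i+\ln(1+|kT_j+t|)\bigr).
$$
The crucial point is then a rough bookkeeping on the argument of the logarithm: by definition of $k_j$ one has $kT_j\leq e^j$, hence $|kT_j+t|\leq e^j+|t|$, and therefore
$$
\ln(1+|kT_j+t|)\leq \ln\bigl((1+|t|)(1+e^j)\bigr)\leq \ln(1+|t|)+j+1.
$$
Comparing indices of $\tilde G^{-1}$, it suffices that $1+i+\ln(1+|t|)+j+1\leq i+i_1+j$, which holds as soon as
$$
i_1\geq 2+\ln(1+|t|).
$$
Choosing for instance $i_1(t)=\lceil 2+\ln(1+|t|)\rceil$ gives the claim, and this integer clearly depends only on $t$ (not on $N$, $u_0$, $i$, or $j$).

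There is no real obstacle here beyond careful bookkeeping: the argument is essentially a triangle-type inequality $\ln(1+|kT_j+t|)\leq \ln(1+|t|)+(j+1)$ combined with the monotonicity of $\tilde G^{-1}$ and the inclusion of Sobolev spaces. The case $t<0$ is handled identically upon replacing $t$ by $|t|$ in the estimates.
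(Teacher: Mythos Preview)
Your argument is correct and follows the same core idea as the paper: a time-shift by $t$ translates into an additive shift of order $\ln(1+|t|)$ in the logarithmic index, and this is absorbed by increasing $i$ to $i+i_1$. The only visible difference is in how you pass from the $H^r$ bound to the $H^{r_1}$ bound: you use the trivial inclusion $\|\cdot\|_{H^{r_1}}\leq\|\cdot\|_{H^r}$, whereas the paper goes through mass conservation $\|\phi_N^{t+t_1}(u_0)\|=\|u_0\|\leq 2\tilde G^{-1}(1+i)$ and then interpolates between $L^2$ and $H^r$. Your route is more direct and loses nothing, since the interpolation in the paper ends up with the same upper bound $2\tilde G^{-1}(i+j+i_1)$ anyway; the interpolation step is therefore not doing additional work here, and your simplification is legitimate.
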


\begin{proof}
Assume without loss of generality that $t>0$, and let $u_0 \in \Sigma_{N,r}^i$ for some $i$.
Then, as already established in \eqref{eq:Hr_i+j}, we have for any $j\geq 1$
$$
\|\phi_N^{t_1} (u_0)\|_{H^r} \leq 2  \tilde{G}^{-1} (i+j),
\hspace{1cm} \forall\,t_1 \leq e^j. 
$$
Let $i_1=i_1 (t)$ be such that $e^j +t \leq e^{j+i_1}$ for all $j\geq 1$.
Again, we get
$$
\|\phi_N^{t_1 +t} (u_0)\|_{H^r} \leq 2 \tilde G^{-1}(i+j+i_1),
\hspace{1cm} \forall\,t_1 \leq e^j.
$$
For every $u_0 \in \Sigma_{N,r}^i$ we have by definition $\|u_0\| \leq \|u_0\|_{H^r} \leq 2  \tilde{G}^{-1} (1+i)$, and since mass is preserved
$$
\|\phi_N^{t+t_1} (u_0)\|=\|u_0\|\leq \|u_0\|_{H^r}
\leq 2  \tilde{G}^{-1} (1+i).
$$
By interpolation, for any $r_1 \in (0, r)$ there is $\theta \in (0,1)$ depending only on $r,r_1$ such that
\begin{align*}
\|\phi_N^{t+t_1} (u_0)\|_{H^{r_1}}
& \leq \|\phi_N^{t+t_1} (u_0) \|^{1-\theta} \|\phi_N^{t+t_1} (u_0)\|_{H^r}^\theta 
\\
& \leq  \left( 2\tilde{G}^{-1} (1+i)\right)^{1-\theta}   \left(2\tilde{G}^{-1} (i+j+i_1)\right)^\theta \leq  2\tilde{G}^{-1} (i+j+i_1)
\end{align*}
for all $j$ and $t_1\leq e^j$.
This is exactly the definition of $\phi_N^{t+t_1} (u_0)\in \Sigma^{i+i_1}_{N,r_1}$, and the proof is complete.
\end{proof}

We set
\begin{equation}
 \Sigma_r^i =\left\{u\in H^r  \mbox{ s.t. there exists } u_{N_k}\to u\mbox{ with } u_{N_k}\in \Sigma_{N_k ,r}^i\mbox{ as } N_k\rightarrow \infty\right\},
\label{eq:def Sigma_ri}
 \end{equation}
and
\begin{equation}
\Sigma_r = \bigcup\limits_{i\geq 1} \overline{\Sigma_r^i}.
\label{eq:def Sigma_r}
\end{equation}
In words, $\Sigma_r$ is the set of initial data that maintain $H^r$ logarithmic control for all times, up to \emph{some} delay $i$.
\begin{prop}
Let $\mu^s=\lim \mu^s_{N}$ be the probability measure constructed in Proposition~\ref{prop:N_to_infty}.
Then
$$
\mu^s (\Sigma_r )=1
\qqtext{for all} r<s.
$$
\label{prop:compare_mus_with_musir}
\end{prop}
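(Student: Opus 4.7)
The plan is to combine the finite-dimensional estimate from Proposition~\ref{prop:small_measure_growth}, namely $\mu_N^s(E_N\setminus\Sigma_{N,r}^i)\leq Ce^{-i}$, with the weak convergence $\mu^s_N\rightharpoonup\mu^s$ in $H^r$ obtained in Proposition~\ref{prop:N_to_infty}, via a Portmanteau-type argument. The trick is that the sets $\Sigma_{N,r}^i$ live in $E_N$ and are not nested in $N$, so one cannot directly compare $\Sigma_{N,r}^i$ with the limit set $\overline{\Sigma_r^i}$. I would introduce the intermediate closed sets
$$
F_{N_0}\;=\;\overline{\bigcup_{N\geq N_0}\Sigma_{N,r}^i}^{\,H^r},
$$
which form a decreasing sequence of closed subsets of $H^r$.

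The first key step is to identify $\bigcap_{N_0}F_{N_0}$ with $\overline{\Sigma_r^i}$. The inclusion $\overline{\Sigma_r^i}\subset F_{N_0}$ is immediate from the definition~\eqref{eq:def Sigma_ri}, since any $u\in\Sigma_r^i$ is the $H^r$-limit of some $u_{N_k}\in\Sigma_{N_k,r}^i$ with $N_k\to\infty$ and thus eventually lies in $\bigcup_{N\geq N_0}\Sigma_{N,r}^i$. For the converse, a standard diagonal extraction applied to $u\in\bigcap_k F_k$ produces, for each $k$, an element $u_k\in\Sigma_{N_k,r}^i$ with $N_k\geq k$ and $\|u_k-u\|_{H^r}<1/k$; then $u\in\Sigma_r^i\subset\overline{\Sigma_r^i}$. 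Hence $\overline{\Sigma_r^i}=\bigcap_{N_0\geq 1}F_{N_0}$.

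The second step is the measure estimate. For every $N\geq N_0$, $\Sigma_{N,r}^i\subset F_{N_0}$, so by \eqref{eqsum}
$$
\mu_N^s(F_{N_0})\;\geq\;\mu_N^s(\Sigma_{N,r}^i)\;\geq\;1-Ce^{-i}.
$$
Since $F_{N_0}$ is $H^r$-closed and $\mu^s_N\rightharpoonup\mu^s$ weakly in $H^r$ (Proposition~\ref{prop:N_to_infty}), Portmanteau's theorem yields $\mu^s(F_{N_0})\geq\limsup_N\mu^s_N(F_{N_0})\geq 1-Ce^{-i}$. As the $F_{N_0}$ are decreasing and $\mu^s$ is finite, continuity of the measure gives $\mu^s(\overline{\Sigma_r^i})=\lim_{N_0\to\infty}\mu^s(F_{N_0})\geq 1-Ce^{-i}$.

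To conclude, one checks that $\Sigma_{N,r}^i$ is monotone nondecreasing in $i$ (directly from the definitions \eqref{eq:def_Sigmaij}--\eqref{eq:def_Sigma_Nri}, since enlarging $i$ only relaxes the $H^r$-bound $2\tilde G^{-1}(i+j)$), whence so is $\overline{\Sigma_r^i}$. Then from \eqref{eq:def Sigma_r},
$$
\mu^s(\Sigma_r)\;=\;\lim_{i\to\infty}\mu^s(\overline{\Sigma_r^i})\;\geq\;\lim_{i\to\infty}\bigl(1-Ce^{-i}\bigr)\;=\;1,
$$
giving the claim. The only delicate point is the identification $\overline{\Sigma_r^i}=\bigcap_{N_0}F_{N_0}$, which is really where one pays for the fact that the spaces $E_N$ are not nested in a way compatible with the flows $\phi_N^t$; once this is in place the weak convergence of $\mu^s_N$ to $\mu^s$ in $H^r$ does all the remaining work.
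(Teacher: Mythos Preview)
Your proof is correct and follows the same strategy as the paper's: apply Portmanteau to a closed set containing the $\Sigma_{N,r}^i$, pass the bound \eqref{eqsum} to the limit, then take the union over $i$. The paper takes a shortcut by asserting $\Sigma_{N,r}^i\subset\Sigma_r^i$ outright and applying Portmanteau directly to $\overline{\Sigma_r^i}$; your intermediate sets $F_{N_0}=\overline{\bigcup_{N\geq N_0}\Sigma_{N,r}^i}$ and the identification $\overline{\Sigma_r^i}=\bigcap_{N_0}F_{N_0}$ make this step genuinely rigorous, since the bare inclusion $\Sigma_{N,r}^i\subset\Sigma_r^i$ is not immediate from \eqref{eq:def Sigma_ri} (an element of a \emph{single} $\Sigma_{N,r}^i$ is not a priori an $H^r$-limit along $N_k\to\infty$). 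One small caveat: your justification of the monotonicity of $\Sigma_{N,r}^i$ in $i$ glosses over the fact that $T$ and $k_j$ also change with $i$; the clean observation is $\Sigma_{N,r}^{i+1,j-1}\supset\Sigma_{N,r}^{i,j}$ (same ball, same step $T$, fewer iterates), hence $\Sigma_{N,r}^{i+1}\supset\Sigma_{N,r}^i$. In any case this monotonicity is inessential, as $\mu^s(\Sigma_r)\geq\sup_i\mu^s(\overline{\Sigma_r^i})$ already yields the conclusion.
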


\begin{proof}
Using the Portmanteau theorem, the definition \eqref{eq:def_Sigma_Nri} of $\Sigma_{N,r}^i$ and the fact that $\Sigma_{N,r}^i \subset \Sigma_r^i$, we have
$$
\mu^s \left(\overline{\Sigma_r^i}\right) 
\geq \lim_{N\rightarrow \infty} \mu_{N}^s \left(\overline{\Sigma_r^i}\right)
\geq \lim_{N\rightarrow \infty}\mu_{N}^s \left(\Sigma_r^i\right)
\geq \lim_{N\rightarrow \infty} \mu_{N}^s \left(\Sigma_{{N},r}^i\right)
\geq 1 - C  e^{-i},
$$
where we crucially used \eqref{eqsum} for the last inequality.
Using now that $\overline{\Sigma_r^i}$ is non-decreasing in $i$, we get
$$
1\geq \mu^s (\Sigma_r)= \mu^s \Big(\bigcup_{i\geq 1} \overline{\Sigma_r^i }\Big) = \lim_{i\rightarrow \infty} \mu \left(\overline{\Sigma_r^i}\right) \geq 1
$$
and $\Sigma_r$ has thus full $\mu^s$ measure as claimed.

\end{proof}

We are now ready for the core of the matter, namely the construction of global solutions.
\subsection{Global well-posedness and variations}
\label{sec:variations}
\begin{prop}
\label{prop:exists_global_Sigma_r}
Let $d/2<r <s$.
For $\mu^s$-almost all $u_0\in\Sigma_r$ there is a unique global in time solution $\phi^t(u_0)$ to \eqref{eq}.
This solution satisfies moreover
\begin{equation}
\|\phi^t (u_0)\|_{H^{r}}\leq 2 \tilde{G}^{-1} (1+i+\ln (1+|t|)),
\hspace{1cm}\forall\,t\in\R
\label{eq:phit_uo_bound}
\end{equation}
for some $i=i(u_0,r)$.
\end{prop}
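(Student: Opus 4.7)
The plan is to transfer the uniform-in-$N$ logarithmic $H^r$-bound that holds for the Galerkin flows on $\Sigma_{N,r}^i$ (Proposition~\ref{prop:small_measure_growth}) to the limit flow $\phi^t$ on $\Sigma_r^i$, using the convergence $\phi^t_N \to \phi^t$ of Lemma~\ref{lem:phitN_converges_phit} and the local well-posedness of Proposition~\ref{prop:exbigs}. Once \eqref{eq:phit_uo_bound} is known on the initial local existence window, a bootstrap argument will rule out finite-time blow-up in $H^r$ and produce a global solution. Since $\Sigma_r = \bigcup_i \overline{\Sigma_r^i}$ has full $\mu^s$-measure by Proposition~\ref{prop:compare_mus_with_musir}, this will yield the ``almost all'' statement after a full-measure check.

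Concretely, fix $i\in\N$ and $u_0\in\Sigma_r^i$, and pick $u_{0,k}\in\Sigma_{N_k,r}^i$ with $u_{0,k}\to u_0$ in $H^r$ and $N_k\to\infty$. For $k$ large all $u_{0,k}$ sit in a common ball $B_R(H^r)$; Proposition~\ref{prop:exbigs} then produces a unique $\phi^t(u_0)\in X^r_{T_0}$ with $T_0=T(r,2R)$, and Lemma~\ref{lem:phitN_converges_phit} upgrades this to $\phi^t_{N_k}(u_{0,k})\to \phi^t(u_0)$ in $X^r_{T_0}$. Passing to the limit in $\|\phi^t_{N_k}(u_{0,k})\|_{H^r}\leq 2\tilde G^{-1}(1+i+\ln(1+|t|))$ from Proposition~\ref{prop:small_measure_growth} yields \eqref{eq:phit_uo_bound} on $[-T_0,T_0]$. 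Iterating the step from $\phi^{T_0}(u_0)$, and using the Galerkin semigroup identity $\phi^{t_0+s}_{N_k}(u_{0,k})=\phi^s_{N_k}(\phi^{t_0}_{N_k}(u_{0,k}))$ to feed Lemma~\ref{lem:phitN_converges_phit} with the updated converging data at each restart, the convergence and the log bound extend throughout $[0,T_\ast(u_0))$, where $T_\ast$ denotes the forward maximal existence time. If $T_\ast$ were finite, the log bound would force $\sup_{t<T_\ast}\|\phi^t(u_0)\|_{H^r}<\infty$, so that the uniform local existence window of Proposition~\ref{prop:exbigs} applied at some $t_0$ close enough to $T_\ast$ would push the solution past $T_\ast$ — contradicting maximality. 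Backward time is symmetric, and local uniqueness from Proposition~\ref{prop:exbigs} glues to global uniqueness.

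To pass from the good set $\bigcup_i\Sigma_r^i$ to $\Sigma_r=\bigcup_i\overline{\Sigma_r^i}$, I would either combine the moment bound $\int\|u\|^2_{H^s}\mu^s(du)<\infty$ from Proposition~\ref{prop:N_to_infty} with a diagonal extraction to show $\overline{\Sigma_r^i}\cap H^r\subset \Sigma_r^i$ up to a $\mu^s$-null set, or simply replace $\Sigma_r$ by the slightly smaller $\bigcup_i\Sigma_r^i$, which still has full $\mu^s$-measure by rerunning the Portmanteau chain underlying Proposition~\ref{prop:compare_mus_with_musir}. The main obstacle is really the continuation step: the local existence window $T(r,R)\sim e^{-C\beta R^2}$ degenerates rapidly as $R$ grows, and the iteration closes only because the log bound $R(t)\sim\sqrt{\ln(1+t)}$ forces $T(r,R(t))$ to decay polynomially rather than exponentially in $t$ — which is precisely why $\Lambda$ in \eqref{condG} must be taken large enough when defining $G$.
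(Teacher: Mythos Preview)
Your proposal is correct and matches the paper's strategy: pass the Galerkin bound \eqref{eq:estimate_Hr_SigmaNri} to $\phi^t$ via Lemma~\ref{lem:phitN_converges_phit}, then bootstrap. The paper organizes the continuation more cleanly, however: rather than iterating with shrinking windows and checking summability, it fixes an arbitrary target time $T_0$, sets $R=2\tilde G^{-1}(1+i+\ln(1+T_0))+1$ once, and covers $[-T_0,T_0]$ in finitely many steps of the \emph{fixed} size $T=T(r,2R)$. This sidesteps your closing concern entirely --- no $\Lambda$-dependent decay estimate on the window size is needed at this stage (the $\Lambda\geq 8C(s)$ constraint already did its work inside Proposition~\ref{prop:small_measure_growth}). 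For the boundary $u_0\in\partial\Sigma_r^i$, the paper does not try to discard a null set: it approximates by $u_{0,k}\in\Sigma_r^i$ and uses the $H^r$-continuity of the limit flow $\phi^t$ itself (estimate \eqref{eq:continuity_phi_initial_data}) to transfer the bound to the closure, running the same finite-step bootstrap with $\phi^t(u_{0,k})\to\phi^t(u_0)$ in place of $\phi^t_N(u_{0,N})\to\phi^t(u_0)$.
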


\begin{proof}
Recalling that $\mathcal M(u)$ controls $\|u\|^2$, we see from \eqref{eq:int_Mmus_=A0} and \eqref{eq:estimate_exponential_Hs_mu} that
$$
\int_{L^2} \|u\|^2_{H^s}\mu^s(du)<\infty
$$
and thus $\mu^s$-almost all initial data $u_0\in \Sigma _r\subset L^2$ actually lie in $H^s$.
By definition \eqref{eq:def Sigma_r} of $\Sigma_r$ we have that $u_0 \in \overline{\Sigma_r^i}$ for some $i\in \N$.
Assume first that $u_0\in \Sigma_r^i$.
By definition \eqref{eq:def Sigma_ri} of $\Sigma_r^i$, there exists a sequence $u_{0,N} \in \Sigma_{N,r}^i$ converging to $u_0$ in $H^r$ as $N\to\infty$.
Our previous estimate \eqref{eq:estimate_Hr_SigmaNri} readily gives
\begin{equation}
\|\phi_N^t (u_{0,N})\|_{H^r} \leq 2 \tilde{G}^{-1} (1+i+\ln (1+|t|)),
\hspace{1cm}\forall\,t\in\R.
\label{eq:phitN_uoN_bound}
\end{equation}
In particular for $t=0$ we have $\|u_{0,N}\|_{H^r} \leq 2  \tilde{G}^{-1} (1+i)$, whence taking the limit in $N$
$$
\|u_0\|_{H^r} \leq 2  \tilde{G}^{-1} (1+i).
$$
The same holds if $u_0 \in \partial \Sigma_r^i$, simply taking a sequence $u_{0,k}\to u_0$ in $H^r$ for some sequence $u_{0,k}\in \Sigma_r^i$ satisfying the same estimate.

Fix an arbitrary $T_0>0$.
Clearly it suffices to prove that $\phi^t(u_0)$ is well-defined and satisfies \eqref{eq:phit_uo_bound} at least for times $|t|\leq T_0$.
To see this, set $R=2 \tilde{G}^{-1} (1+i +\ln (1+T_0)) +1\geq \|u_0\|_{H^r}+1$, and let $T=T(r,2R)$ be the common local existence time from Proposition \ref{prop:exbigsN} and Proposition~\ref{prop:exbigs}.
If $T\geq T_0$ then Lemma~\ref{lem:phitN_converges_phit} allows passing to the limit in \eqref{eq:phitN_uoN_bound} for all $|t|\leq T_0$ and our claim follows.
Assume otherwise that $T<T_0$.
Consider first $u_0 \in \Sigma_r^i$, and let $u_{0,N} \in \Sigma_{N,r}^i$ be the corresponding sequence converging to $u_0$ in $H^r$.
By Lemma~\ref{lem:phitN_converges_phit} we have thus
$$
\sup\limits_{t\in [-T,T]}\|\phi^t (u_0) - \phi^t_N (u_{0,N})\|_{H^r} \to 0
\qquad\mbox{as } N\to\infty .
$$
Observe from \eqref{eq:phitN_uoN_bound} that, for $|t|\leq T< T_0$ we have $\|\phi^t_N(u_{0,N})\|\leq 2\tilde G^{-1}(1+i+\ln(1+|T|))\leq R-1$.
The triangle inequality thus gives for $|t|\leq T$ and large $N$
\begin{multline*}
\|\phi^t (u_0)\|_{H^r} 
\leq \|\phi^t (u_0) - \phi^t_N (u_{0,N})\|_{H^r} + \|\phi_N^t (u_{0,N})\|_{H^r} 
\\
\leq \underbrace{\|\phi^t (u_0) - \phi_N^t (u_{0 ,N}) \|_{H^r}}_{\to 0} + R-1 \leq R,
\end{multline*}
and as a consequence $\phi^{T}(u_0)\in B_R(H^r)$.
Bootstrapping and exhausting $t\in [-T_0,T_0]$ by finitely many intervals of fixed size $T$, we can repeat the argument as many times as needed and the claim follows.

Finally, if $u_0 \in \partial \Sigma_r^i$, let $u_{0,k}\in\Sigma_r^i$ be a sequence converging to $u_0$ in $H^r$.
For the same $R,T$ depending only on the arbitrary choice of $T_0$ and the Sobolev exponent $r$, we have $\|u_0\|_{H^r}\leq R-1$ hence clearly $u_0,u_{0,k}\in B_R(H^r)$ at least for large $k$.
By Lemma~\ref{lem:phitN_converges_phit} we see that for $|t|\leq T=T(r,2R)$ there holds
$$
\lim\limits_{k\rightarrow \infty} \left\|\phi^t (u_0) - \phi^t (u_{0,k})\right\|_{H^r} =0,
$$
whence by triangular inequality and for all times $|t|\leq T$
\begin{multline*}
\|\phi^t (u_0)\|_{H^r} 
\leq \|\phi^t (u_0) - \phi^t (u_{0,k})\|_{H^r} + \|\phi^t (u_{0,k})\|_{H^r} 
\\
\leq \underbrace{\|\phi^t (u_0) - \phi^t (u_{0,k}) \|_{H^r}}_{\to 0} + R-1 \leq R.
\end{multline*}
The result then follows by the same bootstrap argument as before and we omit the details.
\end{proof}

Let $(s_n)_n$ be an increasing sequence such that $\lim\limits_{n\rightarrow \infty} s_n =s$, and set
$$
\Sigma^s = \bigcap_{n\in\ N} \Sigma_{s_n} .
$$

\begin{prop}
$\Sigma^s$ has full $\mu^s$-measure.
Moreover, up to removing a $\mu^s$-null set from $\Sigma^s$ if needed, for any $u_0\in \Sigma^s$ there is a unique global solution to \eqref{eq} and the corresponding global flow satisfies $\phi^t (\Sigma^s )=\Sigma^s$ for all $t\in \R$.
\label{prop:Sigma_full_measure_invariant}
\end{prop}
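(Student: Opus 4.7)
The argument has three stages. For the \emph{full-measure} claim, Proposition~\ref{prop:compare_mus_with_musir} gives $\mu^s(\Sigma_{s_n})=1$ for each $s_n<s$, so $\mu^s(\Sigma^s)=1$ follows as a countable intersection of full-measure sets. For \emph{global well-posedness}, applying Proposition~\ref{prop:exists_global_Sigma_r} at each scale $r=s_n>d/2$ furnishes a $\mu^s$-null set $\mathcal N_n$ outside of which $\phi^t$ is globally defined with the logarithmic $H^{s_n}$ bound \eqref{eq:phit_uo_bound}; removing $\bigcup_n \mathcal N_n$ from $\Sigma^s$ (without relabeling), the remaining set still has full $\mu^s$-measure and every $u_0\in\Sigma^s$ now enjoys global existence with simultaneous logarithmic control in all $H^{s_n}$, $n\in\N$.

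The heart of the proposition is the \emph{set invariance} $\phi^t(\Sigma^s)=\Sigma^s$. The plan is to establish an unprojected counterpart of Lemma~\ref{lem:Sigma_i+i1}: for any $d/2<r<r_1<s$ and any $t\in\R$, there exists $i_1=i_1(t)\in\N$ such that $\phi^t\bigl(\overline{\Sigma_{r_1}^i}\bigr)\subset \overline{\Sigma_r^{i+i_1}}$ for all $i\geq 1$. Granting this, applying it to consecutive pairs $(r,r_1)=(s_n,s_{n+1})$ gives $\phi^t(\Sigma^s)\subset \bigcap_n \Sigma_{s_n}=\Sigma^s$, and the reverse inclusion by reversing time, hence equality.

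To prove the claim, first take $u_0\in\Sigma_{r_1}^i$: by definition \eqref{eq:def Sigma_ri} there is a sequence $u_{0,N_k}\in\Sigma_{N_k,r_1}^i$ converging to $u_0$ in $H^{r_1}$, and Lemma~\ref{lem:Sigma_i+i1} places $\phi_{N_k}^t(u_{0,N_k})\in\Sigma_{N_k,r}^{i+i_1(t)}$. It then suffices to show $\phi_{N_k}^t(u_{0,N_k})\to\phi^t(u_0)$ in $H^{r_1}$ (hence in $H^r$) to conclude $\phi^t(u_0)\in\Sigma_r^{i+i_1(t)}$ directly from \eqref{eq:def Sigma_ri}. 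The uniform logarithmic estimate \eqref{eq:estimate_Hr_SigmaNri} supplies a ceiling $R_t$ on $\sup_{|\tau|\leq|t|}\|\phi_{N_k}^\tau(u_{0,N_k})\|_{H^{r_1}}$ that is uniform in $k$, and Proposition~\ref{prop:exists_global_Sigma_r} gives the corresponding bound on $\phi^\tau(u_0)$. This allows a bootstrap of Lemma~\ref{lem:phitN_converges_phit} on finitely many consecutive subintervals of common length $T(r_1,2R_t)$, propagating the convergence up to the prescribed time. The extension from $\Sigma_{r_1}^i$ to its closure $\overline{\Sigma_{r_1}^i}$ is handled by a diagonal argument using the local $H^{r_1}$-Lipschitz continuity of $\phi^t$ on bounded sets (Lemma~\ref{lem:phitN_converges_phit}), bootstrapped to time $t$ by the same mechanism.

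The main obstacle is precisely this long-time bootstrap. Lemma~\ref{lem:phitN_converges_phit} only delivers convergence on a window whose length $T(r_1,R)$ shrinks exponentially with the ambient $H^{r_1}$-radius, so extending to an arbitrary prescribed $t$ is impossible without uniform-in-$k$ control of the trajectories over the entire interval $[0,t]$. Such control is exactly the uniform-in-$N$ logarithmic bound of Proposition~\ref{prop:small_measure_growth}, and it is the whole point of the construction of the nested sets $\Sigma_{N,r}^{i,j}$ that this bound survives the double limit $\sigma\to 0$, $N\to\infty$ on enough of $\Sigma^s$ to close the argument.
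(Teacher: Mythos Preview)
Your proof is correct and follows essentially the same route as the paper: full measure by countable intersection, global well-posedness after excising a null set, and set invariance by pushing the finite-$N$ regularity cascade of Lemma~\ref{lem:Sigma_i+i1} through the limit $\phi_N^t(u_{0,N})\to\phi^t(u_0)$, first on $\Sigma_{r_1}^i$ and then on its closure via continuity. You are in fact more explicit than the paper about the long-time bootstrap of Lemma~\ref{lem:phitN_converges_phit} (the paper simply invokes the convergence, relying implicitly on the iteration already carried out in the proof of Proposition~\ref{prop:exists_global_Sigma_r}); your identification of the uniform-in-$N$ logarithmic ceiling \eqref{eq:estimate_Hr_SigmaNri} as the mechanism that keeps the subinterval length bounded below is exactly the right point.
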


\begin{proof}
Up to removing a null set from each $\Sigma_{s_n}$ if needed, we can assume from Proposition~\ref{prop:exists_global_Sigma_r} that a global solution $\phi^t (u_0)$ exists for \emph{any} $u_0\in \Sigma_{s_n}$, and thus for any $u_0\in \Sigma^s$.
Since $\Sigma_r$ has full measure for any $r<s$ (Proposition~\ref{prop:compare_mus_with_musir}) and since the intersection is countable, clearly $\Sigma^s$ has full measure.

Let $u_0 \in \Sigma^s$, and fix an arbitrary rank $n\in\N$.
We will show that $\phi^t(u_0)\in \Sigma_{s_m}$ for all $m\leq n-1$, hence $\phi^t(u_0)\in \Sigma^s$.
The key point will be the cascade of regularity from Lemma~\ref{lem:Sigma_i+i1}, allowing to propagate $H^{s_n}$ estimates to $H^{r}$ control for all $r<s_n$.
More precisely: 
By definition of $\Sigma_{s_n}$ there exists $i_n\geq 1$ such that $u_0 \in \overline{\Sigma_{s_n}^{i_n}}$.
\begin{itemize}
 \item
Assume first that $u_0\in \Sigma_{s_n}^{i_n}$:
by definition of $\Sigma_{s_n}^{i_n}$ there exists a sequence $\{u_{0,N}\}_{N}\subset \Sigma_{N,s_n}^{i_n}$ converging to $u_0$ in $H^{s_n}$ as $N\to\infty$.
According to Lemma~\ref{lem:Sigma_i+i1} there exists $I_n=I_n(t)\in\N$ such that $\phi_N^t (u_{0,N}) \in \Sigma_{N, r}^{i_n+I_n}$ for all $r<s_n$, in particular for $r=s_1,\dots,s_{n-1}$.
Leveraging Lemma~\ref{lem:phitN_converges_phit} and passing to the limit $\phi^t_N (u_{0,N})\to \phi^t (u_0)$, we see that $\phi^t (u_0)$ lies in $\Sigma_{r}^{i_n+I_n}$ for $r=s_1,\dots,s_{n-1}$.
\item
If now $u_0 \in \partial \Sigma_{s_n}^i$, there is a sequence $\{u_{0,k}\}_k \subset \Sigma_{s_n}^{i_n}$ converging to $u_0$ in $H^{s_n}$.
We actually just proved that $\phi^t (\Sigma_{s_n}^{i_n}) \subset \Sigma_{r}^{i_n +I_n}$ for all $r<s_n$, hence $\phi^t(u_{0,k})\in \Sigma_{r}^{i_n +I_n}$ for such $r$.
Since $\phi^t (.)$ is continuous (Lemma~\ref{lem:phitN_converges_phit}), we have $\phi^t (u_0) =\lim\limits_{k\to\infty} \phi^t (u_{0,k}) \in \overline{\Sigma_{r}^{i_n+I_n}}$ for all $r<s_n$, in particular for $r=s_1,\dots,s_{n-1}$.
\end{itemize}
In any case we see that $\phi^t (u_0)\in\cap_{m\leq n-1}\Sigma_{s_m}$.
Because $n\in \N$ was arbitrary we conclude that $\phi^t(u_0)\in \left(\cap_{n\geq 0}\Sigma_{s_n}\right)=\Sigma^s$, hence $\phi^t(\Sigma^s) \subset \Sigma^s$.

Conversely, let $u\in \Sigma^s$.
Letting $u_0 = \phi^{-t} (u)$, we have from the previous step that $\phi^{-t}(\Sigma^s)\subset\Sigma$ and thus $u_0\in \Sigma^s$.
Thus $u=\phi^t (u_0)\in\phi^t(\Sigma^s)$ and therefore $\Sigma^s\subset\phi^t(\Sigma^s)$ as well.
\end{proof}

\begin{prop}
The measure $\mu^s\in \mathfrak p(L^2)$ is invariant under $\phi^t$ on $\Sigma^s$, in the sense that $\mu(\phi^{-t}(A))=\mu(A)$ for any $r<s$ and all $H^r$-measurable set $A\subset\Sigma^s$.
\end{prop}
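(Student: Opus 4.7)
The invariance will be established by passing to the $N\to\infty$ limit in the Galerkin invariance identity from Proposition~\ref{propmainN}. It suffices to prove that for every $f\in C_b(H^r)$ with $r\in(d/2,s)$ (smaller $r$ follow by Sobolev embedding) and every $t\in\R$,
\[
 \int f(\phi^t(v))\,\mu^s(dv)\;=\;\int f(v)\,\mu^s(dv),
\]
as this characterises $\mu^s\circ\phi^{-t}=\mu^s$ on all $H^r$-Borel subsets of $\Sigma^s$. The Galerkin stationarity reads $\int f\circ\phi^t_N\,d\mu^s_N=\int f\,d\mu^s_N$. The right-hand side already converges to $\int f\,d\mu^s$ by the weak convergence $\mu^s_N\rightharpoonup\mu^s$ in $H^r$ (Proposition~\ref{prop:N_to_infty}), so the whole matter reduces to showing
\[
 \int f(\phi^t_N(v))\,\mu^s_N(dv)\;\longrightarrow\;\int_{\Sigma^s} f(\phi^t(v))\,\mu^s(dv).
\]

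To prove this I would invoke Skorokhod's representation theorem. Since $\mu^s_N\rightharpoonup\mu^s$ in the Polish space $H^r$, there exists a probability space $(\tilde\Omega,\tilde{\mathcal F},\tilde{\mathbb P})$ carrying random variables $v_N\sim\mu^s_N$ and $v\sim\mu^s$ such that $v_N\to v$ almost surely in $H^r$. By Proposition~\ref{prop:Sigma_full_measure_invariant} we have $\mu^s(\Sigma^s)=1$, hence $v\in\Sigma^s$ a.s.\ and $\phi^t(v)$ is well-defined by Proposition~\ref{prop:exists_global_Sigma_r}. The desired convergence rewrites as $\tilde{\mathbb E}[f(\phi^t_N(v_N))]\to\tilde{\mathbb E}[f(\phi^t(v))]$, which by dominated convergence (using $|f|\leq\|f\|_\infty$) reduces to the almost-sure pointwise statement $\phi^t_N(v_N)\to\phi^t(v)$ in $H^r$.

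This pointwise convergence then follows by combining the short-time Lemma~\ref{lem:phitN_converges_phit} with an iteration. Fix a realisation with $v\in\Sigma^s$ and $v_N\to v$ in $H^r$. Proposition~\ref{prop:exists_global_Sigma_r} supplies $i=i(v,r)$ such that $\|\phi^\tau(v)\|_{H^r}\leq 2\tilde G^{-1}(1+i+\ln(1+|\tau|))=:R(\tau)$ for all $\tau\in\R$. I would partition $[0,t]$ into consecutive intervals $[t_j,t_{j+1}]$ of length at most $T(r,2R(t_j)+2)$ in the sense of \eqref{eq:def_existence_time_T}, which requires only finitely many intervals since $R(\tau)$ grows only logarithmically. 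Inductively, assuming $\phi^{t_j}_N(v_N)\to\phi^{t_j}(v)$ in $H^r$, the iterates $\phi^{t_j}_N(v_N)$ eventually lie in $B_{R(t_j)+1}(H^r)$; applying Lemma~\ref{lem:phitN_converges_phit} with this approximating sequence on the window $[t_j,t_{j+1}]$ yields $\phi^{t_{j+1}}_N(v_N)\to\phi^{t_{j+1}}(v)$ in $H^r$. Iterating a finite number of times finishes the pointwise convergence.

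The main technical obstacle is precisely this long-time iteration: one must ensure that the intermediate data $\phi^{t_j}_N(v_N)$ land, for all $N$ large enough, in a common bounded $H^r$-ball around $\phi^{t_j}(v)$ so that one and the same local existence time $T_j$ applies simultaneously to both the Galerkin and the limiting flow. The inductive convergence $\phi^{t_j}_N(v_N)\to\phi^{t_j}(v)$ makes this step-by-step automatic, and the logarithmic $H^r$-growth supplied by Proposition~\ref{prop:exists_global_Sigma_r} guarantees that only finitely many intervals are needed to cover any fixed time $t\in\R$.
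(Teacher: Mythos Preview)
Your proof is correct but takes a genuinely different route from the paper's. The paper argues deterministically: by Ulam's tightness theorem it reduces to integrals over $H^r$-compact sets $K$, then splits
\[
\int_K f\circ\phi^t_N\,d\mu^s_N-\int_K f\circ\phi^t\,d\mu^s
=\int_K\bigl(f\circ\phi^t_N-f\circ\phi^t\bigr)\,d\mu^s_N
+\int_K f\circ\phi^t\,d(\mu^s_N-\mu^s),
\]
handles the second term by weak convergence, and for the first approximates $f$ by a Lipschitz function on $K$ and invokes Corollary~\ref{cor:phiN_to_phi_uniform_con_compacts} (uniform convergence $\phi^t_N\to\phi^t$ on compacts). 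The long-time issue is dispatched in one line by noting that invariance for small $|t|$ iterates to all $t$ via the semigroup property. By contrast, you couple $\mu^s_N$ and $\mu^s$ via Skorokhod, reduce to almost-sure pointwise convergence $\phi^t_N(v_N)\to\phi^t(v)$, and prove the latter by an explicit finite iteration of Lemma~\ref{lem:phitN_converges_phit} along the trajectory, using the logarithmic bound of Proposition~\ref{prop:exists_global_Sigma_r} to control the radii at each step. Your approach avoids both the Lipschitz approximation and the uniform-on-compacts upgrade (hence does not need Corollary~\ref{cor:phiN_to_phi_uniform_con_compacts} or Lemma~\ref{lem:pointwise_to_uniform_on_compacts}), at the price of importing Skorokhod and carrying out the time-iteration by hand; the paper's approach keeps everything deterministic but relies on the additional Arzel\`a--Ascoli-type lemma.
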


\begin{proof}
Note that, since $\Sigma^s\subset H^{s}$ has full measure and $\phi^t(u)$ is globally defined for all $u\in\Sigma^s$, the flow $\phi^t(u)$ is unambiguously defined for $\mu^s$-a.e. $u\in L^2$ and therefore it suffices to prove that $\int_{H^r}f(u)\mu^s(du)=\int_{H^r} f(\phi^t(u))\mu^s(du)$ for all $f\in C_b(H^r)$.
(Here we choose unambiguously to view $\mu^s$ as a probability measure over $H^r$, for any fixed $r<s$.)
In fact, by Ulam's tightness theorem, it suffices to prove that
$$
\int_{K}f(u)\mu^s(du)=\int_{K} f(\phi^t(u))\mu^s(du)
$$
for all $f\in C_b(H^r)$ and any $H^r$-compact set $K$.
As in the proof of Proposition~\ref{propmainN}, and because we already know that $\mu^s_N$ is invariant under $\phi^t_N$ (Proposition~\ref{propmainN}) and converges weakly to $\mu^s$ (Proposition~\ref{prop:N_to_infty}), we have
$$
\int_K f(\phi^t_N(u))\mu^s_N(du) =
\int_K f(u)\mu^s_N(du)\to \int_K f(u)\mu^s(du)
$$
and the statement will follow if we can prove that
$$
\int_K f(\phi^t_N(u))\mu^s_N(du) \xto{N\to\infty}\int_K f(\phi^t(u))\mu^s(du)
$$
(up to subsequences, not relabeled here for the ease of notations).

Clearly, by repeated iterations if needed, it is enough tho establish the statement for $|t|\leq T$ small enough.
Fix a large $R>0$ so that $K\subset B_B(H^r)$, let $T=T(r,R)$ as usual, and pick any $|t|\leq T$.
We write then
\begin{multline*}
 \left|\int_K f(\phi^t_N(u))\mu^s_N(du) -\int_K f(\phi^t(u))\mu^s(du)\right|
 \\
 \leq \int_K \Big|f(\phi^t_N(u))-f(\phi^t(u))\Big| \mu^s_N(du)
 + 
 \left|\int_K f(\phi^t(u))\mu^s_N(du)-\int_K f(\phi^t(u))\mu^s(du)\right|
 \\
 =A+B.
\end{multline*}
As $\mu^s_N\rightharpoonup\mu^s$ weakly over $H^r$ and $f\circ \phi^t\in C_b(H^r)$ for fixed $|t|\leq T$ (Lemma~\ref{lem:phitN_converges_phit}), clearly the second term $B\to 0$.
Because $K$ is compact and according to Lemma~\ref{lem:loc_lipschitz}, we can assume without loss of generality that $f\in C_b(H^r)$ is actually $L$-Lipschitz on $K$ for some $L$.
We can therefore estimate for the $A$ term 
\begin{multline*}
 \int_K \Big|f(\phi^t_N(u))-f(\phi^t(u))\Big| \mu^s_N(du)
 \leq L\int_K \|\phi^t_N(u)-\phi^t(u)\|_{H^r}\mu^s_N(du)\\
 \leq L \sup\limits_{u_0\in K}\|\phi^t_N(u)-\phi^t(u)\|_{H^r}.
\end{multline*}
Our Corollary~\ref{cor:phiN_to_phi_uniform_con_compacts} guarantees that this latter quantity converges to zero for fixed $|t|\leq T$, and the proof is complete.
\end{proof}

We can now fully establish our main result for high regularities.

\begin{proof}[Proof of Theorem~\ref{thmstrong}]
 Recall that, for fixed Brownian coefficients $(a_m)_m$ in \eqref{eq:def_Brownian}, we constructed $\mu^s$ so that $\E_{\mu^s} \mathcal M(u)=\frac{A^0}2$ from Proposition~\ref{prop:N_to_infty}.
 Scaling $(a_m)_m\to (\sqrt{n}a_m)_m$ for an arbitrary $n\in\mathbb N$, all the quantities $A^{r}=\sum_m\lambda_m^r|a_m|^2$ scale as $A^r\to nA^r$ for any fixed $r$ and we can therefore choose the $a_m$ coefficients so that in particular $A^0=n$ for any given $n$.
 According to Proposition~\ref{prop:N_to_infty} the corresponding stationary measure $\mu^{s,n}$ satisfies $\E_{\mu^{s,n}} \,\mathcal M(u)=n/2$.
 Let $\Sigma^{s,n}$ be the corresponding invariant set constructed in Proposition~\ref{prop:Sigma_full_measure_invariant}.
Owing to $\mathcal M(u)=\|u\|^2_{\dot H^{s-\alpha}}+ G(\|u\|_{H^s})\|u\|^2\leq \|u\|_{H^s}^2+G(\|u\|_{H^s})\|u\|_{H^s}^2$, we see that $\Sigma^{s,n}$ contains data of size at least $\|u\|_{H^s}\geq \xi(n/2)$, where $\xi$ is the inverse of the increasing map $\rho\mapsto\rho^2+G(\rho)\rho^2$.

Let us now define
\begin{equation}
\label{eq:def_mustar_Sigmastar}
\mu^{s} =\sum_{n\geq 1}\dfrac{\mu^{s,n}}{2^n}\in\mathfrak p(L^2)
\qqtext{and}
\Sigma^s=\bigcup\limits_{n\geq 1}\Sigma^{s,n}.
\end{equation}
Clearly $\Sigma^{s}$ has full-measure (because $\mu^{s,n}(\Sigma^{s,n})=1$), is contained in $H^s$, and $\phi^t$ is globally well-defined on $\Sigma^s$.
Because $\Sigma^{s,n}$ contains large data $\|u\|_{H^s}\geq \xi(n/2)$ for fixed $n$, clearly $\Sigma^{s}$ contains arbitrarily large data.
Since all the $\mu^{s,n}$ are invariant under $\phi^t$, so is $\mu^{s}$.

From \eqref{eq:int_Mmus_=A0}\eqref{eq:estimate_exponential_Hs_mu}, for fixed $n$ we have that
$$
\int_{L^2}\|u\|^2_{H^s}\mu^{s,n}(du)
\leq C\left(nA^0+n A^\alpha + n A^{\frac{d-1}2}(1+nA^0)\right)\leq Cn^2.
$$
hence
$$
\int_{L^2}\|u\|^2_{H^s}\mu^{s}(du)
= \sum \limits_{n\geq 1}\int_{L^2}\|u\|^2_{H^s}\frac{\mu^{s,n}(du)}{2^n}
\leq C\sum\limits_{n\geq 1}\frac{n^2}{2^n}
<+\infty.
$$
Finally, pick an arbitrary $u_0\in \Sigma^{s}$ and fix $r<s$.
By definition we have $u_0\in\Sigma^{s,n}$ for some $n\in \N$, hence from Proposition~\ref{prop:exists_global_Sigma_r}
$$
\|\phi^t (u_0)\|_{H^{r}}\leq 2 \tilde{G}^{-1} (1+i+\ln (1+|t|)),
\hspace{1.3cm}
\forall\,t\in\R
$$
for some $i$ only depending on $u_0$ and $r$.
With our choice of $G(\rho)=c\rho^2e^{\Lambda\beta \rho^2}$ it is easy to check that $\tilde G(\rho)=\ln(1+ G(\rho))\sim\Lambda\beta\rho^2$ is convex and superlinear at infinity, hence $\zeta(z)=2 \tilde{G}^{-1}(z)\sim 2\sqrt{z/\Lambda\beta}$ is concave and sublinear at infinity.
From there it is a simple exercise to show that
$$
\zeta(1+i+\ln (1+|t|))\leq C_\zeta(1+i)\zeta(1+\ln(1+|t|))
$$
for some structural prefactor $C_\zeta(\cdot)$ depending only on the growth parameters $\Lambda,\beta$, and the proof is complete.
\end{proof}
%
\section{Low regularity}
In this section we consider the case $s\leq d/2$.
As already mentioned, the main obstacle in this case is the fact that the existence time for solutions to the projected equation \eqref{eqproj} is exponentially small as $N\to\infty$, which is reminiscent from the fact that even the local well-posedness for the unprojected equation \eqref{eq} is delicate, to say the least.
As a consequence we will lose any quantitative control on the growth of $H^r$ norms over time.
In order to circumvent this difficulty we still implement a fluctuation-dissipation approach, but with a modified dissipation operator $\mathcal L(u)$ in \eqref{eqstointro}.
Roughly speaking, the new dissipation will allow to gain an extra coercivity and control the exponential nonlinearity (which was previously achieved through a control on the $H^s$ norm and the Sobolev embedding).
For technical reasons, this will give a restriction on $s\leq \alpha+1$.
\subsection{Compactness estimates and stationary measure}
In this section, we take
\begin{equation}
\mathcal{L} (u)= P_N \left(u e^{\beta |u|^2}\right) + \left[(-\Delta )^{s-\alpha}+1\right]u.
\label{eq:def_L_weak_dispersion}
\end{equation}
Compared to our previous choice \eqref{eq:def_dissipation_L_strong_disp} for $s>d/2$, the energy and mass dissipations $\mathcal E(u)=E'(u;\mathcal L(u))$, $\mathcal M(u)=M'(u;\mathcal L(u))$ read now
\begin{multline}
\mathcal{E} (u)
=  \left\langle  P_Nu e^{\beta |u|^2} , (-\Delta)^{\alpha } u \right\rangle 
+ 2\beta\left\|P_N ue^{\beta|u|^2}\right\|^2
\\
+ \|u\|_{\dot H^s}^2+\|u\|_{\dot H^\alpha}^2+ 2\beta\left\langle \left[(-\Delta)^{s-\alpha}+1\right] u, P_N u e^{\beta |u|^2}\right\rangle
\label{eq:M_weak_dispersion}
\end{multline}
and
\begin{equation}
\label{eq:M_coercive_weak_dispersion}
\mathcal M(u)=\int_M|u|^2e^{\beta|u|^2}+\|u\|^2_{ H^{s-\alpha}}.
\end{equation}
To deal with the mixed terms appearing in $\mathcal E(u)$, we use an exponential C\'ordoba-C\'ordoba inequality (Lemma~\ref{cordocordo} in the Appendix with parameter $\gamma=s-\alpha\leq 1$) to control (for $u\in E_N$ hence $(-\Delta u)^\theta u\in E_N$ for all $\theta$)
$$
\left\langle P_N u e^{\beta |u|^2} , (-\Delta)^{\alpha } u \right\rangle
=\left\langle u e^{\beta |u|^2} , (-\Delta)^{\alpha } u \right\rangle
\geq \beta\left\|e^{\beta |u|^2/2}  \right\|_{\dot H^{\alpha }}^2 ,
$$
and
\begin{multline*}
\left\langle\left[ (-\Delta)^{s-\alpha }+1\right]u , P_N u e^{\beta |u|^2}\right\rangle 
= 
\left\langle\left[ (-\Delta)^{s-\alpha }+1\right]u ,  u e^{\beta |u|^2}\right\rangle 
\\
\geq \left\langle (-\Delta)^{s-\alpha }u , u e^{\beta |u|^2}\right\rangle 
\geq
\beta\left\|e^{\beta |u|^2/2}\right\|^2_{\dot H^{s-\alpha}}.
\end{multline*}
Discarding these nonnegative terms in \eqref{eq:M_weak_dispersion}, we obtain the coercivity
\begin{equation}
\label{eq:E_coercive_weak_dispersion}
\mathcal{E} (u) 
\geq
2\beta\left\| P_N  ue^{\beta |u|^2}\right\|^2 + \|u\|_{ \dot H^s}^2.
\end{equation}
\begin{rmq}
We stress that in the C\'ordoba-C\'ordoba inequality it is crucial that the exponent $\gamma\leq 1$, and this is the only reason why we had to restrict to $s-\alpha\leq 1$ in this section.
For $\alpha+1<s\leq d/2$ we did not manage to control these two crossed terms, at least not for this choice of dissipation $\mathcal L$, and the functionals $\mathcal M,\mathcal E$ do not provide sufficient control on the nonlinearity for the subsequent analysis to carry over.
\end{rmq}
At this point, and just as before for highly regular $s>d/2$, one should establish the stochastic well-posedness of
\begin{equation}
\partial_t u =- i \left[(-\Delta)^{\alpha}  u + P_N \left(2 \beta   u e^{\beta |u|^2} \right)\right] -\sigma^2 \mathcal{L}(u)+\sigma \eta_N
\label{eqsto_weak}
\end{equation}
in the sense of Definition~\ref{def:stochastic_wellposedness}.
Just as in section~\ref{subsec:FD} and Proposition~\ref{prop:stochastic_wellposedness}, we construct the unique solution $u=v+z$, with $z$ still given by \eqref{eq:def_z} and containing all the Brownian fluctuations.
Similarly to \eqref{eqsto2}, the corrector $v$ should now solve, for any realization of $u_0=u_0^\omega$ and $z(t)=z^\omega(t)$, the deterministic problem
\begin{equation}
\label{eqsto2bis}
\begin{cases}
\frac {dv}{dt} = -i \left((-\Delta)^{\alpha } v + 2\beta(v+z) e^{\beta |v+z|^2}\right)\\
\\ \hspace{2cm} -\sigma^2 \left[P_N\left( (v+z) e^{\beta |v+z|^2}\right) +\big[(-\Delta)^{s-\alpha }+1\big] (v+z)\right],\\
v|_{t=0}=u_0\in E_N.
\end{cases}
\end{equation}
Standard Cauchy-Lipschitz arguments in the finite-dimensional space $E_N$ guarantee local well-posedness, the challenge here is rather proving that solutions are actually global-in-time.
We have
\begin{multline*}
\frac{d}{dt}\frac 12\|v\|^2
= \left\langle v ,  -2i\beta(v+z) e^{\beta |v+z|^2}\right\rangle
\\
\qquad - \sigma^2 \left[ \left\langle v , (v+z) e^{\beta |v+z|^2}\right\rangle
+\left\langle v, \big[(-\Delta)^{s-\alpha}+1\big]  (v+z)\right\rangle \right]
\end{multline*}
Using as before that $\left\langle u,iue^{\beta|u|^2}\right\rangle=0$ (with $u=v+z$) in order to substitute $-z$ for $v$ in the very first term, this gives
\begin{align*}
\frac{d}{dt}\frac 12\|v\|^2
& =
\left\langle -z ,  -2i\beta(v+z) e^{\beta |v+z|^2}\right\rangle
- \sigma^2 \left\langle (v+z)-z , (v+z) e^{\beta |v+z|^2}\right\rangle
\\
& \hspace{1.5cm} -\sigma^2 
 \left\langle v, \big[(-\Delta)^{s-\alpha}+1\big]  (v+z)\right\rangle 
\\
&\leq 
2\beta \int_M|z||v+z|e^{\beta|v+z|^2} -\sigma^2\int_M|v+z|^2e^{\beta|v+z|^2} +\sigma^2\int_M|z||v+z|e^{\beta|v+z|^2}
\\
& \hspace{1.5cm} +\sigma^2 
\Big(-\|v\|^2_{H ^{s-\alpha}}+ \|v\|_{ H ^{s-\alpha}}\|z\|_{ H ^{s-\alpha}}\Big)
\\
&\leq 
\int_M(2\beta+\sigma^2) |z|\times|v+z|e^{\beta|v+z|^2} -\sigma^2\int_M|v+z|^2e^{\beta|v+z|^2}
\\
& \hspace{1.5cm}+\frac {\sigma^2 }2
\Big(-\|v\|^2_{ H ^{s-\alpha}}+\|z\|^2_{ H ^{s-\alpha}}\Big)
\end{align*}
In order to fully exploit the coercivity from the second $-\sigma^2$ term in the right-hand side, let $\Phi_\sigma$ be the convex function given by Lemma~\ref{lem:Phi_generalized_Young} with $b=\beta$ and $c=\sigma^2$.
Applying the generalized Young inequality (with $\Phi^\ast_\sigma$ the Legendre transform of $\Phi_\sigma$), we estimate
\begin{multline*}
\int_M(2\beta+\sigma^2) |z|\times|v+z|e^{\beta|v+z|^2}
\leq
\int_M(2\beta+1) |z|\times|v+z|e^{\beta|v+z|^2}
\leq\\
\int_M\Phi_\sigma^*((2\beta+1) |z|) +
\int_M\Phi_\sigma\left(|v+z|e^{\beta|v+z|^2}\right)
\\
=
\int_M\Phi_\sigma^*((2\beta+1) |z|)
+\sigma^2 \int_M|v+z|^2e^{\beta|v+z|^2}.
\end{multline*}
Recalling from \eqref{eq:bound_z_realization} that $\sup\limits_{\tau\leq t}\|z^{\omega }(\tau)\|^2\leq \sigma^2 C_{\omega,t}$ for fixed $\omega\in\Omega$, we have $\|z^\omega(\tau)\|^2_{L^\infty(M)}\leq C_N \|z^\omega(t)\|^2\leq C_{N,\omega,t} \sigma^2$ for $\tau\leq t$ and we conclude that
\begin{align*}
\frac{d}{d\tau}\frac 12\|v\|^2
&\leq 
\left[\int_M\Phi_\sigma^*((2\beta+1) |z|)
+\sigma^2 \int_M|v+z|^2e^{\beta|v+z|^2}\right] -\sigma^2\int_M|v+z|^2e^{\beta|v+z|^2}
\\
& \hspace{1.5cm} +\frac {\sigma^2 }2
\Big(-\|v\|^2_{ H ^{s-\alpha}}+\|z\|^2_{ H ^{s-\alpha}}\Big)
\\
& \leq Vol(M) \Phi_\sigma^*(C_{\beta,\omega,N,t}\sigma)+\frac {\sigma^2 }2
\Big(-\|v\|^2_{ H ^{s-\alpha}}+\|z\|^2_{ H ^{s-\alpha}}\Big).
\end{align*}
Then either $\| v(t)\|^2_{H ^{s-\alpha}}(t)\leq \| z(t)\|^2_{H ^{s-\alpha}}$, or $\frac{d}{dt}\frac 12\| v\|^2 \leq  Vol(M) \Phi_\sigma^*(C_{\beta,\omega,N,t}\sigma)$.
Since $ z$ is bounded (for fixed $\omega$) both scenarios lead to some quantitative estimate of the form
\begin{equation}
\label{eq:bound_v_weak_dispersion}
\|v^\omega(t)\|^2\leq \|v^\omega(0)\|^2+\sup\limits_{\tau \leq t}\|z^\omega(t)\|^2 + Vol(M)\Phi^*_\sigma(C_{\beta,N,\omega,t}\sigma )t
\end{equation}
preventing blow-up in finite time, and we conclude that $v$ is indeed global-in-time for a.a. $\omega\in \Omega$.
Being the right-hand side of \eqref{eqsto2bis} a locally Lipschitz function of $v$, this a priori bound leads to continuity with respect to deterministic initial data, hence \eqref{eqsto2bis} is again stochastically globally well-posed in the sense of Definition~\ref{def:stochastic_wellposedness}.
\\

Exactly as in  the high regularity case, one can establish at this stage the equivalent of Proposition~\ref{consalphamass} and Proposition~\ref{consalphaener} (adapted to the new dissipation operator $\mathcal L(u)$ in \eqref{eq:def_L_weak_dispersion}).
The coercivity \eqref{eq:M_coercive_weak_dispersion}\eqref{eq:E_coercive_weak_dispersion} gives then enough compactness on the one-parameter family
$$
\lambda_t(\bullet)=\frac 1t\int_0^t \mathfrak{B}_{\sigma ,N}^{\tau\ast} \delta_0 (\bullet) d\tau,
\hspace{2cm}t\geq 1
$$
and just like in the proof of Theorem~\ref{thm:exists_mu_s_sigma_N} we conclude from a Bogoliubov-Krylov argument that 
\begin{center}
 There exists a stationary measure $\mu^s_{\sigma,N}$ for \eqref{eqsto_weak}.
\end{center}

The next step is to take the inviscid limit $\sigma\to 0$.
This can be done here as well, and it is easy to show as in Section~\ref{subsec:FD} that there exists $\mu^s_N\in \mathfrak p(L^2)$ such that, for a discrete sequence $\sigma_k\to 0$
$$
\lim\limits_{k\to+\infty} \mu^s_{\sigma_k ,N}=\mu_N^s
\qquad\mbox{weakly over }L^2.
$$
Note however that, in order to prove that $\mu^s_N$ is invariant for the deterministic flow $\phi_N^t$ in Proposition~\ref{propmainN}, we strongly exploited in \eqref{eq:ODE_wk} the fact that the bound \eqref{eq:a_priori_bound_v} was uniform in $\sigma$.
For low regularities our corresponding bound \eqref{eq:bound_v_weak_dispersion} is unfortunately \emph{not} uniform in $\sigma$, and in fact blows-up as $\sigma\to 0$ due to $\Phi^*_\sigma$ being the Legendre transform of $\Phi_\sigma=\sigma^2\Phi$ for a fixed $\Phi$ (and as a consequence $\Phi_\sigma^*(z)=\sigma^2\Phi^*(z/\sigma^2)$, which indeed blows-up because $\Phi^*$ grows super-exponentially at infinity but $z$ is only controlled as $\mathcal O(\sigma)$).

Fortunately, it suffices to prove in fact that $u_k=v_k+z_k$ in \eqref{eq:ODE_wk} remains  bounded uniformly in $\sigma_k\to 0$ for $\omega\in S_\rho$ only (and not \emph{all} $\omega\in\Omega$ as in the proof of Proposition~\ref{propmainN}).
The key idea for this is that, by definition \eqref{eq:def_Sr} of $S_\rho$, the martingale term is controlled uniformly in $\omega,\sigma$.
More precisely, let $u_k=u_k(t;P_Nu_0)$ be the solutions of the SDE \eqref{eqsto_weak} with dissipation $\mathcal L(u)$ given by \eqref{eq:def_L_weak_dispersion}.
As in the proof of Proposition~\ref{consalphamass}, $M=\frac 12\|u_k\|^2$ has It\^o differential
$$
dM=\sigma^2_k\left(\frac{A^0_N}{2}-\mathcal M(u_k)\right)dt+\sigma_k \sum_{|m|\leq N} a_m (u_k,e_m) d\beta_m.
$$
Because $\mathcal M\geq 0$, and since by definition \eqref{eq:def_Sr} of $S_\rho$ the martingale term is bounded by $\rho\sigma_k t$, we see that
$$
\frac{1}{2}\|u_k(\tau)\|^2=M(\tau)
\leq M(0)+\sigma_k^2\frac{A^0_N}{2}+\rho\sigma_k t
\leq \frac{R^2}2+\sigma_k^2\frac{A_{0}}{2}+\rho\sigma_k t
\leq C_{R,t,\rho}
$$
for all $\tau\leq t$, uniformly in $\sigma_k\to 0$ and $\omega\in S_\rho$.
The rest of the proof of Proposition~\ref{propmainN} remains identical, and we conclude as before that
\begin{center}
 $\mu^s_N=\lim\limits_{k\to\infty}\mu^s_{\sigma_k,N}$ is invariant for $\phi^t_N$.
\end{center}
Moreover, and omitting the details for the sake of exposition, the coercivity \eqref{eq:M_coercive_weak_dispersion}\eqref{eq:E_coercive_weak_dispersion} of $\mathcal M,\mathcal E$ give altogether
\begin{equation}
\label{boundE2}
\int_{L^2} \left(\|u\|^2_{\dot H^s} +\left\|P_N u e^{\beta |u|^2}\right\|^2 +\int_M|u|^2e^{\beta|u|^2}\right) \mu_N^s (du)\leq C
\end{equation}
for some $C>0$ independent of $N$.
Compared to the high regularity in section~\ref{sec:strong}, we lost quantitative estimates on the growth-rate of $H^r$ norms, but we gained in exchange $L^2$ control over the projected nonlinearity.
%
%
\subsection{Construction of global solutions}
The goal is now to pass to the limit $N\to\infty$, and this is where the Skorokhod-type argument \cite{albeverio1990global,BTT,da2002two} comes into play.
To this end, let $u_{0}$ be an $E_N$-valued random variable distributed according to $\mu^s_N\in \mathfrak p(L^2)$ just constructed, and let $u_N(t)=\phi^t_N(u_0)$.
As a first step we work in a finite time interval $t\in [-T,T]$ for some fixed large $T>0$.
One can canonically consider the processes $(u_N (t))_{t\in [-T,T]}$ as a $C([-T,T],H^{s-\alpha})$-valued random variable.
Let $\nu^s_{T,N}$ be its law.
By invariance of $\mu^s_N$ we have
\begin{equation}
\label{rellaw}
\mu^s_N =\left.\nu^s_{T,N} \right\vert_{t=t_0}
\qqtext{for all}
t_0 \in [-T,T].
\end{equation}
Consider the following functional spaces
$$
X_T^s = L^2 ([-T,T] ;H^{s}) \cap  H^1 \Big([-T,T]; \big(\dot H^{s-\alpha}+L^2)\big)\Big)
$$
and
$$
Y_T^{s}= C([-T,T] ;H^{s-\alpha}),
$$
and note that $X_T^s$ is compactly embedded into $Y_T^s$.
\begin{prop}
We have 
\begin{equation}
\int_{Y^s_T}\left( \|u\|_{X_T^s}^2+\left\|ue^{\beta|u|^2}\right\|_{L^1_tL^1_x}\right) \nu_{T,N}^s (du) \leq C T
\label{eq:estimate_nu_kNs}
    \end{equation}
for some constant $C$ independent of $N$.
\end{prop}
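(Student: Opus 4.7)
The plan is to reduce everything to single-time moments under the stationary measure $\mu^s_N$ via Fubini and the invariance relation \eqref{rellaw}, and then to invoke the uniform bound \eqref{boundE2} term by term. More precisely, for any nonnegative Borel functional $F$ on an appropriate space, invariance gives
\[
\int_{Y_T^s}\!\!\int_{-T}^T F(u(t))\,dt\,\nu^s_{T,N}(du)=\mathbb E\!\int_{-T}^T\! F(u_N(t))\,dt=\int_{-T}^T\! \mathbb E_{\mu^s_N}F(u)\,dt=2T\,\mathbb E_{\mu^s_N} F(u),
\]
so the task boils down to checking that a handful of one-time $\mu^s_N$-moments are controlled by constants independent of $N$ (using $A^0_N\leq A^0<\infty$).

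First I would treat the $L^2_tH^s$ component of $\|u\|_{X_T^s}$. Writing $\|u\|_{H^s}^2=\|u\|_{\dot H^s}^2+\|u\|^2$, the homogeneous part is directly bounded by \eqref{boundE2}, while for the $L^2$ part I would use the trivial inequality $|u|^2\leq |u|^2e^{\beta|u|^2}$ to obtain $\|u\|^2\leq \int_M|u|^2e^{\beta|u|^2}$, again controlled by \eqref{boundE2}. For the nonlinearity term $\|ue^{\beta|u|^2}\|_{L^1_tL^1_x}$, the key pointwise estimate is $|u|\leq 1+|u|^2$, which combined with $e^{\beta|u|^2}\leq e^\beta+|u|^2e^{\beta|u|^2}$ (splitting according to $|u|\lessgtr 1$) yields
\[
\int_M|u|e^{\beta|u|^2}\,dx\ \leq\ C_M+2\int_M|u|^2e^{\beta|u|^2}\,dx,
\]
whose $\mu^s_N$-expectation is bounded by \eqref{boundE2}.

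Next I would turn to the $H^1_t(\dot H^{s-\alpha}+L^2)$ component, which is where we must exploit the equation satisfied by $u_N$. Since $u_N=\phi^t_N(u_0)$ solves \eqref{eqproj}, we have
\[
\partial_tu_N=-i(-\Delta)^\alpha u_N-2i\beta P_N\!\left(u_Ne^{\beta|u_N|^2}\right),
\]
and picking the natural splitting of $\partial_tu_N$ in $\dot H^{s-\alpha}+L^2$ gives
\[
\|\partial_tu_N\|_{\dot H^{s-\alpha}+L^2}\leq \|u_N\|_{\dot H^s}+2\beta\|P_N(u_Ne^{\beta|u_N|^2})\|_{L^2},
\]
because $(-\Delta)^\alpha$ is an isometry from $\dot H^s$ to $\dot H^{s-\alpha}$. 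Squaring and integrating in time, then applying Fubini and invariance once more, both resulting $\mu^s_N$-moments are bounded by \eqref{boundE2}. The $L^2_t(\dot H^{s-\alpha}+L^2)$ piece of the $H^1_t$-norm is dominated by the already estimated $L^2_tH^s$-norm since $H^s\hookrightarrow L^2\hookrightarrow \dot H^{s-\alpha}+L^2$ continuously. Summing the three contributions yields the desired $CT$ bound with $C$ independent of $N$.

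There is no real obstacle here; the only mild subtlety is to correctly decompose $\partial_tu_N$ so that the $\dot H^{s-\alpha}+L^2$ norm matches exactly the quantities controlled by \eqref{boundE2} — in particular to observe that $P_N(ue^{\beta|u|^2})$ rather than $ue^{\beta|u|^2}$ appears in the $L^2$-piece, which is precisely the projected nonlinearity appearing in both the SPDE and in the coercivity estimate \eqref{eq:E_coercive_weak_dispersion}.
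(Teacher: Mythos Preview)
Your proof is correct and, for the $X_T^s$ part, follows essentially the same route as the paper: reduce to single-time $\mu_N^s$-moments via Fubini and the invariance relation \eqref{rellaw}, decompose $\partial_t u_N$ using the equation \eqref{eqproj} so that the $\dot H^{s-\alpha}$ piece is $(-\Delta)^\alpha u_N$ and the $L^2$ piece is the projected nonlinearity, and then appeal to \eqref{boundE2}.

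Where you differ from the paper is in the control of $\left\|ue^{\beta|u|^2}\right\|_{L^1_tL^1_x}$. The paper first reduces by invariance to $2T\,\E_{\mu_N^s}\int_M|u|e^{\beta|u|^2}$ and then invokes Jensen's inequality with the convex function $\Phi$ of Lemma~\ref{lem:Phi_generalized_Young} (satisfying $\Phi(ue^{\beta|u|^2})=u^2e^{\beta|u|^2}$) to pull the expectation inside and land on the quantity already bounded in \eqref{boundE2}. Your route is more elementary: the pointwise bound $|u|e^{\beta|u|^2}\leq e^\beta+|u|^2e^{\beta|u|^2}$ (split on $|u|\lessgtr 1$) gives the same conclusion directly, without any convexity machinery. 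Both arguments are valid; yours is shorter here, while the paper's use of $\Phi$ is consistent with its earlier appearance in the well-posedness estimate \eqref{eq:bound_v_weak_dispersion}, so reusing it costs nothing. Your observation that $\|u\|^2\leq\int_M|u|^2e^{\beta|u|^2}$ handles the $L^2$ part of the $H^s$ norm is likewise a minor simplification over the paper, which just writes $\|u\|_{H^s}^2$ and implicitly bounds the full norm.
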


\begin{proof}
From $-i\partial_t u_N=(-\Delta)^\alpha u_N +2\beta P_N u_Ne^{\beta|u_N|^2}$ we have obviously
$$
\|\partial_t u_N\|^2_{\dot H^{s-\alpha}+L^2}
\leq \|(-\Delta)^\alpha u_N\|^2_{\dot H^{s-\alpha}} + \left\|2\beta P_N u_Ne^{\beta|u_N|^2}\right\|^2_{L^2},
$$
hence by \eqref{rellaw} and the invariance of $\mu^s_N$
\begin{multline*}
 \int_{Y^s_T} \|u\|_{X_T}^2  \nu_{T,N}^s (du) =
 \int_{-T}^T \int_{L^2}\left(\|u_N(t)\|^2_{H^s}+\|\partial_t u_N(t)\|^2_{\dot H^{s-\alpha}+L^2}\right)\nu^{s}_{T,N}\big|_{t}(du) dt
 \\
 \leq
 \int_{-T}^T \int_{L^2}\left(\|u_N(t)\|^2_{H^s}+\|(-\Delta)^\alpha u_N(t)\|^2_{\dot H^{s-\alpha}} + 4\beta^2\left\|P_N u_N(t)e^{\beta|u_N(t)|^2}\right\|^2_{H^s}\right)\nu^{s}_{T,N}\big|_{t}(du) dt
 \\
 =
 \int_{-T}^T \int_{L^2}\left(\|u\|^2_{H^s}+\|(-\Delta)^\alpha u\|^2_{\dot H^{s-\alpha}} + 4\beta^2\left\|P_N ue^{\beta|u(t)|^2}\right\|^2_{H^s}\right)\mu^{s}_{N}(du) dt
 \\
 \leq (1+4\beta^2)2T\int_{L^2}\left(\|u\|^2_{H^s}+\|u\|^2_{\dot H^{s}} + \left\|P_N ue^{\beta|u|^2}\right\|^2_{L^2}\right)\mu^{s}_{N}(du).
\end{multline*}
By \eqref{boundE2} this latter integral is bounded uniformly in $N$ and the first part of our estimate follows.

In order to control now the $L^1_tL^1_x$ norm of the nonlinearity, we first argue just as before to write
$$
\int_{Y^s_T} \left\|ue^{\beta|u|^2}\right\|_{L^1_tL^1_x} \nu_{T,N}^s (du)
=
2T\int_{L^2}\int_M\left|ue^{\beta|u|^2}\right|dx\,\mu_{N}^s(du).
$$
Let now $\Phi(z)$ be the monotone increasing, convex function from Lemma~\ref{lem:Phi_generalized_Young} with $b=\beta$ and $c=1$, defined implicitly as $\Phi(ue^{\beta|u|^2})=u^2e^{\beta|u|^2}$.
By Jensen's inequality and \eqref{boundE2} we have
\begin{multline*}
\Phi\left(\int_{L^2}\int_M\left|ue^{\beta|u|^2}\right|\frac{dx}{Vol(M)}\,\mu_{N}^s(du)\right)
\leq \int_{L^2}\int_M\Phi\left(ue^{\beta|u|^2}\right)\frac{dx}{Vol(M)}\,\mu_{N}^s(du)
\\
=\int_{L^2}\int_M|u|^2e^{\beta|u|^2}\frac{dx}{Vol(M)}\,\mu_{N}^s(du)
\leq C
\end{multline*}
uniformly in $N$, hence
\begin{equation}
\int_{Y^s_T} \left\|ue^{\beta|u|^2}\right\|_{L^1_TL^1_x} \nu_{T,N}^s (du)\leq 2T\, Vol(M)\Phi^{-1}(C)
\label{eq:estimate_ue_beta_u2_N}
\end{equation}
and the proof is complete
\end{proof}
Chebyshev's inequality in \eqref{eq:estimate_nu_kNs} gives $\nu^s_{T,N}(B_R(X^s_T)^c)\leq \frac{C}{R^2}$ uniformly in $N$, hence by the Prokhorov theorem there exists a weak limit
$$
\nu^{s}_T=\lim\limits_{N\to\infty}\nu^{s}_{T,N}
\qqtext{in}
\mathfrak p(Y_T^s)
$$
(up to a subsequence $N_k\to\infty$, not relabeled here.)
By lower semicontinuity in \eqref{eq:estimate_nu_kNs} we have moreover
\begin{equation}
\label{eq:control_u_nu_Ts}
\int_{Y^s_T}\left( \|u\|_{X_T^s}^2+\left\|ue^{\beta|u|^2}\right\|_{L^1_tL^1_x}\right) \nu_T^s (du) \leq C T
\end{equation}
By Skorokhod's representation theorem, there exists a $Y_T$-valued stochastic process $u$ distributed according to $\nu^{s}_T\in\mathfrak p(Y_T^s)$ and such that 
$$
u_N\to u\qtext{ in }Y_T^{s}=C([-T,T];H^{s-\alpha}), \qquad \mathbb{P}-\mbox{almost surely}.
$$
Next, we want to show that $u$ is a distributional solution of \eqref{eq} for $\mathbb P$-a.a. $\omega$.
By construction $u_N$ is a global distributional solution of \eqref{eqproj}, and the above almost sure convergence in $C([0,T];H^{s-\alpha})$ implies the distributional convergence of the linear terms $i\partial_t u_N-(-\Delta)^\alpha u_N$.
Hence it clearly suffices to prove that the nonlinearity also passes to the limit in the sense of distributions,
$$
F_N(u_N)=P_N\left(u_Ne^{\beta|u_N|^2}\right) \xto{\mathcal D'_{tx}} ue^{\beta|u|^2}=F(u),
\hspace{1.5cm}
\mathbb P-\mbox{almost surely}.
$$
To this end we decompose, for some arbitrarily large $M\in \N$,
\begin{multline}
\label{convbad}
F_N (u_N)-F(u)
= [F_N (u_N) - F(u_N)]+ [F(u_N) - F_M (u_N)]
\\+[F_M (u_N) - F_M (u)] +[F_M (u) - F(u)].
\end{multline}
By the almost sure convergence of $u_N$ to $u$ and the continuity of $F_M$, we see that the third term of the right-hand side converges almost surely to $0$ at least in $C ([-T,T] ; H^{s-\alpha})$.
Let us now deal with the first term $F_N(u_N)-F_N(u)$, and fix some large $r>0$.
Write for simplicity
$$
v_N=u_Ne^{\beta|u_N|^2}
\qqtext{and}
v=ue^{\beta|u|^2},
$$
so that $F_N(u_N)=P_N v_N$ and $F(u_N)=v_N$.
By \eqref{eq:estimate_ue_beta_u2_N} we have, for almost-all $\omega\in \Omega$, that
$$
\|v_N\|_{L^1_tL^1_x}\leq C_\omega
$$
uniformly in $N$.
If the Sobolev exponent $-r<0$ is sufficiently negative, then there exists $\eps=\eps(r,d)>0$ such that $\|(1-P_N)f\|_{H^{-r}}\leq CN^{-\eps}\|f\|_{L^1}$ for all $f\in L^1(M)$.
As a consequence the first term in \eqref{convbad} can be controlled as
\begin{multline*}
 \left\|F_N (u_N) - F(u_N)\right\|_{L^1_tH^{-r}_x}
 =\left\|(1-P_N)v_N\right\|_{L^1_tH^{-r}_x}
 \leq N^{-\eps}\left\|v_N\right\|_{L^1_tL^1_x}
 \leq N^{-\eps}C_{\omega},
\end{multline*}
and in particular $F_N(u_N)-F(u_N)\to 0$ in the distributional sense.
The remaining terms $F(u_N)-F_M(u_N)$ and $F_M(u)-F(u)$ in \eqref{convbad} can be treated similarly, leveraging now a $M^{-\eps}$ decay rate for arbitrary $M$ (here one should also exploit from \eqref{eq:control_u_nu_Ts} the information that $\|ue^{\beta|u|^2}\|_{L^1_tL^1_x}\leq C_\omega<\infty$ for a.a. $\omega$).
This shows as desired that $u$ is a distributional solution on $[-T,T]$.
\\

In order to conclude the proof of Theorem~\ref{thmweak} in the case $s\leq 1+\alpha$, observe that $u=\lim u_N$ is a solution in the fixed, arbitrarily large time-interval $[-T,T]$.
By a straightforward diagonal extraction argument, it is easy to recover as $T\to\infty$ a limiting $C(\R;H^{s-\alpha})$-valued stochastic process, still denoted $u=\lim u_N$, that solves \eqref{eq} globally in time.
Denoting $\nu^s$ its law and passing to the limit in \eqref{rellaw}, we see that the law $\nu^s\big|_{t=t_0}$ of $u(t_0)$ at an arbitrary time $t_0\in\R$ is given by $\mu^s=\lim \mu^s_N$.
This means exactly that $\mu^s$ is invariant.
Finally, the first term in \eqref{boundE2} immediately passes to the limit as $\int _{L^2}\|u\|^2_{H^s}\mu^s(du)\leq \liminf \int _{L^2}\|u\|^2_{H^s}\mu^s_N(du)\leq C<\infty$.

For the requirement that $\mu^{s}$ contains large initial data in Theorem~\ref{thmweak}, we argue exactly as in Section~\ref{sec:variations}: we first retrieve $\int_{L^2} \mathcal E(u)\mu^s_N(du)=\frac{A^0_N}{2}$ as in Proposition~\ref{propmainN}, and then pass to the limit $N\to\infty$ as in Proposition~\ref{prop:N_to_infty}to retrieve
$$
\int_{L^2}\|u\|^2_{\dot H^s}\mu^s(du)\leq \int_{L^2} \mathcal E(u)\mu^s(du)=\frac{A_{0}}{2}=\frac{1}{2}\sum_{m\in\Z} |a_m|^2.
$$
Scaling the Brownian coefficients $a_m\rightarrow na_m$ for $n\in\N$ gives a corresponding measure $\mu^{s,n}\in\mathfrak p(L^2)$ charging data of size $\|u\|_{\dot H^s}\approx n$.
The exact same construction \eqref{eq:def_mustar_Sigmastar} as before ultimately gives an invariant measure $\mu^{s}=\sum_n\frac{\mu^{s,n}}{2^n}$ charging arbitrarily large data, in particular $\mu^{s}$ is not trivial and the proof of Theorem~\ref{thmweak} is finally complete for $s\leq \alpha+1$.


\begin{appendices}

\section{Some technical results}

\begin{lem}
Let $s>d/2$.
There exists a constant $C=C(s,d)$ such that
$$
\left\|e^{\beta |u|^2}\right\|_{H^s} \leq e^{C\beta \|u\|_{H^s}^2}
\qqtext{and}
\left\|ue^{\beta|u|^2}\right\|_{H^s} \leq C \|u\|_{H^s} e^{C \beta\|u\|_{H^s}^2}
$$
and
$$
\left\|ue^{\beta|u|^2}-ve^{\beta|v|^2}\right\|_{H^s}
\leq Ce^{C\beta\left(\|u\|^2_{H^s}+\|v\|^2_{H^s}\right)} \|u-v\|_{H^s}.
$$
\label{lemsob}
\end{lem}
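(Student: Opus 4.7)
The plan is to exploit two standard facts: first, that for $s>d/2$ the Sobolev space $H^s(M)$ is a Banach algebra, i.e.\ there exists $C_0=C_0(s,d)$ with $\|fg\|_{H^s}\leq C_0\|f\|_{H^s}\|g\|_{H^s}$; and second, that $H^s\hookrightarrow L^\infty$ continuously. From the algebra property one gets by induction $\||u|^{2k}\|_{H^s}\leq C_0^{2k-1}\|u\|_{H^s}^{2k}$ (treating $|u|^2=u\bar u$ as a product). Both statements continue to hold on a compact Riemannian manifold $M$ without boundary, and are the workhorse for everything below.

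For the first estimate, I would Taylor-expand
\[
e^{\beta|u|^2}=\sum_{k\geq 0}\frac{\beta^k}{k!}|u|^{2k},
\]
use the algebra estimate inside each term, and sum:
\[
\left\|e^{\beta|u|^2}\right\|_{H^s}
\leq 1+\sum_{k\geq 1}\frac{\beta^k}{k!}C_0^{2k-1}\|u\|_{H^s}^{2k}
\leq \exp\!\bigl(C\beta\|u\|_{H^s}^2\bigr)
\]
for a suitable $C=C(s,d)$ (absorbing $C_0$ into the exponent by a harmless enlargement, using that $1+\sum x^k/k!=e^x$ and that $C_0$-style constants can be pulled into $C$ by comparing $\sum (C_0^2 x)^k/k!$ with $e^{Cx}$). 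The second estimate follows from this one and one more application of the algebra property: $\|ue^{\beta|u|^2}\|_{H^s}\leq C_0\|u\|_{H^s}\|e^{\beta|u|^2}\|_{H^s}$, and then invoke the first bound.

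The last estimate is the delicate one; I would handle it by the classical telescoping trick. Split
\[
ue^{\beta|u|^2}-ve^{\beta|v|^2}=(u-v)e^{\beta|u|^2}+v\bigl(e^{\beta|u|^2}-e^{\beta|v|^2}\bigr),
\]
and control the first term by the algebra property together with the first estimate, yielding
$\|(u-v)e^{\beta|u|^2}\|_{H^s}\leq C\|u-v\|_{H^s}e^{C\beta\|u\|_{H^s}^2}$. For the second term, write the difference as
\[
e^{\beta|u|^2}-e^{\beta|v|^2}=\beta\int_0^1(|u|^2-|v|^2)\,e^{\beta(t|u|^2+(1-t)|v|^2)}\,dt,
\]
and observe that $|u|^2-|v|^2=(u-v)\bar u+v\,\overline{(u-v)}$, so the integrand is a product of $(u-v)$ with factors built from $u,v$ and the exponentials $e^{\beta t|u|^2},e^{\beta(1-t)|v|^2}$. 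Applying the algebra property and the first estimate (together with $t|u|^2+(1-t)|v|^2\leq |u|^2+|v|^2$, which gives $\|e^{\beta(t|u|^2+(1-t)|v|^2)}\|_{H^s}\lesssim e^{C\beta(\|u\|_{H^s}^2+\|v\|_{H^s}^2)}$ after another Taylor expansion step), and integrating in $t$, one recovers the stated bound.

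The main obstacle is the bookkeeping of constants in the series/telescoping step: one needs to check that the combinatorial factor $C_0^{2k-1}$ coming from iterating the algebra estimate is still summable against $\beta^k/k!$, and that the mixed exponential $e^{\beta(t|u|^2+(1-t)|v|^2)}$ in the integral representation really admits an $H^s$ bound of the form $e^{C\beta(\|u\|_{H^s}^2+\|v\|_{H^s}^2)}$ uniformly in $t\in[0,1]$. Both are routine but must be tracked carefully; everything else is a direct consequence of the algebra structure of $H^s$ for $s>d/2$.
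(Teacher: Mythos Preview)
Your proof is correct, and for the first two estimates it coincides verbatim with the paper's argument: expand the exponential as a power series, iterate the $H^s$ algebra inequality $\|fg\|_{H^s}\leq C_0\|f\|_{H^s}\|g\|_{H^s}$ to bound $\||u|^{2k}\|_{H^s}$, and resum.

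For the Lipschitz estimate you take a genuinely different route. The paper works term by term in the Taylor series, factoring $|u|^{2k}u-|v|^{2k}v=(u-v)P_k(u,\bar u,v,\bar v)$ for an explicit homogeneous polynomial $P_k$ of degree $2k$ built by a recursion, bounding $\|P_k\|_{H^s}\leq C^k(\|u\|_{H^s}^2+\|v\|_{H^s}^2)^k$, and summing. You instead use the telescoping split $(u-v)e^{\beta|u|^2}+v(e^{\beta|u|^2}-e^{\beta|v|^2})$ followed by the integral mean-value formula for the difference of exponentials. Both are standard and lead to the same bound; your approach avoids the combinatorics of the $P_k$ recursion at the cost of a Bochner-integral step (harmless, since $t\mapsto e^{\beta(t|u|^2+(1-t)|v|^2)}$ is continuous into $H^s$) and the observation that $e^{\beta(t|u|^2+(1-t)|v|^2)}=e^{\beta t|u|^2}e^{\beta(1-t)|v|^2}$ so the first estimate applies to each factor separately. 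The residual polynomial prefactor $\beta\|v\|_{H^s}(\|u\|_{H^s}+\|v\|_{H^s})$ is absorbed into the exponential by enlarging $C$, exactly as you note.
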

\begin{proof}
First, let us recall that, for all $u,v\in H^s$, we have
\begin{equation}
\|uv\|_{H^s}
\leq C (\|u\|_{H^s} \|v\|_{L^\infty} +\|u\|_{L^\infty} \|v\|_{H^s})
\leq C \|u\|_{H^s} \|v\|_{H^s}
\label{eq:estimate_uv_Hs}
\end{equation}
for some $C=C(s)$.
Using the previous inequality repeatedly, we get
\begin{multline*}
\left\|e^{\beta|u|^2}\right\|_{H^s} =
\left\|\sum_{k\geq 0} \dfrac{\beta^k|u|^{2k}}{k!} \right\|_{H^s}
\leq
\sum_{k\geq 0} \dfrac{\beta^k}{k!} \left\||u|^{2k}\right\|_{H^s}
\\
\leq \sum_{k\geq 0} \dfrac{\beta^k}{k!} \left(C\|u\|_{H^s}\right)^{2k}
=e^{C\beta \|u\|_{H^s}^2},
\end{multline*}
and as a consequence we obtain as well
$$
\left\|ue^{\beta|u|^2}\right\|_{H^s}
\leq C\|u\|_{H^s}\left\|e^{\beta|u|^2}\right\|_{H^s}
\leq C\|u\|_{H^s}e^{C\beta\|u\|^2_{H^s}}.
$$
In order to estimate $ue^{\beta|u|^2}-ve^{\beta|v|^2}$ we first exploit the algebraic identity
$|u|^{2k}u-|v|^{2k}v=(u-v)P_k(u,\bar u,v,\bar v)$, where $P_k$ is a homogeneous polynomial of degree $2k$ defined recursively by $P_{k}=|u|^{2k}+|v|^{2k}+uv P_{k-1}$ and $P_0=1$.
It is not difficult to check that $\|P_k\|_{H^s}\leq C^k (\|u\|^2_{H^s}+\|v\|^2_{H^s})^k$, hence
\begin{multline*}
 \left\|ue^{\beta|u|^2}-ve^{\beta|v|^2}\right\|_{H^s}
 =
 \left\|\sum\limits_{k\geq 0}\frac{\beta^{k}}{k!}(|u|^{2k}u-|v|^{2k}v)\right\|_{H^s}
 \\
=\left\|\sum\limits_{k\geq 0}\frac{\beta^{k}}{k!}(u-v)P_k\right\|_{H^s}
 \leq \sum\limits_{k\geq 0}\frac{\beta^k}{k!}C\|(u-v)\|_{H^s}\|P_k\|_{H^s}
 \\
 \leq C\left\|u-v\right\|_{H^s}
 \sum\limits_{k\geq 0}\frac{(\beta C)^k}{k!}(\|u\|^{2}_{H^s}+\|v\|^{2}_{H^s})^k
 =C\left\|u-v\right\|_{H^s} e^{C\beta\left(\left\|u\right\|^2_{H^s}+\left\|v\right\|^2_{H^s}\right)}
\end{multline*}
and the proof is complete.
\end{proof}

\begin{lem}[C\'ordoba-C\'ordoba inequality {\cite{constantin2017remarks,cordoba2003pointwise}}]
Let $\Phi \in C^2 (\R ,\R)$ satisfying $\Phi (0)=0$, and fix $\gamma \in (0,1]$.
For any $f\in C^\infty (M,\R)$ there holds
$$
\Phi' (f)(x) (-\Delta)^\gamma f(x) \geq (-\Delta )^\gamma \Phi (f)(x),
\hspace{1.2cm}
\forall\,x\in M.
$$
\end{lem}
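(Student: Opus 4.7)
The plan is to combine the heat-semigroup (Bochner subordination) representation of $(-\Delta)^\gamma$ with Jensen's inequality and the tangent-plane bound for convex functions. I note at the outset that the inequality as stated requires $\Phi$ to be convex---it fails otherwise, as seen by taking $\Phi(u)=-u^2$---and I treat this as the tacit hypothesis, since the paper only applies the lemma to $\Phi(u)=e^{\beta u^2}$, which is convex.

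For $\gamma\in(0,1)$, I would begin from the Bochner formula
$$
(-\Delta)^\gamma g(x) \;=\; c_\gamma \int_0^\infty \frac{g(x)-e^{\tau\Delta}g(x)}{\tau^{1+\gamma}}\,d\tau,
\qquad c_\gamma \;=\; \tfrac{\gamma}{\Gamma(1-\gamma)} \;>\; 0,
$$
valid on the compact boundaryless manifold $M$ for smooth $g$, together with the Markov property: $e^{\tau\Delta}$ is a probabilistic average against the heat kernel $p_\tau(x,\cdot)$, which is a probability density precisely because $M$ is compact without boundary. Two pointwise inequalities then combine at each time $\tau$. First, the tangent-plane estimate for the convex $\Phi$, applied with $a=f(x)$ and $b=e^{\tau\Delta}f(x)$, gives
$$
\Phi\bigl(e^{\tau\Delta}f(x)\bigr) \;\geq\; \Phi(f(x)) + \Phi'(f(x))\bigl(e^{\tau\Delta}f(x)-f(x)\bigr).
$$
Second, Jensen's inequality applied to the probability density $p_\tau(x,\cdot)$ yields
$$
\Phi\bigl(e^{\tau\Delta}f(x)\bigr) \;\leq\; e^{\tau\Delta}[\Phi(f)](x).
$$
Chaining these and rearranging,
$$
\Phi(f(x)) - e^{\tau\Delta}[\Phi(f)](x) \;\leq\; \Phi'(f(x))\bigl(f(x) - e^{\tau\Delta}f(x)\bigr).
$$
Multiplying by the positive weight $c_\gamma\tau^{-1-\gamma}$ and integrating over $\tau\in(0,\infty)$ converts both sides into their fractional-Laplacian counterparts and delivers $(-\Delta)^\gamma\Phi(f)(x)\leq\Phi'(f(x))(-\Delta)^\gamma f(x)$.

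The endpoint $\gamma=1$ must be handled separately, since the subordination formula degenerates; the classical chain rule
$$
\Delta\Phi(f) \;=\; \Phi'(f)\Delta f + \Phi''(f)|\nabla f|^2 \;\geq\; \Phi'(f)\Delta f
$$
already gives the result upon negating both sides, again using $\Phi''\geq0$. The only genuine technical point is to justify the Bochner formula and the stochastic completeness of the heat kernel on a compact boundaryless $M$, which are classical. The main conceptual hurdle is not the analysis but rather recognizing that convexity of $\Phi$ is an indispensable tacit hypothesis throughout.
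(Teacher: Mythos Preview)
The paper does not actually prove this lemma; it is stated with citations to \cite{constantin2017remarks,cordoba2003pointwise} and used as a black box in the proof of the next lemma. So there is no ``paper's proof'' to compare against.

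Your argument is correct and is essentially the standard one (and in particular the one in \cite{constantin2017remarks} for the manifold setting): Bochner subordination reduces the fractional Laplacian to the heat semigroup, stochastic completeness of the heat kernel on a compact boundaryless manifold makes $e^{\tau\Delta}$ a Markov average, and then Jensen plus the tangent-line inequality for convex $\Phi$ do the work. The separate treatment of $\gamma=1$ via the chain rule is also the right way to close the endpoint. Your observation that convexity is a tacit hypothesis is spot-on and worth emphasizing; the statement as written is simply false without it. One small correction: the paper does not apply the lemma with $\Phi(u)=e^{\beta u^2}$ directly, but rather (in the proof of the subsequent lemma) with $\Phi(r)=r^2/2$ and then $\Phi(z)=e^z-1$, both of which are convex, so your point stands.
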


\begin{lem}
\label{cordocordo}
For $\gamma \in (0,1]$ and $f\in C^{\infty}(M,\mathbb C)$, we have
$$
\left\langle f e^{|f|^2}, (-\Delta)^\gamma f \right\rangle
\geq \left\| e^{|f|^2 /2}\right\|^2_{\dot H^\gamma}.
$$
\end{lem}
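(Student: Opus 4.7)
The strategy is to chain together two applications of the (real-valued, pointwise) Córdoba--Córdoba inequality stated just above, using that multiplication by any pointwise positive function preserves pointwise inequalities.

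First, I would reduce the complex statement to a real one by writing $f=u+iv$ with $u,v\in C^\infty(M,\R)$. Applying the Córdoba--Córdoba inequality with the convex function $\Phi(t)=t^2$ (which satisfies $\Phi(0)=0$) separately to $u$ and to $v$, I get $2u(-\Delta)^\gamma u\geq (-\Delta)^\gamma u^2$ and the analogous bound for $v$. Summing and using $u^2+v^2=|f|^2$ together with $2u(-\Delta)^\gamma u+2v(-\Delta)^\gamma v=2\operatorname{Re}\!\big[\bar f(-\Delta)^\gamma f\big]$, I obtain the pointwise inequality
\begin{equation*}
2\operatorname{Re}\!\big[\bar f(x)(-\Delta)^\gamma f(x)\big]\;\geq\;(-\Delta)^\gamma|f|^2(x),\qquad x\in M.
\end{equation*}

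Next, I would apply the Córdoba--Córdoba inequality a second time, now to the real-valued smooth function $|f|^2$, with the convex function $\Phi(t)=e^{t/2}-1$ (which again satisfies $\Phi(0)=0$ and whose derivative is $\Phi'(t)=\tfrac12 e^{t/2}$). This gives the pointwise bound
\begin{equation*}
\tfrac12\,e^{|f(x)|^2/2}\,(-\Delta)^\gamma|f|^2(x)\;\geq\;(-\Delta)^\gamma e^{|f|^2/2}(x).
\end{equation*}

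To conclude, I would chain the two bounds. Multiplying the first by the positive function $e^{|f|^2}$ and the second by $2e^{|f|^2/2}$ (both positive, so inequalities are preserved), I get pointwise
\begin{equation*}
2\operatorname{Re}\!\big[\bar f(-\Delta)^\gamma f\big]\,e^{|f|^2}\;\geq\;e^{|f|^2}(-\Delta)^\gamma|f|^2\;\geq\;2\,e^{|f|^2/2}(-\Delta)^\gamma e^{|f|^2/2}.
\end{equation*}
Integrating over $M$, dividing by $2$, using that $e^{|f|^2}$ is real to recognize the left-hand side as $\operatorname{Re}\int \overline{f\,e^{|f|^2}}\,(-\Delta)^\gamma f\,dx=\langle f\,e^{|f|^2},(-\Delta)^\gamma f\rangle$, and using on the right that $\int F\,(-\Delta)^\gamma F\,dx=\|F\|_{\dot H^\gamma}^2$ with $F=e^{|f|^2/2}$, the claim follows.

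There is no real obstacle here: the only conceptual point is that the pointwise Córdoba--Córdoba inequality is stated for real-valued inputs, so steps one and two handle the complex structure only by decomposing $f$ into $(u,v)$ and by feeding the real function $|f|^2$ (which is smooth since $f$ is) into $\Phi(t)=e^{t/2}-1$. The restriction $\gamma\in(0,1]$ is inherited from the Córdoba--Córdoba lemma itself, which is exactly the reason the whole low-regularity analysis requires $s-\alpha\le 1$.
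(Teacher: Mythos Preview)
Your proof is correct and follows essentially the same route as the paper: decompose $f$ into real and imaginary parts and apply the C\'ordoba--C\'ordoba inequality first with a quadratic $\Phi$ and then with an exponential $\Phi$, before multiplying by the positive weight $e^{|f|^2/2}$ and integrating. The only cosmetic difference is that the paper applies the second step to $g=|f|^2/2$ with $\Phi(z)=e^z-1$ (rather than to $|f|^2$ with $\Phi(t)=e^{t/2}-1$) and organizes the multiplications slightly differently, but the chain of pointwise inequalities is identical.
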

\begin{proof}
Let $f=a+ib$.
Applying twice the previous lemma with $\Phi(r)=r^2/2$, separately to the real-valued functions $a,b$, we have first
\begin{multline*}
\mathcal{R} \left[(a+ib)e^{(a^2 +b^2)/2} (-\Delta)^\gamma (a-ib) \right]
= e^{(a^2 +b^2)/2} (a (-\Delta)^\gamma a + b (-\Delta)^\gamma b)
\\
\geq e^{(a^2+b^2) /2} \left[(-\Delta )^\gamma (a^2/2)   + (-\Delta)^\gamma (b^2/2)\right]
= e^{(a^2+b^2) /2} (-\Delta )^\gamma \left(\frac{a^2+b^2}2\right).
\end{multline*}
Applying once again the C\'ordoba-C\'ordoba inequality to the real-valued function $g=(a^2 +b^2)/2$ with this time $\Phi(z)=e^z-1$, we find
\begin{multline*}
\mathcal{R} \left[(a+ib)e^{(a^2 +b^2)/2} (-\Delta)^\gamma (a-ib) \right]
\geq \Phi'(g)(-\Delta)^\gamma g\\
\geq (-\Delta)^{\gamma}\Phi(g)
=(-\Delta)^{\gamma}\left( e^{(a^2+b^2)/2}\right)
\end{multline*}
pointwise on $M$.
Finally multiplying by $h=e^{(a^2+b^2)/2}\geq 0$ and integrating by parts in the right-hand side $\int_M h(-\Delta)^\gamma h=\|(-\Delta)^{\gamma/2}h\|^2=\|h\|^2_{\dot H^\gamma}$ gives the result.
\end{proof}
\begin{lem}
\label{lem:Phi_generalized_Young}
 Let $b,c>0$, and set $f(u)=u e^{b u^2}$.
 Then the functional equation
 $$
 \Phi(f(u))=c uf(u)
 $$
 admits a unique $C^1$, strictly convex solution  $\Phi:\R^+\to \R^+$ given by
 $$
 \Phi(v)=c v f^{-1}(v).
 $$
\end{lem}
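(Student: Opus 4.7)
The plan is to parametrize everything through $u$, use the fact that $f$ is a diffeomorphism to get existence and uniqueness of $\Phi$, and then verify the regularity and convexity by explicit differentiation. I expect the convexity to be the only nontrivial point.

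\textbf{Step 1: $f$ is a bijection.} First I would observe that $f(0)=0$ and $f'(u)=e^{bu^{2}}(1+2bu^{2})>0$, so $f:\R^{+}\to\R^{+}$ is a $C^{\infty}$ strictly increasing bijection. In particular $f^{-1}:\R^{+}\to\R^{+}$ is well-defined, $C^{\infty}$ on $(0,\infty)$, and vanishes at $0$.

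\textbf{Step 2: Existence and uniqueness.} The functional equation $\Phi(f(u))=cuf(u)$ prescribes the value of $\Phi$ at every point $v\in\R^{+}$, since every such $v$ is uniquely of the form $v=f(u)$ with $u=f^{-1}(v)\geq 0$. Substituting, $\Phi(v)=cf^{-1}(v)\,v$, which is exactly the claimed formula; this simultaneously gives existence and uniqueness. Nonnegativity $\Phi\geq 0$ follows because $v\geq 0$ and $f^{-1}(v)\geq 0$.

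\textbf{Step 3: $C^{1}$ regularity.} On $(0,\infty)$, $\Phi$ is $C^{\infty}$ as a product/composition of smooth functions. The only issue is at $v=0$. Writing $u=f^{-1}(v)$ and differentiating $v=ue^{bu^{2}}$ one finds $u'(v)=\bigl[(1+2bu^{2})e^{bu^{2}}\bigr]^{-1}$, so a short calculation gives
\begin{equation*}
\Phi'(v)=c\bigl(u+vu'(v)\bigr)=\frac{2cu(1+bu^{2})}{1+2bu^{2}},\qquad u=f^{-1}(v).
\end{equation*}
Since $u\to 0$ as $v\to 0^{+}$, this extends continuously to $\Phi'(0)=0$, so $\Phi\in C^{1}(\R^{+})$.

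\textbf{Step 4: Strict convexity.} This is the main technical point. Rather than differentiate twice in $v$ (which involves $u''(v)$), I would use the parametric criterion: since $v(u)$ is strictly increasing, $\Phi'(v)$ is strictly increasing in $v$ iff $g(u):=\frac{2cu(1+bu^{2})}{1+2bu^{2}}$ is strictly increasing in $u$. A direct calculation of $g'(u)$ gives numerator
\begin{equation*}
(2c+6bcu^{2})(1+2bu^{2})-8bcu^{2}(1+bu^{2})=2c+2bcu^{2}+4b^{2}cu^{4}>0,
\end{equation*}
so $g$ is strictly increasing and $\Phi$ is therefore strictly convex on $\R^{+}$.

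The only point that is not essentially a rewriting of the defining relation is Step~4, and even there the obstruction is just whether the explicit polynomial in the numerator of $g'(u)$ has a sign; the computation above shows all its coefficients are positive, so there is no genuine difficulty.
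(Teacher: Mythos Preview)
Your proof is correct and follows essentially the same route as the paper: both argue that $f$ is a smooth bijection, read off $\Phi(v)=cvf^{-1}(v)$ from the change of variables, and then verify convexity by an explicit computation whose core is the positivity of the polynomial $2c+2bcu^{2}+4b^{2}cu^{4}$. The only cosmetic difference is that you check monotonicity of $\Phi'(v)=g(u)$ in the parameter $u$, whereas the paper writes out $\Phi''(v)=g'(u)/f'(u)$ directly; via the chain rule these are the same computation, and your extra care about $C^{1}$ regularity at $v=0$ is a detail the paper omits.
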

\begin{proof}
Observe that $f:\R^+\to\R^+$ is $C^\infty$, monotone increasing, bijective, and $f'(u)\geq 1$.
As a consequence $f^{-1}:\R^+\to\R^+$ is also $C^\infty$, monotone increasing, and bijective.
 Changing variables $v=f(u)\Leftrightarrow u =f^{-1}(v)$, the functional equation obviously reads
 $$
 \Phi(v)=cu f(u)=c f^{-1}(v)v.
 $$
 The only thing left to check is that $\Phi$ is convex.
 Differentiating twice $\Phi''(v)=c\frac{d^2}{dv^2}(f^{-1}(v)v)$, a straightforward but tedious computation leads ultimately to
 $$
 \Phi''(v)=\frac{c}{f'(u)}\times\frac{2+2b u^2+4b^2 u^2}{(1+2b u^2)^2}> 0
 $$
 with $u=f^{-1}(v)\geq 0$.
\end{proof}

\begin{lem}[{\cite[\S 5.1.1]{AGS}}]
\label{lem:integral_functional_LSC}
 Let $(X,\sfd)$ be a Polish space and $f:X\to \R\cup\{+\infty\}$ be a lower semicontinuous function bounded from below.
 Then the functional
 $$
 \lambda\in \mathfrak p(X)\mapsto \int_X f(x)\lambda(dx)
 $$
 is lower semi-continuous for the weak convergence of probability measures.
\end{lem}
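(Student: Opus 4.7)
The plan is to reduce the statement to the bounded continuous case, where the conclusion follows by definition of weak convergence, and then to pass to the limit via monotone convergence. Without loss of generality I would first translate $f$ by a constant and assume $f\geq 0$, which does not affect lower semi-continuity of either $f$ or the integral functional.

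The key step is to construct an increasing sequence of bounded continuous functions $f_n\uparrow f$ pointwise. For this I would use the classical Moreau-Yosida regularization
\[
f_n(x)=\inf_{y\in X}\Bigl\{\bigl(f(y)\wedge n\bigr)+n\,\sfd(x,y)\Bigr\},\qquad x\in X.
\]
Each $f_n$ is $n$-Lipschitz (as an infimum of $n$-Lipschitz functions), nonnegative, and bounded above by $n$, so in particular $f_n\in C_b(X)$. The sequence is clearly nondecreasing in $n$, and the lower semi-continuity of $f$ together with the Polish structure of $(X,\sfd)$ yields $\lim_n f_n(x)=f(x)$ for every $x$ (choose an almost minimizing sequence $y_k$ in the infimum, extract a convergent subsequence if needed, and use $\liminf f(y_k)\geq f(x)$).

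Once the $f_n$ are at hand, the rest is immediate. Fix a sequence $\lambda_k\rightharpoonup \lambda$ in $\mathfrak p(X)$. For each fixed $n$, by the very definition of weak convergence of probability measures,
\[
\int_X f_n\,d\lambda_k \xrightarrow[k\to\infty]{}\int_X f_n\,d\lambda.
\]
Combining this with the monotonicity $f_n\leq f$ we get
\[
\liminf_{k\to\infty}\int_X f\,d\lambda_k\ \geq\ \liminf_{k\to\infty}\int_X f_n\,d\lambda_k\ =\ \int_X f_n\,d\lambda.
\]
Letting $n\to\infty$ and applying the monotone convergence theorem to the nondecreasing sequence $f_n\uparrow f$ on the right-hand side yields $\liminf_{k}\int f\,d\lambda_k\geq\int f\,d\lambda$, which is the claimed lower semi-continuity.

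The only genuinely delicate point is the pointwise convergence $f_n(x)\uparrow f(x)$; everything else is routine. This is where both the lower semi-continuity of $f$ and the metric structure of $X$ enter, and I would spell it out carefully by splitting the infimum over $y$ according to whether $\sfd(x,y)$ is small or not, using lower semi-continuity on a small ball around $x$ and the penalization $n\,\sfd(x,y)$ to discard the contribution from far-away $y$'s.
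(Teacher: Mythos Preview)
The paper does not give a proof of this lemma at all; it is stated in the Appendix with a bare citation to \cite[\S 5.1.1]{AGS}. Your argument is the standard one and is correct: approximate $f$ from below by bounded Lipschitz functions via Moreau--Yosida regularization, use the definition of weak convergence on each $f_n$, and conclude by monotone convergence.

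One small remark on presentation: your parenthetical ``extract a convergent subsequence if needed'' when proving $f_n(x)\uparrow f(x)$ is slightly misleading, since a Polish space need not be locally compact and almost-minimizers $y_n$ have no a priori reason to lie in a compact set. But compactness is not needed here, and your final paragraph already contains the right fix: the penalization term forces any almost-minimizer $y_n$ to satisfy $n\,\sfd(x,y_n)\leq f_n(x)+\varepsilon\leq f(x)+\varepsilon$, hence $y_n\to x$ directly, and lower semi-continuity of $f$ then gives $\liminf_n f_n(x)\geq f(x)$. I would simply drop the phrase about extracting a subsequence and go straight to that argument.
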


\begin{lem}[Bogoliubov-Krylov, {\cite[Lemma $B.1$]{SY1}}]
\label{lem:BK}
 Let $(P_t)_{t\geq 0}$ be a Feller semi-group on a Banach space $X$ and denote by $P_t^\ast$ its adjoint. If there exists $t_n \rightarrow \infty$ and $\mu \in \mathfrak{p}(X)$ such that $\frac{1}{t_n}\int_0^{t_n} P_t^\ast \delta_0 dt \rightharpoonup  \mu$ in $X$ then $P_t^\ast \mu =\mu$ for all $t\geq 0$.
\end{lem}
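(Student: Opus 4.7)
The plan is to establish $P_s^\ast \mu = \mu$ via duality: it suffices to check that $\int_X f \, d(P_s^\ast \mu) = \int_X f \, d\mu$ for every $f\in C_b(X)$, which (by the definition of the adjoint) amounts to $\int_X P_s f \, d\mu = \int_X f \, d\mu$. I would introduce the shorthand $\mu_n := \frac{1}{t_n}\int_0^{t_n} P_t^\ast \delta_0 \, dt$, so that by assumption $\mu_n \rightharpoonup \mu$ in $\mathfrak{p}(X)$, and then try to verify the identity at the level of $\mu_n$ up to a vanishing error.

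The key computation is the following telescoping. For fixed $s\geq 0$ and $f\in C_b(X)$, by the semigroup property $P_{t+s}f = P_s(P_t f)$ and Fubini one has
$$
\int_X P_s f \, d\mu_n = \frac{1}{t_n}\int_0^{t_n} (P_{t+s} f)(0)\, dt = \frac{1}{t_n}\int_s^{t_n+s}(P_u f)(0)\, du,
$$
while $\int_X f \, d\mu_n = \frac{1}{t_n}\int_0^{t_n}(P_u f)(0)\, du$. Subtracting and cancelling the overlap gives
$$
\int_X P_s f \, d\mu_n - \int_X f \, d\mu_n = \frac{1}{t_n}\left(\int_{t_n}^{t_n+s}(P_u f)(0)\, du - \int_0^s (P_u f)(0)\, du\right).
$$
Because $P_u$ is a contraction on $C_b(X)$, i.e.\ $\|P_u f\|_\infty \leq \|f\|_\infty$, this boundary term is bounded in absolute value by $2s\|f\|_\infty/t_n$, which tends to $0$ as $n\to\infty$.

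It then remains to pass to the limit on both sides. The Feller hypothesis guarantees $P_s f \in C_b(X)$, so the weak convergence $\mu_n \rightharpoonup \mu$ yields both $\int_X f\, d\mu_n \to \int_X f\, d\mu$ and $\int_X P_s f\, d\mu_n \to \int_X P_s f\, d\mu$. Combined with the vanishing of the boundary error, this produces the desired equality $\int_X P_s f \, d\mu = \int_X f \, d\mu$ for every $f\in C_b(X)$ and thus $P_s^\ast \mu = \mu$ for every $s\geq 0$. I do not anticipate a serious obstacle here: the one subtle point is ensuring that $\mu_n$ is a well-defined probability measure (which requires interpreting the Bochner-type integral $\frac{1}{t_n}\int_0^{t_n} P_t^\ast \delta_0\, dt$ weakly, i.e.\ via $\int f\, d\mu_n := \frac{1}{t_n}\int_0^{t_n}(P_t f)(0)\, dt$ for $f\in C_b(X)$), but the Feller property makes $t\mapsto (P_t f)(0)$ measurable and bounded, so this causes no trouble.
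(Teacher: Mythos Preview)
Your argument is correct and is exactly the standard Bogoliubov--Krylov computation. Note that the paper does not actually prove this lemma: it is simply quoted from \cite[Lemma~B.1]{SY1} in the Appendix without proof, so there is no ``paper's own proof'' to compare against; your proposal fills that gap with the expected telescoping-plus-Feller argument.
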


\begin{lem}[\cite{gutev2020lipschitz,miculescu2000approximation}]
\label{lem:loc_lipschitz}
 Let $(X,d)$ be a compact metric space.
 Any $f\in C(X;\R)$ can be uniformly approximated by a sequence of globally Lipschitz functions.
\end{lem}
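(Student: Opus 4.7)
The plan is to produce an explicit approximating sequence via the classical inf-convolution (Moreau--Yosida regularization). For $n\in\N$ I would define
$$
f_n(x)=\inf_{y\in X}\Big\{f(y)+n\,d(x,y)\Big\},\qquad x\in X.
$$
Since $X$ is compact, $f$ is bounded (say $|f|\leq M$), and the infimum is attained. The first step is to verify that each $f_n$ is $n$-Lipschitz: for any $x_1,x_2,y\in X$, the triangle inequality gives $f(y)+n\,d(x_1,y)\leq f(y)+n\,d(x_2,y)+n\,d(x_1,x_2)$, so taking the infimum over $y$ yields $f_n(x_1)\leq f_n(x_2)+n\,d(x_1,x_2)$, and the reverse inequality follows by symmetry.

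Second, taking $y=x$ in the definition immediately gives $f_n\leq f$ pointwise, and monotonicity in $n$ is obvious, so the sequence is nondecreasing and bounded above by $f$. The nontrivial step is to establish uniform convergence $f_n\to f$. Fix $\varepsilon>0$; by compactness $f$ is uniformly continuous, so there exists $\delta>0$ with $|f(x)-f(y)|<\varepsilon$ whenever $d(x,y)<\delta$. Splitting the inf-convolution according to whether $d(x,y)<\delta$ or $d(x,y)\geq\delta$, the contribution from far-away $y$ is bounded below by $-M+n\delta$, which exceeds $f(x)$ as soon as $n\geq 2M/\delta$. Consequently, for all large $n$, the infimum in the definition of $f_n(x)$ is effectively realized in the $\delta$-ball around $x$, giving $f_n(x)\geq f(x)-\varepsilon$. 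Combined with $f_n\leq f$, this yields $\|f_n-f\|_\infty\leq\varepsilon$ for all $n$ sufficiently large.

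The main obstacle, if any, is really just organizing the uniform-continuity split cleanly; there is no genuine difficulty, as the argument is standard and self-contained. The result then follows immediately, yielding a sequence $(f_n)\subset \Lip(X)$ with $f_n\to f$ uniformly on $X$.
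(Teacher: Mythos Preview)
Your argument is correct: the inf-convolution (Moreau--Yosida) construction produces an $n$-Lipschitz function for each $n$, and the uniform-continuity split you describe gives uniform convergence on the compact space $X$. The only minor cosmetic point is that the ``infimum is attained'' remark is not actually needed anywhere in the argument; everything goes through with the raw infimum.

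As for comparison with the paper: the paper does not give its own proof of this lemma at all, but simply quotes it from the literature (the citations \cite{gutev2020lipschitz,miculescu2000approximation}). Your self-contained inf-convolution proof is the standard elementary argument and is entirely adequate here; it has the advantage of being explicit and short, whereas the cited references treat the question in greater generality than is needed for the paper's purposes.
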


\begin{lem}[modified Arzel\`a-Ascoli]
\label{lem:pointwise_to_uniform_on_compacts}
 Let $(X,\sfd)$ be a Polish space, $K\subset X$ a compact set, and $f_n:K\subset X\to X$ an equicontinuous sequence of functions converging pointwise
 $$
 f_n(x)\xto{n\to\infty} f(x)\qquad\mbox{for all }x\in K.
 $$
 Then
 $$
 \sup\limits_{x\in K}\sfd(f_n( x),f(x))\xto{n\to\infty} 0.
 $$
\end{lem}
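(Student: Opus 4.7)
The plan is to prove this by the classical $\varepsilon/3$-trick, exploiting the compactness of $K$ to upgrade the equicontinuity of $\{f_n\}$ to \emph{uniform} equicontinuity, and then covering $K$ by finitely many small balls on whose centers pointwise convergence is available.

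First, I would observe that on the compact set $K$ the equicontinuous family $\{f_n\}$ is automatically uniformly equicontinuous: given $\varepsilon>0$, a standard argument (for each $x\in K$ pick $\delta_x>0$ from the definition of equicontinuity at $x$, cover $K$ by finitely many balls $B(x_i,\delta_{x_i}/2)$ by compactness, and take $\delta=\min_i\delta_{x_i}/2$) yields a single $\delta>0$ with the property
\[
\sfd(y,y')<\delta \ \Longrightarrow\ \sfd(f_n(y),f_n(y'))<\varepsilon/3 \qquad\text{for every }n\in\N\text{ and every }y,y'\in K.
\]
Since this inequality is stable under the pointwise limit $n\to\infty$, the same $\delta$ also gives $\sfd(f(y),f(y'))\leq \varepsilon/3$, so in particular $f$ is continuous on $K$.

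Next, I would cover $K$ by finitely many balls $B(x_1,\delta),\dots,B(x_N,\delta)$ using compactness. By pointwise convergence $f_n(x_i)\to f(x_i)$ at each of the $N$ centers, there exists an index $n_0=n_0(\varepsilon,N)$ such that $\sfd(f_n(x_i),f(x_i))<\varepsilon/3$ for all $n\geq n_0$ and all $i=1,\dots,N$. For an arbitrary $x\in K$, pick any index $i$ with $\sfd(x,x_i)<\delta$; the triangle inequality combined with the two estimates above gives
\[
\sfd(f_n(x),f(x))\leq \sfd(f_n(x),f_n(x_i))+\sfd(f_n(x_i),f(x_i))+\sfd(f(x_i),f(x))<\varepsilon
\]
for every $n\geq n_0$. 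Taking the supremum over $x\in K$ and letting $\varepsilon\to 0^+$ concludes the proof.

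There is no real obstacle here beyond the standard $\varepsilon/3$-bookkeeping; the one point that deserves attention is the passage from pointwise equicontinuity (as used in the statement) to \emph{uniform} equicontinuity on $K$, which is precisely where compactness of $K$ is essential. All other ingredients — finite subcover, pointwise convergence at finitely many centers, and the triangle inequality — are entirely elementary and go through verbatim in any Polish (indeed metric) target.
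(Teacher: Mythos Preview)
Your proof is correct and follows essentially the same classical $\varepsilon/3$ argument as the paper's own proof: cover $K$ by finitely many small balls using equicontinuity, use pointwise convergence at the centers, and conclude via the triangle inequality. The only difference is cosmetic---the paper bundles the $f_n$- and $f$-oscillation into a single $\varepsilon/2$ term rather than two $\varepsilon/3$ terms, and does not spell out the passage from pointwise to uniform equicontinuity as carefully as you do.
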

\begin{proof}
 Fix $\eps>0$.
 By equicontinuity there is $\delta>0$ such that $\sfd(f_n(x),f_n(y))+\sfd(f(x),f(y))\leq \eps/2$ as soon as $\sfd(x,y)\leq \delta$.
 Take a finite cover $K\subset \cup_i B(x_i,\delta)$ of radius $\delta$.
 For $x\in K$ we have by triangle inequality
 \begin{multline*}
 \sfd(f_n(x),f(x))\leq 
 \sfd(f_n(x),f_n(x_i))+\sfd(f(x),f(x_i)) + \sfd(f_n(x_i),f(x_i))
 \\
 \leq \eps/2 + \sfd(f_n(x_i),f(x_i))
 \end{multline*}
 for some suitable $i$ such that $x\in B(x_i,\delta)$, and the claim follows.
\end{proof}

\end{appendices}

\bibliographystyle{alpha}
\bibliography{bibFLMT}

\end{document}